\newif\ifeptcs\eptcstrue
\newif\ifcref\creftrue
\def\definetac{\newif\iftac}    % Can't define a \newif inside another \if!
\def\definebeamer{\newif\ifbeamer}
\def\definecref{\newif\ifcref}
\else\usepackage{amsthm}\fi
  \definecolor{darkgreen}{rgb}{0,0.45,0} 
\else\usepackage[pagebackref,colorlinks,citecolor=darkgreen,linkcolor=darkgreen]{hyperref}
  \renewcommand*{\backref}[1]{}
  \renewcommand*{\backrefalt}[4]{({%
      \ifcase #1 Not cited.%
            \or Cited on p.~#2%
            \else Cited on pp.~#2%
      \fi%
    })}\fi\fi
\ifcref\usepackage[nosort]{cleveref}\usepackage{aliascnt}\fi
\let\ea\expandafter
\def\mdef#1#2{\ea\ea\ea\gdef\ea\ea\noexpand#1\ea{\ea\ensuremath\ea{#2}\xspace}}
\def\alwaysmath#1{\ea\ea\ea\global\ea\ea\ea\let\ea\ea\csname your@#1\endcsname\csname #1\endcsname
  \ea\def\csname #1\endcsname{\ensuremath{\csname your@#1\endcsname}\xspace}}
\def\foreachletter#1#2#3{\foreachcount=#1
  \ea\loop\ea\ea\ea#3\@alph\foreachcount
  \advance\foreachcount by 1
  \ifnum\foreachcount<#2\repeat}
\def\foreachLetter#1#2#3{\foreachcount=#1
  \ea\loop\ea\ea\ea#3\@Alph\foreachcount
  \advance\foreachcount by 1
  \ifnum\foreachcount<#2\repeat}
\def\definescr#1{\ea\gdef\csname s#1\endcsname{\ensuremath{\mathscr{#1}}\xspace}}
\def\definecal#1{\ea\gdef\csname c#1\endcsname{\ensuremath{\mathcal{#1}}\xspace}}
\def\definebold#1{\ea\gdef\csname b#1\endcsname{\ensuremath{\mathbf{#1}}\xspace}}
\def\definebb#1{\ea\gdef\csname d#1\endcsname{\ensuremath{\mathbb{#1}}\xspace}}
\def\definefrak#1{\ea\gdef\csname f#1\endcsname{\ensuremath{\mathfrak{#1}}\xspace}}
\def\definesf#1{\ea\gdef\csname i#1\endcsname{\ensuremath{\mathsf{#1}}\xspace}}
\def\definebar#1{\ea\gdef\csname #1bar\endcsname{\ensuremath{\overline{#1}}\xspace}}
\def\definetil#1{\ea\gdef\csname #1til\endcsname{\ensuremath{\widetilde{#1}}\xspace}}
\def\definehat#1{\ea\gdef\csname #1hat\endcsname{\ensuremath{\widehat{#1}}\xspace}}
\def\definechk#1{\ea\gdef\csname #1chk\endcsname{\ensuremath{\widecheck{#1}}\xspace}}
\def\defineul#1{\ea\gdef\csname u#1\endcsname{\ensuremath{\underline{#1}}\xspace}}
\def\autofmt@n#1\autofmt@end{\mathrm{#1}}
\def\autofmt@b#1\autofmt@end{\mathbf{#1}}
\def\autofmt@d#1#2\autofmt@end{\mathbb{#1}\mathsf{#2}}
\def\autofmt@c#1#2\autofmt@end{\mathcal{#1}\mathit{#2}}
\def\autofmt@s#1#2\autofmt@end{\mathscr{#1}\mathit{#2}}
\def\autofmt@f#1\autofmt@end{\mathsf{#1}}
\def\autofmt@k#1\autofmt@end{\mathfrak{#1}}
\def\autofmt@u#1\autofmt@end{\underline{\smash{\mathsf{#1}}}}
\def\autofmt@U#1\autofmt@end{\underline{\underline{\smash{\mathsf{#1}}}}}
\def\autofmt@h#1\autofmt@end{\widehat{#1}}
\def\autofmt@r#1\autofmt@end{\overline{#1}}
\def\autofmt@t#1\autofmt@end{\widetilde{#1}}
\def\autofmt@k#1\autofmt@end{\check{#1}}
\def\auto@drop#1{}
\def\autodef#1{\ea\ea\ea\@autodef\ea\ea\ea#1\ea\auto@drop\string#1\autodef@end}
\def\@autodef#1#2#3\autodef@end{%
  \ea\def\ea#1\ea{\ea\ensuremath\ea{\csname autofmt@#2\endcsname#3\autofmt@end}\xspace}}
\def\autodefs@end{blarg!}
\def\autodefs#1{\@autodefs#1\autodefs@end}
\def\@autodefs#1{\ifx#1\autodefs@end%
  \def\autodefs@next{}%
  \else%
  \def\autodefs@next{\autodef#1\@autodefs}%
  \fi\autodefs@next}
\DeclareSymbolFont{bbold}{U}{bbold}{m}{n}
\DeclareSymbolFontAlphabet{\mathbbb}{bbold}
\mdef\delbar{\overline{\partial}}
\newcommand{\dd}[1]{\ensuremath{\frac{\partial}{\partial {#1}}}}
\mdef\hf{\textstyle\frac12 }
\mdef\thrd{\textstyle\frac13 }
\mdef\qtr{\textstyle\frac14 }
\newcommand{\op}{^{\mathrm{op}}}
\mdef\Id{\mathrm{Id}}
\mdef\id{\mathrm{id}}
\def\frc#1/#2.{\frac{#1}{#2}}   % \frc x^2+1 / x^2-1 .
\mdef\ten{\mathrel{\otimes}}
\mdef\sqten{\mathrel{\boxtimes}}
\DeclareFontFamily{U}{min}{}
\DeclareFontShape{U}{min}{m}{n}{<-> udmj30}{}
\newcommand{\yon}{\!\text{\usefont{U}{min}{m}{n}\symbol{'210}}\!}
\DeclareFontFamily{U}{mathb}{\hyphenchar\font45}
\DeclareFontShape{U}{mathb}{m}{n}{
      <5> <6> <7> <8> <9> <10> gen * mathb
      <10.95> mathb10 <12> <14.4> <17.28> <20.74> <24.88> mathb12
      }{}
\DeclareSymbolFont{mathb}{U}{mathb}{m}{n}
\DeclareFontFamily{U}{mathx}{\hyphenchar\font45}
\DeclareFontShape{U}{mathx}{m}{n}{
      <5> <6> <7> <8> <9> <10>
      <10.95> <12> <14.4> <17.28> <20.74> <24.88>
      mathx10
      }{}
\DeclareSymbolFont{mathx}{U}{mathx}{m}{n}
\DeclareMathSymbol{\dotplus}       {2}{mathb}{"00}% name to be checked
\DeclareMathSymbol{\dotdiv}        {2}{mathb}{"01}% name to be checked
\DeclareMathSymbol{\dottimes}      {2}{mathb}{"02}% name to be checked
\DeclareMathSymbol{\divdot}        {2}{mathb}{"03}% name to be checked
\DeclareMathSymbol{\udot}          {2}{mathb}{"04}% name to be checked
\DeclareMathSymbol{\square}        {2}{mathb}{"05}% name to be checked
\DeclareMathSymbol{\Asterisk}      {2}{mathb}{"06}
\DeclareMathSymbol{\bigast}        {1}{mathb}{"06}
\DeclareMathSymbol{\coAsterisk}    {2}{mathb}{"07}
\DeclareMathSymbol{\bigcoast}      {1}{mathb}{"07}
\DeclareMathSymbol{\circplus}      {2}{mathb}{"08}% name to be checked
\DeclareMathSymbol{\pluscirc}      {2}{mathb}{"09}% name to be checked
\DeclareMathSymbol{\convolution}   {2}{mathb}{"0A}% name to be checked
\DeclareMathSymbol{\divideontimes} {2}{mathb}{"0B}% name to be checked
\DeclareMathSymbol{\blackdiamond}  {2}{mathb}{"0C}% name to be checked
\DeclareMathSymbol{\sqbullet}      {2}{mathb}{"0D}% name to be checked
\DeclareMathSymbol{\bigstar}       {2}{mathb}{"0E}
\DeclareMathSymbol{\bigvarstar}    {2}{mathb}{"0F}
\DeclareMathSymbol{\corresponds}   {3}{mathb}{"1D}% name to be checked
\DeclareMathSymbol{\boxleft}      {2}{mathb}{"68}
\DeclareMathSymbol{\boxright}     {2}{mathb}{"69}
\DeclareMathSymbol{\boxtop}       {2}{mathb}{"6A}
\DeclareMathSymbol{\boxbot}       {2}{mathb}{"6B}
\DeclareMathSymbol{\updownarrows}          {3}{mathb}{"D6}
\DeclareMathSymbol{\downuparrows}          {3}{mathb}{"D7}
\DeclareMathSymbol{\Lsh}                   {3}{mathb}{"E8}
\DeclareMathSymbol{\Rsh}                   {3}{mathb}{"E9}
\DeclareMathSymbol{\dlsh}                  {3}{mathb}{"EA}
\DeclareMathSymbol{\drsh}                  {3}{mathb}{"EB}
\DeclareMathSymbol{\looparrowdownleft}     {3}{mathb}{"EE}
\DeclareMathSymbol{\looparrowdownright}    {3}{mathb}{"EF}
\DeclareMathSymbol{\curvearrowleftright}   {3}{mathb}{"F2}
\DeclareMathSymbol{\curvearrowbotleft}     {3}{mathb}{"F3}
\DeclareMathSymbol{\curvearrowbotright}    {3}{mathb}{"F4}
\DeclareMathSymbol{\curvearrowbotleftright}{3}{mathb}{"F5}
\DeclareMathSymbol{\leftsquigarrow}        {3}{mathb}{"F8}
\DeclareMathSymbol{\rightsquigarrow}       {3}{mathb}{"F9}
\DeclareMathSymbol{\leftrightsquigarrow}   {3}{mathb}{"FA}
\DeclareMathSymbol{\lefttorightarrow}      {3}{mathb}{"FC}
\DeclareMathSymbol{\righttoleftarrow}      {3}{mathb}{"FD}
\DeclareMathSymbol{\uptodownarrow}         {3}{mathb}{"FE}
\DeclareMathSymbol{\downtouparrow}         {3}{mathb}{"FF}
\DeclareMathSymbol{\varhash}       {0}{mathb}{"23}
\DeclareMathSymbol{\sqSubset}       {3}{mathb}{"94}
\DeclareMathSymbol{\sqSupset}       {3}{mathb}{"95}
\DeclareMathSymbol{\nsqSubset}      {3}{mathb}{"96}
\DeclareMathSymbol{\nsqSupset}      {3}{mathb}{"97}
\DeclareMathAccent{\widecheck}    {0}{mathx}{"71}
\DeclareMathOperator\colim{colim}
\DeclareMathOperator\llim{lim}
\let\lim\llim
\newcommand{\too}[1][]{\ensuremath{\overset{#1}{\longrightarrow}}}
\let\into\hookrightarrow
\mdef\we{\overset{\sim}{\longrightarrow}}
\mdef\leftwe{\overset{\sim}{\longleftarrow}}
\let\xto\xrightarrow
\def\rightarrowtailfill@{\arrowfill@{\Yright\joinrel\relbar}\relbar\rightarrow}
\newcommand\xrightarrowtail[2][]{\ext@arrow 0055{\rightarrowtailfill@}{#1}{#2}}
\def\twoheadrightarrowfill@{\arrowfill@{\relbar\joinrel\relbar}\relbar\twoheadrightarrow}
\newcommand\xtwoheadrightarrow[2][]{\ext@arrow 0055{\twoheadrightarrowfill@}{#1}{#2}}
\def\slashedarrowfill@#1#2#3#4#5{%
  $\m@th\thickmuskip0mu\medmuskip\thickmuskip\thinmuskip\thickmuskip
   \relax#5#1\mkern-7mu%
   \cleaders\hbox{$#5\mkern-2mu#2\mkern-2mu$}\hfill
   \mathclap{#3}\mathclap{#2}%
   \cleaders\hbox{$#5\mkern-2mu#2\mkern-2mu$}\hfill
   \mkern-7mu#4$%
}
\def\rightslashedarrowfill@{%
  \slashedarrowfill@\relbar\relbar\mapstochar\rightarrow}
\newcommand\xslashedrightarrow[2][]{%
  \ext@arrow 0055{\rightslashedarrowfill@}{#1}{#2}}
\mdef\hto{\xslashedrightarrow{}}
\mdef\htoo{\xslashedrightarrow{\quad}}
\def\jd#1{\@jd#1\ej}
\def\@jd#1|-#2\ej{\@@jd#1,,\;\vdash\;\left(#2\right)}
\def\@@jd#1,{\@ifmtarg{#1}{\let\next=\relax}{\left(#1\right)\let\next=\@@@jd}\next}
\def\@@@jd#1,{\@ifmtarg{#1}{\let\next=\relax}{,\,\left(#1\right)\let\next=\@@@jd}\next}
\def\jdm#1{\@jdm#1\ej}
\def\@jdm#1|-#2\ej{\@@jd#1,,\\\vdash\;\left(#2\right)}
\long\def\my@drawfill#1#2;{%
\@skipfalse
\fill[#1,draw=none] #2;
\@skiptrue
\draw[#1,fill=none] #2;
}
\newif\if@skip
\newcommand{\skipit}[1]{\if@skip\else#1\fi}
\newcommand{\drawfill}[1][]{\my@drawfill{#1}}
\newcounter{nodemaker}
\newcommand{\drpullback}[1][dr]{\ar[#1,phantom,near start,"\lrcorner"]}
\newif\ifhyperref
  \let\your@state\state
  \def\state#1{\my@state#1}
  \def\my@state#1.{\gdef\currthmtype{#1}\your@state{#1.}}
  \let\your@staterm\staterm
  \def\staterm#1{\my@staterm#1}
  \def\my@staterm#1.{\gdef\currthmtype{#1}\your@staterm{#1.}}
  \let\@defthm\newtheorem
  \def\switchtotheoremrm{\let\@defthm\newtheoremrm}
  \def\defthm#1#2#3{\@defthm{#1}{#2}} % Ignore the third argument (for cleveref only)
  \let\your@section\section
  \def\section{\gdef\currthmtype{section}\your@section}
  \def\currthmtype{}
    \def\autoref#1{\ref*{label@name@#1}~\ref{#1}}
    \def\autoref#1{\ref{label@name@#1}~\ref{#1}}
    \let\old@label\label%
    \def\label#1{%
      {\let\your@currentlabel\@currentlabel%
        \edef\@currentlabel{\currthmtype}%
        \old@label{label@name@#1}}%
      \old@label{#1}}
  \let\cref\autoref
  \def\defthm#1#2#3{%
    %% Ensure all theorem types are numbered with the same counter
    \newaliascnt{#1}{thm}
    \newtheorem{#1}[#1]{#2}
    \aliascntresetthe{#1}
    %% This command tells cleveref's \cref what to call things
    \crefname{#1}{#2}{#3}% following brace must be on separate line to support poorman cleveref sed file
  }
    \def\defthm#1#2#3{% Ignore the third argument (for cleveref only)
      %% All types of theorems are number inside sections
      \newtheorem{#1}{#2}[section]%
      %% This command tells hyperref's \autoref what to call things
      \expandafter\def\csname #1autorefname\endcsname{#2}%
      %% This makes all the theorem counters actually the same counter
      \expandafter\let\csname c@#1\endcsname\c@thm}
    \def\defthm#1#2#3{\newtheorem{#1}[thm]{#2}} % Ignore the third argument (for cleveref only)
\let\SK@label\label\fi
    \let\old@label\label
    \let\your@thm\@thm
    \def\@thm#1#2#3{\gdef\currthmtype{#3}\your@thm{#1}{#2}{#3}}
    \def\currthmtype{}
    \def\label#1{{\let\your@currentlabel\@currentlabel\def\@currentlabel%
        {\currthmtype~\your@currentlabel}%
        \SK@label{#1@}}\old@label{#1}}
    \def\autoref#1{\ref{#1@}}
  \let\cref\autoref
\newtheorem{thm}{Theorem}[section]
  \crefname{thm}{Theorem}{Theorems}
\else\theoremstyle{definition}\fi
\else\theoremstyle{remark}\fi
  \crefname{part}{Part}{Parts}
  \crefname{figure}{Figure}{Figures}
  \crefname{table}{Table}{Tables}
  \let\qed\endproof
  \let\your@endproof\endproof
  \def\my@endproof{\your@endproof}
  \def\endproof{\my@endproof\gdef\my@endproof{\your@endproof}}
  \def\qedhere{\tag*{\endproofbox}\gdef\my@endproof{\relax}}
  \def\pr@@f[#1]{\subsubsection*{\sc #1.}}
\def\thmqedhere{\expandafter\csname\csname @currenvir\endcsname @qed\endcsname}
  \let\c@equation\c@subsection
  \let\c@equation\c@thm
\numberwithin{equation}{section}
\mdef\ep{\varepsilon}
\mdef\ph{\varphi}
\let\gm\gamma
\let\si\sigma
\mdef\sihat{\widehat{\si}}
\mdef\tauhat{\widehat{\tau}}
\title{$\ast$-Autonomous Envelopes and Conservativity}
\author{Michael Shulman
  \institute{University of San Diego\\ San Diego, California, USA}
  \email{shulman@sandiego.edu}}
\let\hom\multimap
\def\fsp#1{\mathord{\Asterisk}#1}
\let\coten\invamp
\let\unit\done
\def\d#1{#1^*}
\def\dd#1{#1^{**}}
\let\gl\fGl
\mdef\spoly{\ast\fPoly}
\mdef\supoly{\fPoly_{\le 1}}
\mdef\ssU{U^*_{\le 1}}
\def\mod#1{\fMod_{#1}}
\mdef\modp{\fMod_\cP}
\def\env#1{\fEnv_{#1}}
\mdef\envp{\fEnv_\cP}
\def\pj{(\cP,\cJ)}
\mdef\modpj{\fMod_{\pj}}
\mdef\envpj{\fEnv_{\pj}}
\mdef\pjk{(\cP,\cJ,\cK)}
\mdef\cjk{(\fsp\cC,\cJ,\cK)}
\mdef\modpjk{\fMod_{\pjk}}
\mdef\modcjk{\fMod_{\cjk}}
\mdef\envpjk{\fEnv_{\pjk}}
\mdef\envcjk{\fEnv_{\cjk}}
\let\chu\fChu
\def\tenj{\mathbin{\ten_\cJ}}
\def\noy#1{{}_{#1}{\reflectbox{\yon}}}
\def\scriptnoy#1{{}_{#1}{\reflectbox{$\scriptstyle\yon$}}}
\def\Umulti{U_{\mathrm{multi}}}
\def\p{^+}
\def\m{^-}
\def\o{^1}
\def\uA{\underline{A}}
\def\uf{\underline{\smash{f}}}
\mdef{\bbot}{\mathrlap{\bot}\,\bot}
\def\mamll{MA$\!^{\raisebox{1.5pt}{$\scriptscriptstyle-$}}$LL\xspace}
\def\imamll{IMA$\!^{\raisebox{1.5pt}{$\scriptscriptstyle-$}}$LL\xspace}
\def\Pl(#1|#2;#3){\cP\big(#1\mid #2\mathbin{;} #3\big)}
\def\El(#1|#2;#3){\cE\big(#1\mid #2\mathbin{;} #3\big)}
\def\Ul(#1|#2;#3){\cU\big(#1\mid #2\mathbin{;} #3\big)}
\def\linhom#1(#2|#3;#4){#1\big(#2\mid #3\mathbin{;} #4\big)}
\def\Pnl(#1;#2){\cP\big(#1 \mathbin{;} #2\big)}
\def\Enl(#1;#2){\cE\big(#1 \mathbin{;} #2\big)}
\def\nonlinhom#1(#2;#3){#1\big(#2 \mathbin{;} #3\big)}
\def\shift[#1|#2;#3]{[#1\,|\,#2\,;\,#3]}
\begin{document}
\maketitle

\begin{abstract}
  We prove 2-categorical conservativity for any $\{\mathbf{0},\top\}$-free fragment of MALL over its corresponding intuitionistic version: that is, that the universal map from a closed symmetric monoidal category to the $\ast$-autonomous category that it freely generates is fully faithful, and similarly for other doctrines.
  This implies that linear logics and graphical calculi for $\ast$-autonomous categories can also be interpreted canonically in closed symmetric monoidal categories.

  In particular, every closed symmetric monoidal category can be fully embedded in a $\ast$-autonomous category, preserving both tensor products and internal-homs.
  In fact, we prove this directly first with a Yoneda-style embedding (an enhanced ``Hyland envelope'' that can be regarded as a polycategorical form of Day convolution), and deduce 2-conservativity afterwards from Hyland--Schalk double gluing and a technique of Lafont.
  The same is true for other fragments of $\ast$-autonomous structure, such as linear distributivity, and the embedding can be enhanced to preserve any desired family of nonempty limits and colimits.
\end{abstract}

% \tableofcontents

% TODO: \bot as counit vs \(b)bot as dualizing object for a Chu construction?

\section{Introduction}
\label{sec:introduction}

Of course, classical logic is not conservative over intuitionistic logic.
The linear situation is subtler: it was shown by~\cite{schellinx:syntac-ll} that classical multiplicative-additive linear logic (MALL) is not conservative over its intuitionistic variant (IMALL), but if either $\mathbf{0}$ or $\hom$ is removed then conservativity obtains.

For categorical models it is natural to ask for a stronger \emph{2-dimensional conservativity} (a.k.a.\ ``abstract full completeness''), i.e.\ is the universal functor from a model of IMALL to a model of MALL (or some fragments thereof) fully faithful?
This would imply that the more expressive theory MALL can consistently and unambiguously be used to reason about categorical models of the less expressive IMALL.
In particular, circuit diagrams, proof nets, and term calculi for $\ast$-autonomous categories (models of MLL), such as those of~\cite{bcst:natded-coh-wkdistrib,cs:pfth-bill,dp:proofnetcats,troelstra:lec-ll,reddy:acceptors,reddy:dirprolog}, could be used to reason about closed symmetric monoidal categories (models of IMLL).
This would be useful because the $\ast$-autonomous isomorphism $A\hom B \cong \d{(A \ten \d B)}$\footnote{Note that the simpler $A\hom B \cong \d{A} \ten B$ holds only in a compact closed category, and a symmetric monoidal category cannot be fully embedded in a compact closed one unless it is \emph{traced}~\cite{jsv:traced-moncat}.} enables a simple graphical representation of internal-homs using bent strings, in contrast to the additional ``clasps'' and ``bubbles'' (as in~\cite{bs:rosetta}) that may appear na\"{i}vely to be needed.

Note that to have \emph{some} way to interpret $\ast$-autonomous graphical calculi in closed symmetric monoidal categories, it would suffice to show that any category of the latter sort embeds fully-faithfully in \emph{some} category of the former sort.
But in this case it could happen, in principle, that the interpretation depends on the embedding chosen.
Our 2-conservativity remedies this: because the \emph{universal} map from a closed symmetric monoidal category to a $\ast$-autonomous category is fully faithful, the interpretation obtained from this universal embedding will necessarily coincide with that obtained from any other embedding.

Since our proof of 2-conservativity is constructive, it is also possible to see it as a sort of ``normalization'' theorem: given any syntactic expression for an MLL-morphism between IMLL-types, we can normalize it to an expression for an IMLL-morphism.%\footnote{Thanks to Max New for pointing this out.}

Lafont~\cite{lafont:thesis} gave a general method for proving 2-conservativity between intuitionistic doctrines, using Artin gluing (the semantic form of logical relations) and the Yoneda embedding.
When working with ``classical'' theories such as MLL, gluing must be replaced by the \emph{double gluing} construction of~\cite{hs:glue-orth-ll,hasegawa:glueing-cll}, but (contrary to claims in \textit{op.~cit.}) the ordinary Yoneda embedding is also insufficient in this case.
(Double gluing involves logical relations on both objects and their duals, but the Yoneda embedding carries no information about duals; thus the relevant functor to the gluing construction fails to preserve internal-homs. See \cref{rmk:mistake}.)

An appropriate modified Yoneda lemma uses the \emph{envelope} of~\cite{hyland:pfthy-abs}, which combines a sort of ``presheaf'' on a polycategory with a Chu construction.
It can also be thought of as an enhancement of Isbell duality~\cite{isbell:soc}, which uses the hom-functor $\cC(-,-)$ as a \emph{canonical} dualizing object $\bot$ between presheaf and copresheaf categories for which all representables are reflexive (i.e.\ $A \cong (A\hom \bot)\hom \bot$), to a Chu situation where the dualizing object lives in the same category as the objects being dualized.

We modify Hyland's envelope so that the embedding preserves any desired tensor and cotensor products and nonempty limits and colimits, by adapting the standard trick~\cite{fk:cts-frs,kelly:enriched} for making Yoneda embeddings preserve colimits.
(The limits and colimits must be nonempty, because only nonempty limits and colimits in closed symmetric monoidal categories are polycategorical.)
This yields a Yoneda-type embedding for polycategories, including closed symmetric monoidal categories.
Combining it with double gluing and Lafont's method, we obtain 2-conservativity for any fragments of IMALL and MALL lacking $\mathbf{0}$ and $\top$, and other pairs of theories such as MALL and its negation-free fragment (corresponding to linearly distributive categories; see~\cite{bcst:natded-coh-wkdistrib}).
Thus, $\ast$-autonomous calculi can be used for closed symmetric monoidal categories, giving a new perspective on why the same ``Kelly-MacLane graphs'' (and enhanced versions incorporating units) appear in coherence for closed symmetric monoidal categories and for $\ast$-autonomous categories~\cite{km:coh-closed,trimble:thesis,blute:proof-nets,blute:ll-coh,hughes:free-staraut}.

In particular, we obtain a semantic proof of the conservativity of multiplicative linear logic over its intuitionistic variant, similar to the results of of~\cite{schellinx:syntac-ll}.
Our methods are both more and less powerful than the syntactic ones of~\cite{schellinx:syntac-ll}.
On one hand, in addition to yielding a 2-categorical statement, our notion of full-faithfulness is \emph{polycategorical} rather than \emph{multicategorical}: syntactically this means that if \emph{any} MLL sequent $\Gamma\vdash\Delta$ between IMLL types is derivable in MLL, then it is also derivable in IMLL --- and \emph{therefore} $\Delta$ must consist of only one type.
By contrast, the conservativity of~\cite{schellinx:syntac-ll} is only multicategorical: the fact that $\Delta$ contains only one type must be assumed at the outset.

On the other hand, sometimes multicategorical conservativity can hold while polycategorical conservativity fails.
In particular, combining our results with those of~\cite{schellinx:syntac-ll}, we see that this is the case when a terminal object $\top$ (but not an initial object $\mathbf{0}$) is included in MLL and IMLL.
We do not know whether our semantic methods can be adapted to such cases.

% https://mathoverflow.net/q/343167/49

\section{Adding Duals to Polycategories}
\label{sec:starpoly}

To use Lafont's technique, we require a fully faithful Yoneda-type embedding of a closed symmetric monoidal category into an $\ast$-autonomous category.
We will construct this by using polycategories.

A symmetric\footnote{All our multi- and polycategories will be symmetric, so we henceforth drop the adjective.} \textbf{polycategory}~\cite{szabo:polycats} semantically represents the judgmental structure of classical linear logic, with hom-sets $\cP(\Gamma;\Delta)$ where $\Gamma$ and $\Delta$ are lists of objects, and compositions such as $\cP(\Gamma,A;\Delta) \times \cP(\Sigma;\Pi,A) \to \cP(\Gamma,\Sigma;\Delta,\Pi)$.
A \textbf{multicategory} can be defined as a polycategory that is \textbf{co-unary}, i.e.\ all codomains have one object.
In particular, a closed symmetric monoidal category $(\cC,\ten,\unit,\hom)$ can be regarded as a multicategory that is \textbf{representable and closed}, meaning there are objects with universal properties:
\begin{gather*}
  % \cC(A_1,\dots,A_n;B) &= \cC(A_1\ten\cdots\ten A_n,B)
  \cC(\Gamma;C) \cong \cC(\Gamma,\unit;C)\qquad
  \cC(\Gamma,A,B;C) \cong \cC(\Gamma,A\ten B; C)\qquad
  \cC(\Gamma,A;B) \cong \cC(\Gamma; A\hom B).
\end{gather*}
Similarly, a $\ast$-autonomous category \cE can be regarded as a representable\footnote{Note that a representable multicategory is not representable as a co-unary polycategory.} polycategory with duals, i.e.\ having objects with universal properties:
\begin{gather*}%{2}
  \cE(\Gamma,A,B;\Delta) \cong \cE(\Gamma,A\ten B; \Delta) \qquad
  \cE(\Gamma;\Delta,A,B) \cong \cE(\Gamma; \Delta,A\coten B) \\
  \cE(\Gamma;\Delta) \cong \cE(\Gamma,\unit;\Delta) \qquad
  \cE(\Gamma;\Delta) \cong \cE(\Gamma;\Delta,\bot)\qquad
  \cE(\Gamma,A; \Delta) \cong \cE(\Gamma;\Delta,\d A)
\end{gather*}
A \textbf{$\ast$-polycategory}~\cite{hyland:pfthy-abs} is a polycategory with specified strictly involutive duals ($\dd{A} = A$).
In particular, a representable $\ast$-polycategory is $\ast$-autonomous.

The forgetful functor from $\ast$-polycategories to polycategories has a left adjoint $\cP\mapsto\fsp\cP$.
The objects of $\fsp\cP$ consist of two copies of the objects of \cP, denoted $A$ and $\Abar$ respectively.
The morphisms are determined by saying that
\[ \fsp\cP(\Gamma,\overline\Pi; \Delta, \overline\Sigma) = \cP(\Gamma,\Sigma; \Delta,\Pi). \]
%and noting that all homsets are canonically isomorphic to one of this form. %, uniquely up to permutations that also act on the right hand side.
Composition is inherited from \cP, perhaps in the other order.
For instance, if $f\in \fsp\cP(A,\Bbar;C,\Dbar) = \cP(A,D;C,B)$ and $g\in \fsp\cP(C,\Xbar;\Bbar,Y) = \cP(C,B;\,Y,X)$ then $g\circ^{\fsp\cP}_C f = g\circ_C^\cP f$ and $f \circ_{\d B}^{\fsp\cP} g = g\circ_B^\cP f$.
Thus $\cP$ embeds fully-faithfully in $\fsp\cP$.

Polycategory functors preserve duals, so if \cP has duals the map $\cP \to \fsp\cP$ is essentially surjective, hence an equivalence.
Thus any $\ast$-autonomous category is equivalent to a representable $\ast$-polycategory.\footnote{This was shown by~\cite{chs:coh-staut} using a \emph{right} adjoint instead of our left adjoint.  In fact \mbox{$\ast$-polycategories} are both 2-monadic and 2-comonadic over the 2-category of polycategories, functors, and natural isomorphisms, and the 2-monad and 2-comonad are pseudo-idempotent~\cite{kl:property-like}.}

Any tensor product $A\ten B$ in \cP is also one in $\fsp\cP$, while $\overline{(A\ten B)}$ is a cotensor product $\Abar \coten \Bbar$ in $\fsp\cP$.
The situation for cotensor products $A\coten B$ is dual, while that for units and counits is similar.
% Dually, any cotensor product $A\coten B$ in \cP is also a cotensor product in $\fsp\cP$, while $\d{(A\coten B)}$ is a tensor product $\d A \ten \d B$ in $\fsp\cP$.
% Similarly, any unit $\unit$ or counit $\bot$ in \cP is also a unit or counit in $\fsp\cP$, respectively, while $\d\unit$ and $\d\bot$ are a counit or unit in $\fsp\cP$.
% (Thus in particular, if \cP has both $\unit$ and $\bot$, then $\d\unit\cong\bot$ and $\d\bot\cong\unit$ in $\fsp\cP$.
% More generally, if $A$ and $B$ are dual in \cP, then $\d A \cong B$ and $\d B \cong A$ in $\fsp\cP$.)
However, even if \cP has all tensor and/or cotensor products, $\fsp\cP$ will not in general have $\d A \ten B$ or $\d A \coten B$.

If \cC is a multicategory regarded as a co-unary polycategory, then $\fsp\cC(\Gamma,\d\Pi;\Delta,\d\Sigma)$ is nonempty just when $|\Delta\cup\Pi|=1$.
(This left adjoint to the forgetful functor from $\ast$-polycategories to multicategories appears in~\cite{dh:dk-cyc-opd}.)
If \cC is also closed, then $A\hom B$ is a cotensor product $\d A \coten B$ in $\fsp\cC$, for:
\begin{align*}
  \fsp\cC(\Gamma,\overline\Pi; \Delta, \overline\Sigma, A\hom B)
  &\cong \cC(\Gamma,\Sigma; \Pi,\Delta, A\hom B)\\
  \fsp\cC(\Gamma,\overline\Pi; \Delta, \overline\Sigma, \Abar, B)
  &\cong \cC(\Gamma,\Sigma, A ; \Pi,\Delta, B)
\end{align*}
and both right-hand sides are nonempty only if $\Pi = \Delta=\emptyset$, in which case they are naturally isomorphic by the universal property of $A\hom B$ in \cC.
Let $\Umulti$ denote the forgetful functor from polycategories to multicategories; then we have shown:

\begin{thm}\label{thm:smcc-poly}
  If \cC is closed symmetric monoidal, there is a $\ast$-polycategory $\fsp\cC$ and a fully faithful functor $\cC \to \Umulti(\fsp\cC)$ that preserves tensor products (including the unit) and takes internal-homs $A\hom B$ to cotensor products $\Abar \coten B$.\qed
\end{thm}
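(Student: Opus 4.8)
The bulk of the work has already been done in the discussion preceding the statement; the theorem is essentially a summary, so the "proof" is a matter of assembling the pieces and checking the functor is well-defined and full and faithful. First I would fix the $\ast$-polycategory: take $\cC$, regard it as a co-unary polycategory, and form $\fsp\cC$ via the left adjoint to the forgetful functor from $\ast$-polycategories to polycategories (equivalently, as remarked, the left adjoint from $\ast$-polycategories to multicategories appearing in~\cite{dh:dk-cyc-opd}). By construction $\fsp\cC(\Gamma,\overline\Pi;\Delta,\overline\Sigma) = \cC(\Gamma,\Sigma;\Pi,\Delta)$ (using co-unarity of $\cC$ to see this is empty unless $|\Delta\cup\Pi|=1$), and $\dd A = A$ strictly. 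The candidate functor $J\colon\cC\to\Umulti(\fsp\cC)$ is the identity on objects (sending $A\in\cC$ to the object $A$, not $\Abar$) and the identity on hom-sets: $\cC(\Gamma;B) = \fsp\cC(\Gamma;B) = \Umulti(\fsp\cC)(\Gamma;B)$ literally, by the defining formula with $\Pi=\Sigma=\emptyset$ and $\Delta=\emptyset$.

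Next I would verify $J$ is a multicategory functor: it must commute with composition and with the symmetric-group actions. Composition in $\Umulti(\fsp\cC)$ of two co-unary arrows is a special case of polycategorical composition in $\fsp\cC$, which by the adjunction's construction restricts on the co-unary part to the original composition in $\cC$ (possibly "in the other order", as the excerpt notes, but for co-unary arrows composed along a non-dualized object there is no reversal); similarly for identities and permutations. So $J$ is a functor, and since it is bijective on the relevant hom-sets it is fully faithful. This is the routine part.

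Then the preservation claims. For tensor products: the excerpt already records that any tensor product $A\ten B$ in $\cC$ remains a tensor product in $\fsp\cC$ — one checks the universal property $\fsp\cC(\Gamma,A,B;\Delta)\cong\fsp\cC(\Gamma,A\ten B;\Delta)$ by unwinding both sides through the defining formula and invoking the universal property of $A\ten B$ in $\cC$ — and likewise the unit $\unit$ of $\cC$ remains a unit in $\fsp\cC$. Since $J$ is identity-on-objects and the universal arrows witnessing $A\ten B$ are the images under $J$ of those in $\cC$, $J$ preserves tensor products and the unit. For internal-homs: this is exactly the displayed computation in the excerpt. One writes out
\begin{align*}
  \fsp\cC(\Gamma,\overline\Pi;\Delta,\overline\Sigma,A\hom B)
    &\cong \cC(\Gamma,\Sigma;\Pi,\Delta,A\hom B),\\
  \fsp\cC(\Gamma,\overline\Pi;\Delta,\overline\Sigma,\Abar,B)
    &\cong \cC(\Gamma,\Sigma,A;\Pi,\Delta,B),
\end{align*}
observes that both right-hand sides are empty unless $\Pi=\Delta=\emptyset$ (by co-unarity), and in that case the universal property of $A\hom B$ in $\cC$ gives a natural isomorphism between them, natural in all of $\Gamma,\Sigma$ — i.e.\ the object $J(A\hom B)$ satisfies the universal property of the cotensor product $\Abar\coten B$ in $\fsp\cC$. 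Hence $J$ takes $A\hom B$ to $\Abar\coten B$ as claimed.

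The only genuinely delicate point — and the one I would spell out most carefully — is the construction of $\fsp\cC$ itself and the identification of its hom-sets by the stated formula, i.e.\ that the left adjoint really does produce a $\ast$-polycategory whose morphisms are "freely" described this way with no further relations. I would lean on the cited constructions (\cite{hyland:pfthy-abs} for $\ast$-polycategories, \cite{dh:dk-cyc-opd} for this particular left adjoint) rather than rebuild them, and merely check that co-unarity of $\cC$ forces the homsets into the form asserted. Everything else is bookkeeping with the defining isomorphisms and the universal properties already available in $\cC$.
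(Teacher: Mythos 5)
Your proposal is correct and follows essentially the same route as the paper: the theorem's proof there is exactly the preceding discussion — the explicit description $\fsp\cC(\Gamma,\overline\Pi;\Delta,\overline\Sigma)=\cC(\Gamma,\Sigma;\Delta,\Pi)$ of the free $\ast$-polycategory, full faithfulness of the co-unary embedding, preservation of tensors and unit, and the displayed computation showing $A\hom B$ is the cotensor $\Abar\coten B$ since both hom-sets vanish unless $\Pi=\Delta=\emptyset$. Your additional bookkeeping (functoriality, identity-on-objects, reliance on the cited construction of the left adjoint) fills in exactly the routine details the paper leaves implicit.
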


Therefore, to embed \cC in a $\ast$-autonomous category preserving both tensor products and internal-homs, it will suffice to embed the $\ast$-polycategory $\fsp\cC$ in a $\ast$-autonomous category preserving \emph{those tensor and cotensor products that exist}.
%(Duals are automatically preserved, up to isomorphism, by any polycategory functor.)

\section{Modules}
\label{sec:saut}

Let \cP be a polycategory.
Following~\cite[\S5]{hyland:pfthy-abs}, a \textbf{\cP-module} is a family of sets $\cU(\Gamma;\Delta)$ with symmetric group actions and left and right actions by \cP
\begin{equation*}
  \cP(\Pi;\Sigma,A) \times \cU(A,\Gamma;\Delta) \to \cU(\Pi,\Gamma;\Sigma,\Delta)
  \qquad
   \cU(\Gamma;\Delta,A)\times \cP(A,\Pi;\Sigma) \to \cU(\Gamma,\Pi;\Delta,\Sigma).
\end{equation*}
satisfying the same associativity and unit laws as the composition in a polycategory.

\begin{egs}\ 
  \begin{enumerate}
  \item The hom-sets $\cP(\Gamma;\Delta)$ form a \textbf{tautological} \cP-module, denoted \cP.
  \item A \textbf{shifted} module $\cU[\Pi;\Sigma]$ is defined by $\cU[\Pi;\Sigma](\Gamma;\Delta) = \cU(\Gamma,\Pi;\Delta,\Sigma)$.
  \item For $A\in \cP$, we have the \textbf{representable} modules\footnote{The symbol $\yon$ is the hiragana kana for ``yo''; its use for Yoneda embeddings was introduced in~\cite{jfs:twqft}.}
    \(\yon_A = \cP[\,;A]\)  and \(\noy{A} = \cP[A;\,] \).
  \end{enumerate}
\end{egs}

\begin{thm}[{\cite[\S5.2]{hyland:pfthy-abs}}]\label{thm:mod-smcc}
  The category $\modp$ of \cP-modules is closed symmetric monoidal and complete and cocomplete.
\end{thm}
\begin{proof}
  Let $F_{\ten} \Umulti \fsp \cP$ be the free symmetric strict monoidal category on the underlying multicategory of $\fsp\cP$.
  Its objects are finite lists of objects and \cP and their formal duals, but by symmetry each is isomorphic to one of the form $(\Gamma,\overline\Delta)$ where $\Gamma$ and $\Delta$ consist of objects of \cP.
  A \cP-module \cU is then equivalent to an ordinary presheaf on $F_{\ten} \Umulti \fsp \cP$ defined by $(\Gamma,\overline\Delta)\mapsto \cU(\Gamma;\Delta)$.
  But now since $\modp$ is a presheaf category on a symmetric monoidal domain, it is complete and cocomplete and inherits a closed symmetric monoidal Day convolution~\cite{day:closed} monoidal structure.
\end{proof}

We will often consider $\modp$ as a multicategory.
In this case, a module morphism $(\cU_1,\cdots, \cU_n) \to \cV$ consists of functions
\[ \cU_1(\Gamma_1;\Delta_1) \times \cdots \times \cU_n(\Gamma_n;\Delta_n) \to \cV(\Gamma_1,\dots,\Gamma_n; \Delta_1,\dots,\Delta_n) \]
that commute with the symmetric group actions and the actions of \cP.
The unit module \cI is defined by $\cI(\,;\,) = 1$ and all other sets empty, so a nullary morphism $()\to \cV$ is just an element of $\cV(\,;\,)$. 
And in the internal-hom of modules, $(\cU \hom \cV)(\Gamma;\Delta)$ is the set of module morphisms from $\cU$ to $\cV[\Gamma;\Delta]$.

We regard $\modp$ as a polycategorical ``presheaf category'', justified by Yoneda lemmas:

\begin{thm}[Polycategorical Yoneda lemmas]\label{thm:yoneda}
  We have natural isomorphisms
  \begin{alignat}{2}
    \modp(\yon_A; \cV) &\cong \cV(A;\,) &\qquad
    \modp(\noy{A};\cV) &\cong \cV(\,;A)\label{eq:yoneda1} \\
    \modp(\Gamma,\yon_A ; \cV) &\cong \modp(\Gamma; \cV[A;\,]) &\qquad
    \modp(\Gamma,\noy{A} ; \cV) &\cong \modp(\Gamma; \cV[\,;A])\label{eq:yoneda4}\\
    (\yon_A \hom \cV) &\cong \cV[A;\,] &\qquad
    (\noy{A} \hom \cV) &\cong \cV[\,;A] \label{eq:yoneda3}
  \end{alignat}
\end{thm}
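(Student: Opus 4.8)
The plan is to prove all six isomorphisms by the standard Yoneda argument, adapted to the polycategorical/module setting, and to observe that each of the three lines is really a single statement applied to the two ``representable'' modules $\yon_A = \cP[\,;A]$ and $\noy{A} = \cP[A;\,]$. I would first establish the two isomorphisms in \eqref{eq:yoneda1}, then derive \eqref{eq:yoneda4} and \eqref{eq:yoneda3} from them together with the description of $\hom$ and the adjunction $\modp(\Gamma,\cU;\cV)\cong\modp(\Gamma;\cU\hom\cV)$ from \cref{thm:mod-smcc}.

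For \eqref{eq:yoneda1}, I would unwind the definitions. A module morphism $\yon_A\to\cV$ is a family of functions $\yon_A(\Gamma;\Delta) = \cP(\Gamma;\Delta,A)\to\cV(\Gamma;\Delta)$ commuting with the symmetric group actions and with the left/right $\cP$-actions. Given such a morphism $\varphi$, send it to $\varphi_{A;\,}(\id_A)\in\cV(A;\,)$, where $\id_A\in\cP(A;A) = \yon_A(A;\,)$ is the identity. Conversely, given $v\in\cV(A;\,)$, define $\varphi(f) = f\cdot v$ for $f\in\cP(\Gamma;\Delta,A)$, using the right $\cP$-action $\cV(A;\,)\times\cP(A,\Gamma';\Delta')\to\cV(\Gamma';\Delta')$ — here one composes $f$ (viewed as acting on the $A$-output) against $v$. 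The unit axiom for the module action gives that these two passages are mutually inverse, and the associativity and equivariance axioms give that $f\mapsto f\cdot v$ is indeed a module morphism and that the assignment is natural in $\cV$. The second isomorphism of \eqref{eq:yoneda1} is strictly dual, using $\noy{A} = \cP[A;\,]$ and the left $\cP$-action instead.

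Line \eqref{eq:yoneda3} is then immediate: by the proof of \cref{thm:mod-smcc}, $(\cU\hom\cV)(\Gamma;\Delta)$ is the set of module morphisms $\cU\to\cV[\Gamma;\Delta]$, so $(\yon_A\hom\cV)(\Gamma;\Delta) = \modp(\yon_A;\cV[\Gamma;\Delta])\cong\cV[\Gamma;\Delta](A;\,) = \cV(A,\Gamma;\Delta) = \cV[A;\,](\Gamma;\Delta)$ by the first isomorphism of \eqref{eq:yoneda1} applied to the shifted module $\cV[\Gamma;\Delta]$, and one checks this chain is natural in $\Gamma,\Delta$ hence an isomorphism of modules; the $\noy{A}$ case is dual. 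Finally, line \eqref{eq:yoneda4} follows by combining: $\modp(\Gamma,\yon_A;\cV)\cong\modp(\Gamma;\yon_A\hom\cV)\cong\modp(\Gamma;\cV[A;\,])$, the first step being the hom-tensor adjunction of \cref{thm:mod-smcc} and the second being \eqref{eq:yoneda3}; again the $\noy{A}$ version is dual.

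The routine-but-fiddly part — and the only place I expect any real friction — is bookkeeping the symmetric-group actions and the precise variance of the $\cP$-actions when writing down $f\mapsto f\cdot v$: one must be careful that ``composing along the $A$ in the codomain of $f$'' lands in the right module component and that the permutation actions match up, since the module morphism condition and the Yoneda inverse both depend on using the same action conventions. Once the conventions from \cref{sec:saut} are pinned down this is mechanical, so I would state the conventions explicitly and then verify the unit and associativity axioms suffice, leaving the equivariance checks to the reader.
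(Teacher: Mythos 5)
Your proposal is correct and takes essentially the same route as the paper's own proof: the classical Yoneda argument ($\phi\mapsto\phi(1_A)$ and $v\mapsto(f\mapsto v\circ_A f)$, with the unit/associativity axioms of the module action giving mutual inversion) for \eqref{eq:yoneda1}, then \eqref{eq:yoneda3} by applying it to the shifted modules $\cV[\Gamma;\Delta]$ via the definition of $\hom$ from \cref{thm:mod-smcc}, and finally \eqref{eq:yoneda4} from \eqref{eq:yoneda3} together with the universal property of $\hom$. The only nitpick is terminological: the action you use to define $f\mapsto f\cdot v$ (composing $f\in\cP(\Gamma;\Delta,A)$ along its $A$-output with $v\in\cV(A;\,)$) is, in the paper's conventions, the \emph{left} $\cP$-action rather than the right one, a naming slip that does not affect the argument.
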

% \begin{thm}\label{thm:yoneda}
%   \begin{enumerate}
%   \item Morphisms $\yon_A \to \cU$ are in natural bijection with the set $\cU(A;\,)$.\label{item:yoneda1}
%   \item Dually, morphisms $\noy{A} \to \cU$ are in natural bijection with $\cU(\,;A)$.\label{item:yoneda2}
%   \item Internally, we have $(\yon_A \hom \cU) \cong \cU[A;\,]$ and $(\noy{A} \hom \cU) \cong \cU[\,;A]$.\label{item:yoneda3}
%   \item In particular, $(\yon_A \hom \cP) \cong \noy{A}$ and $(\noy{A}\hom \cP) \cong \yon_A$.\label{item:dual-rep}
%   \end{enumerate}
% \end{thm}
\begin{proof}
  This follows formally from properties of Day convolution, but we can also give an explicit proof.
  Since $1_A \in \cP(A;A) = \yon_A(A;\,)$, any $\phi:\yon_A \to \cV$ induces $\phi(1_A) \in \cV(A;\,)$.
  Conversely, from $x\in \cV(A;\,)$ we define $\psi_x:\yon_A \to \cV$ by:
  \[ \Big(f\in \yon_A(\Gamma;\Delta) = \cP(\Gamma;\Delta,A)\Big)
    \mapsto
    \Big(x \circ_A f \in \cV(\Gamma;\Delta) \Big).
  \]
  The associativity of the \cP-action on \cV ensures that this is a \cP-module morphism.
  Clearly $\psi_x(1_A) = x \circ_A 1_A = x$, while on the other side we have
  \begin{equation*}
    \psi_{\phi(1_A)}(f) = \phi(1_A) \circ_A f
    = \phi (1_A \circ_A f)
    = \phi(f).
  \end{equation*}
  % using the fact that $\phi$ is a \cP-module morphism.
  This, and a dual calculation, proves~\eqref{eq:yoneda1}.
  For~\eqref{eq:yoneda3}, we have
  \begin{equation*}
  % \begin{alignat*}{2}
    (\yon_A \hom \cV)(\Gamma;\Delta)
    = \modp(\yon_A, \cV[\Gamma;\Delta])\\
    \cong
    \cV[\Gamma;\Delta](A;\,)  % &\qquad \text{(by~\ref{item:yoneda1})}\\
    =
    \cV(A,\Gamma;\Delta)
    \cong \cV[A;\,](\Gamma;\Delta)
  % \end{alignat*}
  \end{equation*}
  and dually.
  Finally,~\eqref{eq:yoneda4} follows from~\eqref{eq:yoneda3} and the universal property of $\hom$.
\end{proof}

\begin{cor}\label{thm:dual-rep}
  $(\yon_A \hom \cP) \cong \noy{A}$ and $(\noy{A}\hom \cP) \cong \yon_A$.\qed
\end{cor}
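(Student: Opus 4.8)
The plan is to obtain both isomorphisms as immediate instances of the third polycategorical Yoneda lemma~\eqref{eq:yoneda3}, specialized to the dualizing module $\cV = \cP$. First I would recall from the Examples above that the representable modules at $A$ are \emph{by definition} the shifted modules $\yon_A = \cP[\,;A]$ and $\noy{A} = \cP[A;\,]$. Then I would substitute $\cV = \cP$ into the two halves of~\eqref{eq:yoneda3}: the left half gives $(\yon_A \hom \cP) \cong \cP[A;\,]$, which is exactly $\noy{A}$; the right half gives $(\noy{A} \hom \cP) \cong \cP[\,;A]$, which is exactly $\yon_A$. Naturality of~\eqref{eq:yoneda3} in $\cV$ makes these isomorphisms canonical, so no further bookkeeping is required.

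There is essentially no obstacle here. The one point worth spelling out is that the shifted module $\cP[A;\,]$ literally unfolds, via the defining formula $\cU[\Pi;\Sigma](\Gamma;\Delta) = \cU(\Gamma,\Pi;\Delta,\Sigma)$ applied with $\cU = \cP$, to the family $(\Gamma;\Delta)\mapsto \cP(\Gamma,A;\Delta)$ that defines $\noy{A}$, and dually $\cP[\,;A]$ unfolds to $\yon_A$; after that, \cref{thm:yoneda} does all the work.

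It is perhaps worth remarking that this corollary is the module-level shadow of the fact that $\cP$, regarded as an object of $\modp$, will play the role of the dualizing object in the envelope construction: the two representables at $A$ are each other's internal-hom into $\cP$, hence formally dual, which is precisely the behavior one wants of representables in a $\ast$-autonomous setting.
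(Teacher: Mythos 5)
Your proposal is correct and matches the paper's intended argument exactly: the corollary is stated with an immediate \qed precisely because it is the specialization of~\eqref{eq:yoneda3} to $\cV=\cP$, combined with the definitional identities $\yon_A = \cP[\,;A]$ and $\noy{A} = \cP[A;\,]$. Nothing further is needed.
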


In particular, both kinds of representable module are ``reflexive'': $\yon_A \cong ((\yon_A \hom \cP)\hom\cP)$ and $\noy A \cong ((\noy A \hom \cP)\hom\cP)$.

\begin{cor}\label{thm:yoneda-emb}
  There are full embeddings of multicategories
  \[\yon : \Umulti(\cP)\to\modp \qquad\text{and}\qquad \noy{} : \Umulti(\cP\op) \to \modp.\]
\end{cor}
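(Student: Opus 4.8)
The plan is to deduce this corollary directly from the Yoneda lemmas of \cref{thm:yoneda}, specifically from the hom-computation in~\eqref{eq:yoneda1} together with the fullness embedded in the isomorphism $\psi_x \mapsto \psi_x(1_A) = x$. First I would recall that a full embedding of multicategories $F : \cM \to \cN$ means a functor that is injective on objects and induces bijections $\cM(\Gamma; B) \xrightarrow{\sim} \cN(F\Gamma; FB)$ for all $\Gamma, B$; so there are two things to check for $\yon$, namely that it is a well-defined multifunctor, and that it is bijective on hom-sets.

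For well-definedness: the assignment $A \mapsto \yon_A$ on objects is clear, and on a multimorphism $f \in \cP(A_1,\dots,A_n; B)$ we must produce a module multimorphism $\yon_{A_1},\dots,\yon_{A_n} \to \yon_B$, i.e.\ functions $\prod_i \yon_{A_i}(\Gamma_i;\Delta_i) \to \yon_B(\Gamma_1,\dots,\Gamma_n; \Delta_1,\dots,\Delta_n)$, that is, $\prod_i \cP(\Gamma_i;\Delta_i,A_i) \to \cP(\Gamma_1,\dots,\Gamma_n;\Delta_1,\dots,\Delta_n,B)$, given by $(g_i)_i \mapsto f \circ (g_1,\dots,g_n)$ (composing each $g_i$ into the $A_i$-input of $f$, which is legitimate since each $A_i$ is a co-unary output of $g_i$). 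The equivariance and $\cP$-linearity of this map, and the fact that $\yon$ respects multicategory composition and identities, all follow from the associativity, unit, and interchange axioms of the polycategory $\cP$; these are the routine calculations I would not grind through. The same construction on $\cP\op$ (formally: apply the case already done to the polycategory $\cP\op$, using that $\noy{A}$ in $\cP$ is $\yon_A$ in $\cP\op$) handles the second embedding.

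For full-faithfulness: fix objects $A_1,\dots,A_n, B$ of $\cP$. I claim $\modp(\yon_{A_1},\dots,\yon_{A_n}; \yon_B) \cong \cP(A_1,\dots,A_n; B)$ naturally. Using the closed structure of $\modp$ (\cref{thm:mod-smcc}) and currying one variable at a time, $\modp(\yon_{A_1},\dots,\yon_{A_n};\yon_B) \cong \modp(\yon_{A_1}, \yon_{A_2}\hom(\cdots\hom(\yon_{A_n}\hom\yon_B)\cdots))$; then~\eqref{eq:yoneda1} applied to the unary case gives $\modp(\yon_A;\cV)\cong\cV(A;\,)$, so this iterates to $(\yon_{A_2}\hom\cdots\hom(\yon_{A_n}\hom\yon_B)\cdots)(A_1;\,)$, and by repeated use of~\eqref{eq:yoneda3} each $\yon_{A_i}\hom(-)$ becomes a shift $(-)[A_i;\,]$, finally yielding $\yon_B[A_n;\,]\cdots[A_1;\,](A_1;\,)$... more carefully, unwinding the shifts gives $\yon_B(A_1,\dots,A_n;\,) = \cP(A_1,\dots,A_n; B)$. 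Unwinding this composite identification shows it sends a module multimorphism $\Phi$ to $\Phi(1_{A_1},\dots,1_{A_n})$, i.e.\ to the image under $\yon$-on-morphisms of that element — so the bijection is exactly induced by $\yon$, which is what full-faithfulness requires. Injectivity on objects is immediate since $\yon_A(A;\,) \ni 1_A$ pins down $A$.

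I expect the main obstacle to be purely bookkeeping rather than conceptual: organizing the iterated currying and the iterated application of~\eqref{eq:yoneda3} so that the shifts $[A_i;\,]$ compose in the correct order and reassemble into the single module $\yon_B(A_1,\dots,A_n;\,)$, and checking that the resulting bijection genuinely coincides with the map induced by the multifunctor $\yon$ (as opposed to some twist by a symmetry). Alternatively — and perhaps more cleanly — one can avoid the induction entirely by invoking \cref{rmk:day}: $\modp$ is the Day convolution presheaf category on $F_\ten\Umulti\fsp\cP$, the representable $\yon_A$ corresponds to the ordinary representable presheaf at $A$, and full-faithfulness of $\yon$ on multi-hom-sets then follows from the strong monoidal Yoneda embedding being fully faithful as a functor of multicategories. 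I would present the elementary argument as the main line and remark on the Day-convolution shortcut.
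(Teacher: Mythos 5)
Your proposal is correct and follows essentially the same route as the paper: the paper's proof simply reads off $\modp(\yon_{A_1},\dots,\yon_{A_n};\yon_B)\cong\yon_B(A_1,\dots,A_n;\,)=\cP(A_1,\dots,A_n;B)$ from \cref{thm:yoneda} (your currying plus~\eqref{eq:yoneda1} and~\eqref{eq:yoneda3} is just an unpacking of~\eqref{eq:yoneda4}, which the paper derives from those) and leaves functoriality as ``easy to check,'' which is exactly the routine verification you sketch. Your extra care that the Yoneda bijection agrees with the action of $\yon$ on hom-sets, and the Day-convolution shortcut via \cref{rmk:day}, are both consistent with the paper's remarks; only minor bookkeeping (the indexing of the shifts $[A_i;\,]$) needs tidying.
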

\begin{proof}
  From \cref{thm:yoneda}, we have
  \begin{align*}
    \modp(\yon_{A_1},\dots,\yon_{A_n};\yon_B) &\cong \yon_B(A_1,\dots,A_n;\,) = \cP(A_1,\dots,A_n;B)\\
    \modp(\noy{A_1},\dots,\noy{A_n};\noy B) &\cong \noy B(\,;A_1,\dots,A_n) = \cP(B;A_1,\dots,A_n).
  \end{align*}
  Functoriality is easy to check.
  % For the first (the second is dual), we have:
  % \begin{align*}
  %   \modp(\yon_{A_1},\dots,\yon_{A_n}; \yon_B)
  %   &\cong (\yon_{A_1} \hom \cdots \hom \yon_{A_n} \hom \yon_B)(\,;\,)\\
  %   &\cong (\yon_{A_1} \hom \cdots \hom \yon_B[A_n;\,])(\,;\,)\\
  %   &\cong \cdots\\
  %   &\cong \yon_B[A_1,\dots,A_n;\,](\,;\,)\\
  %   &= \cP(A_1,\dots,A_n;B)\qedhere
  % \end{align*}
  % and functoriality is easy to verify.
  % The dual statement is similar.
\end{proof}

\section{The Polycategorical Chu Construction}
\label{sec:poly-chu}

\noindent
Let \cE be a multicategory; a \textbf{presheaf} $\Bbbk$ on \cE can be defined equivalently as either:
\begin{enumerate}
\item A module (as in \cref{sec:saut}) over \cE \emph{qua} co-unary polycategory, whose only nonempty values are $\cU(\Gamma;\,)$.\label{item:pshf1}
\item An ordinary presheaf on the free symmetric strict monoidal category $F_{\ten}\cE$ generated by \cE.
\item An extension of \cE to a \emph{co-subunary polycategory} (i.e.\ all morphisms have codomain arity 0 or 1).\label{item:psh2}
\item A structured family of sets as in~\cite[\S2]{shulman:dialectica}; here we consider only set-valued presheaves.
\end{enumerate}
If $\Bbbk$ is a module, we sometimes write $\phi\in\Bbbk(\Gamma)$ as $\phi:\Gamma\to\Bbbk$.
% When \bbot is a presheaf on \cE, we write $\cE(\Gamma;\bbot)$ for the set $\bbot(\Gamma;\,)$ as in~\ref{item:pshf1}.
This is not very abusive, since by \cref{thm:yoneda-emb} the set $\Bbbk(\Gamma)$ is isomorphic to $\mod{\cE}(\yon_\Gamma;\Bbbk)$, where $\yon_{A_1,\dots,A_n} = (\yon_{A_1},\dots,\yon_{A_n})$.

% (Not quite, only in the co-subunary sense)
% Also, a representation $\Bbbk \cong \yon_\bot$ is equivalent to a counit for the co-subunary polycategory~\ref{item:psh2}.

The following multicategorical Chu construction first appeared, to my knowledge, in~\cite{shulman:dialectica}, although~\cite[Example 1.8(2)]{cks:polybicats} contains a similar construction for bicategories.
It explains the Chu tensor product~\cite{chu:constr-app} by a universal property.

\begin{defn}
  The \textbf{Chu construction} $\chu(\cE,\Bbbk)$ % of a multicategory \cE equipped with a presheaf \Bbbk 
  is the following $\ast$-polycategory:
  \begin{itemize}
  \item Its objects are triples $A=(A\p,A\m,\uA)$ where $A\p$ and $A\m$ are objects of \cE, and $\uA \in \cE(A\p,A\m;\Bbbk)$.
    We have $\d{(A\p,A\m,\uA)} = (A\m,A\p,\uA\sigma)$, where $\sigma$ denotes the permutation action.
  \item Its morphisms $f:(A_1,\dots,A_m) \to (B_1,\dots,B_n)$ are families of morphisms in \cE:
    \begin{alignat*}{2}
      f\p_j &: (A_1\p, \dots, A_m\p, B_1\m, \dots, \widehat{B_j\m},\dots B_n\m) \longrightarrow B_j\p
      &&\qquad (1 \le j \le n)\\
      f\m_i &: (A_1\p , \dots, \widehat{A_i\p},\dots A_m\p , B_1\m, \dots, B_n\m) \longrightarrow A_i\m
      &&\qquad (1\le i \le m)\\
      \uf &: (A_1\p,\dots,A_m\p,B_1\m,\dots,B_n\m) \longrightarrow \Bbbk
    \end{alignat*}
    (where hats indicate omitted entries) such that $\underline{\smash{B_j}}\circ_{B_j\p} f\p_j = \uf$ and $\underline{\smash{A_i}}\circ_{A_i\m} f\m_i = \uf$ (modulo permutations).
    If $m=n=0$, the only datum is $\uf : () \to \Bbbk$.
  \item The identity of $(A\p,A\m,\uA)$ is $(1_{A\p},1_{A\m},\uA)$; composition is induced from \cE.
  \end{itemize}
\end{defn}

\begin{thm}\label{thm:chu-rep}
  If \cE is representable and closed with pullbacks, and $\Bbbk=\yon_\ik$ is a representable presheaf, then $\chu(\cE,\Bbbk)$ is a representable polycategory, hence a $\ast$-autonomous category, and coincides with the classical Chu construction~\cite{chu:construction,chu:constr-app}.
\end{thm}
\begin{proof}
  The usual formulas $\unit = (\unit,\ik,\ell)$ and $A\ten B = (A\p\ten B\p, P, \rho)$ can be verified to have the correct universal properties, where $P$ is the pullback
  \[
    \begin{tikzcd}[baseline=-7mm]
      P \ar[r] \ar[d] \drpullback & A\p \hom B\m \ar[d] \\
      B\p \hom A\m \ar[r] & (A\p\ten B\p)\hom \ik.
    \end{tikzcd}\qedhere
  \]
\end{proof}

In addition, the multicategorical Chu construction itself has a simple universal property, which generalizes and strictifies that of~\cite{pavlovic:chu-i}.
Let \spoly denote the category of $\ast$-polycategories, and \supoly that of co-subunary polycategories. %, i.e.\ multicategories equipped with a presheaf.

\begin{thm}\label{thm:chu-adjt}
  $\chu$ is right adjoint to the forgetful $\ssU: \spoly \to \supoly$, and the adjunction is comonadic.
\end{thm}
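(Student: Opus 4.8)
The plan is to exhibit the adjunction through an explicit unit and counit and then verify comonadicity with Beck's theorem. First I would unwind the two functors. As recorded at the start of \cref{sec:poly-chu}, a co-subunary polycategory \cD is the same data as a multicategory \cE (its co-unary part) together with a presheaf \bbot on \cE (its co-nullary part), so the Chu construction of \cref{sec:poly-chu} defines a functor $\chu\colon\supoly\to\spoly$. In the other direction, $\ssU$ sends a $\ast$-polycategory \cP to the co-subunary polycategory with the same objects and the same co-unary and co-nullary hom-sets, discarding the involution on objects and all morphisms of codomain arity $\ge 2$. The point is that the discarded data are redundant: the strictly involutive duality gives for each object $A$ a canonical $\epsilon_A\in\cP(A,\d A;\,)$, the transpose of $1_A$ under $\cP(A;A)\cong\cP(A,\d A;\,)$; and every hom-set $\cP(\Gamma;\Delta)$ with $|\Delta|\ge 2$ is naturally a co-unary hom-set $\cP(\Gamma,\d{\Delta'};B)$ obtained by transposing all but one codomain object across the turnstile, an identification compatible with composition and with $\ast$-polycategory functors.

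Next I would define the unit $\eta_\cP\colon\cP\to\chu(\ssU\cP)$ by $\eta_\cP(A)=(A,\d A,\epsilon_A)$ on objects, and on a morphism $f\colon\Gamma\to\Delta$ by reading off, as the three families $f\p_j$, $f\m_i$, $\uf$ constituting a $\chu$-morphism, the various co-unary and co-nullary transposes of $f$ supplied by the duality of \cP. The two Chu compatibility equations $\underline{B_j}\circ f\p_j=\uf$ and $\underline{A_i}\circ f\m_i=\uf$ then unfold, under $\eta_\cP$, to identities of the form ``transpose one codomain object out and compose with its evaluation $\epsilon$, obtaining the full transpose'', which hold in every $\ast$-polycategory. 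To see that $\eta$ is universal, observe that the counit $\varepsilon_\cD\colon\ssU(\chu\cD)\to\cD$ takes a Chu object $(A\p,A\m,\uA)$ to $A\p$ and takes a co-subunary $\chu$-morphism to its plus-component (codomain arity $1$) or its underline-component (codomain arity $0$). Given a $\ast$-polycategory functor $\Psi\colon\cP\to\chu\cD$ one forms $\varepsilon_\cD\circ\ssU(\Psi)\colon\ssU\cP\to\cD$; conversely a co-subunary functor $\Phi\colon\ssU\cP\to\cD$ extends uniquely to a $\ast$-polycategory functor $\cP\to\chu\cD$, sending $A$ to $(\Phi A,\Phi\d A,\Phi\epsilon_A)$ and each morphism to the family obtained by applying $\Phi$ to its transposes, the Chu compatibility conditions following from functoriality of $\Phi$ and the identities just mentioned. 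Checking that these two assignments are mutually inverse, and natural in \cP and \cD, is routine bookkeeping; it establishes $\spoly(\cP,\chu\cD)\cong\supoly(\ssU\cP,\cD)$, i.e.\ $\ssU\dashv\chu$.

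For comonadicity I would apply the precise (co)monadicity theorem to $\ssU$, which has the right adjoint $\chu$ just constructed. $\ssU$ reflects isomorphisms: if $\ssU(\phi)$ is invertible then $\phi$ is bijective on objects and on co-subunary hom-sets, hence by the transposition identification bijective on all hom-sets, and its inverse again strictly preserves duals, so $\phi$ is an isomorphism in \spoly. And $\ssU$ creates equalizers: \spoly is complete (it is locally presentable), the equalizer of $f,g\colon\cP\rightrightarrows\cQ$ has object-set $\{A:fA=gA\}$ (closed under $\d{(-)}$ since $f$ and $g$ preserve it) with equalized hom-sets, and applying $\ssU$ to it yields exactly the equalizer of $\ssU f,\ssU g$ in \supoly; conversely any such equalizer in \supoly lifts uniquely to \spoly. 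In particular $\ssU$ preserves equalizers of $\ssU$-split pairs, so Beck's theorem gives that $\ssU$ --- hence the adjunction $\ssU\dashv\chu$ --- is comonadic.

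I expect the main obstacle to be the middle step: identifying a $\chu$-morphism's three families of legs with the collection of duality-transposes of a single \cP-morphism, confirming that the Chu compatibility equations are precisely the $\ast$-polycategory axioms in disguise, and verifying that composition in $\chu(\ssU\cP)$, restricted to co-subunary morphisms, reproduces composition in $\ssU\cP$ on the nose --- the ``modulo permutations'' clause in the definition of $\chu$ means the symmetric-group actions must be tracked carefully throughout. Once $\ssU$ is set up as ``restrict to codomain arity $\le 1$ and forget the involution'', the comonadicity half is entirely standard.
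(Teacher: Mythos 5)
Your adjunction argument is the same as the paper's: identical unit ($A\mapsto(A,\d A,\ep_A)$, with a poly-arrow sent to the family of its co-subunary transposes) and identical counit (extract the positive/underline components), with the same underlying observation that in a $\ast$-polycategory the co-subunary hom-sets determine everything via transposition. Where you diverge is comonadicity: the paper does not invoke Beck's theorem at all, but instead identifies the coalgebras of the induced comonad on \supoly as \emph{co-subunary $\ast$-polycategories} and cites \cite[\S7]{shulman:dialectica} for their equivalence with ordinary $\ast$-polycategories, so its proof is a two-line reduction to an external structural result. Your route --- $\ssU$ has the right adjoint \chu, reflects isomorphisms (an inverse on objects and co-subunary hom-sets propagates to all hom-sets by transposition and automatically preserves the strict duals), and creates equalizers (computed objectwise and hom-wise, the equalizing object set being closed under $\d{(-)}$ since the functors strictly preserve duals) --- is a self-contained application of the (crude) comonadicity theorem and is correct; it costs you the routine verifications you flag (compatibility of the transposition identifications and the symmetric-group actions, and that the two directions of the hom-set bijection are mutually inverse), but it avoids any appeal to the description of the coalgebras, which is the one genuinely nontrivial input in the paper's version.
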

\begin{proof}
  The counit $\ssU\chu(\cE) \to \cE$ extracts $A\p$ from $A$, $f_1\p$ from a morphism $f:(A_1,\dots,A_m) \to B_1$, and $\uf$ from a morphism $f:(A_1,\dots,A_n) \to ()$.
  The unit $\cP \to \chu(\ssU(\cP))$ sends an object $A$ to $(A,\d A, \ep_A)$, where $\ep_A \in \cP(A,\d A;\,)$ is the duality counit, and a morphism $f\in \cP(\Gamma;\Delta)$ to the family of all its co-subunary duality images.
%  The triangle identities are straightforward to verify.
  The coalgebras for the induced comonad are \emph{co-subunary $\ast$-polycategories}, which by~\cite[\S7]{shulman:dialectica} are equivalent to ordinary $\ast$-polycategories.
\end{proof}

% \begin{rmk}\label{rmk:pseudo-chu}
%   If \cE is closed with a counit \ik, the adjunction counit $\ssU\chu(\cE) \to \cE$ has a fully faithful left adjoint right inverse sending $A$ to $(A, A\hom \ik, \mathsf{ev})$.
%   This makes it sound as though the \chu comonad underlies a lax-idempotent 2-comonad~\cite{kl:property-like}, but unfortunately \chu seems not to be 2-functorial on any non-invertible 2-cells.
%   However, invertible 2-cells are enough to define a notion of \emph{pseudo \chu-coalgebra}, and a pseudo \chu-coalgebra that is also representable is essentially just a $\ast$-autonomous category (not necessarily with a strict duality).
%   Thus, when this pseudo-coalgebra adjunction is specialized to the objects that are representable, closed, and have a counit, pushouts, and pullbacks, it becomes exactly that of~\cite{pavlovic:chu-i}.
% \end{rmk}

\section{Envelopes}
\label{sec:envelopes}

Let \cP be a polycategory; we now describe Hyland's Yoneda-type embedding of \cP.

\begin{defn}[{\cite[\S5]{hyland:pfthy-abs}}]
  The \textbf{envelope} of \cP is the Chu construction
  %\footnote{Hyland remarks~\cite[footnote p28]{hyland:pfthy-abs} that the envelope ``combines the idea of a category of presheaves with that of the Chu construction''.}
  \[ \envp=\chu(\modp,\cP).\]
\end{defn}

Thus, an object of $\envp$ is two modules \cU, \cV with a module map $(\cU,\cV) \to \cP$.
By \cref{thm:chu-rep}, $\envp$ is \mbox{$\ast$-autonomous}, and contains $\modp$ as a symmetric monoidal full subcategory via $\cU \mapsto (\cU, \cU\hom\cP, \mathsf{ev})$.
Hence it also contains $\Umulti(\cP)$ as a full sub-multicategory via $A \mapsto (\yon_A, \noy{A}, \gamma_A)$, where $\gamma_A : (\yon_A, \noy{A}) \to \cP$ is composition in \cP.
In fact, Hyland showed:

\begin{thm}[{\cite[\S5]{hyland:pfthy-abs}}]\label{thm:yoneda-poly}
  The assignment $A\mapsto (\yon_A, \noy{A}, \gamma_A)$ extends to a full embedding of \emph{polycategories} $\cP \into \envp$.
\end{thm}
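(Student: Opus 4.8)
The plan is to upgrade the Yoneda embedding of multicategories from \cref{thm:yoneda-emb} to a full embedding of polycategories, landing in $\envp = \chu(\modp,\cP)$. The object assignment $A \mapsto (\yon_A, \noy{A}, \gamma_A)$ is already given, with $\gamma_A : (\yon_A,\noy A) \to \cP$ being composition in \cP; one first checks this really is an object of the Chu construction, i.e.\ that $\gamma_A \in \mod\cP(\yon_A,\noy A;\cP)$ — this is just the statement that composition along $A$ in \cP is a bilinear (module-multi-)morphism, which follows from associativity and equivariance of polycategory composition. Compatibility with duals, $\d{(\yon_A,\noy A,\gamma_A)} = (\noy A,\yon A,\gamma_A\sigma)$, holds on the nose because $\fsp\cP$ (hence $\chu$) has strictly involutive duals and composition along $A$ in one order is composition along $A$ in the other after the symmetry swap.

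Next I would compute the hom-sets. By definition of the Chu construction, a morphism $(\yon_{A_1},\dots,\yon_{A_m}) \to (\yon_{B_1},\dots,\yon_{B_n})$ in $\envp$ is a family $(f\p_j, f\m_i, \uf)$ of \emph{module} morphisms in $\modp$:
\begin{align*}
  f\p_j &: (\yon_{A_1}, \dots, \yon_{A_m}, \noy{B_1}, \dots, \widehat{\noy{B_j}},\dots,\noy{B_n}) \to \yon_{B_j},\\
  f\m_i &: (\yon_{A_1}, \dots, \widehat{\yon_{A_i}},\dots,\yon_{A_m}, \noy{B_1},\dots,\noy{B_n}) \to \noy{A_i},\\
  \uf   &: (\yon_{A_1},\dots,\yon_{A_m},\noy{B_1},\dots,\noy{B_n}) \to \cP,
\end{align*}
satisfying the Chu compatibility equations $\underline{\smash{B_j}}\circ f\p_j = \uf$ and $\underline{\smash{A_i}}\circ f\m_i = \uf$. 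Now apply the Yoneda lemmas of \cref{thm:yoneda}: iterating~\eqref{eq:yoneda4} (and its $\noy{}$-variant) collapses each multi-hom out of a tensor-list of representables into a single-variable hom into a shifted module, and then~\eqref{eq:yoneda1} evaluates it. Concretely, $\modp$ of such a list into $\yon_{B_j}$ is naturally isomorphic to $\yon_{B_j}$ evaluated on the corresponding context, which is $\cP(A_1,\dots,A_m; B_1,\dots,\widehat{B_j},\dots,B_n, B_j) \cong \cP(A_1,\dots,A_m;B_1,\dots,B_n)$ up to the symmetry; similarly $\uf$ lives in $\cP(A_1,\dots,A_m;B_1,\dots,B_n)$, and each $f\m_i$ in $\cP(A_1,\dots,A_m;B_1,\dots,B_n)$ as well. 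So \emph{each} component of the Chu datum, taken alone, is canonically an element of the single set $\cP(A_1,\dots,A_m;B_1,\dots,B_n)$.

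The crux — and the step I expect to be the main obstacle — is then to show that the Chu compatibility equations force all these components to be \emph{the same} element of $\cP(A_1,\dots,A_m;B_1,\dots,B_n)$, so that the whole family is determined by, and freely choosable as, a single polymorphism of \cP; this gives the bijection $\envp\big((\yon_{A_i});(\yon_{B_j})\big) \cong \cP(A_1,\dots,A_m;B_1,\dots,B_n)$. Unwinding, the equation $\underline{\smash{B_j}}\circ_{B_j\p} f\p_j = \uf$ says precisely that post-composing the $\yon_{B_j}$-component with $\underline{B_j} = \gamma_{B_j}: (\yon_{B_j},\noy{B_j})\to\cP$ — i.e., composing in \cP along $B_j$ with the identity $1_{B_j} \in \noy{B_j}(\,;B_j)$ absorbed via Yoneda — recovers $\uf$; but composing a polymorphism along $B_j$ with $1_{B_j}$ is the identity on it, so this equation is exactly "$f\p_j = \uf$ as elements of $\cP(\Gamma;\Delta)$". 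Likewise $\underline{A_i}\circ f\m_i = \uf$ reads "$f\m_i = \uf$". Hence the family is completely rigid: it is the tuple of all co-unary/co-subunary duality-images of a single $\uf \in \cP(A_1,\dots,A_m;B_1,\dots,B_n)$, which is precisely what the unit of the Chu adjunction (\cref{thm:chu-adjt}) does, restricted to representables. Conversely every $\uf$ extends uniquely to such a family. The naturality/functoriality claims (that this bijection respects composition and symmetry) then follow by tracking identities through \cref{thm:yoneda} exactly as in the proof of \cref{thm:yoneda-emb}, now using that composition in $\envp$ is induced componentwise from $\modp$ and matches composition in \cP under the identifications above. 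The one piece of genuine bookkeeping is matching up the symmetries: the Yoneda reduction introduces permutations moving the "extra" $B_j$ (resp.\ removing a hat) into place, and one must check these are the same permutations built into the Chu composition — but since $\fsp\cP$ embeds fully-faithfully in the ambient data and all structure is symmetric, this is routine.
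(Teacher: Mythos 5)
Your proposal is correct and follows essentially the same route as the paper: unwind a Chu morphism between the objects $(\yon_{A_i},\noy{A_i},\gamma_{A_i})$ into its family of module morphisms, apply the polycategorical Yoneda lemma (\eqref{eq:yoneda4} together with \eqref{eq:yoneda1}) to identify each component with a morphism $(A_1,\dots,A_m)\to(B_1,\dots,B_n)$ in \cP, and observe that the Chu compatibility equations say exactly that all components correspond to the same such morphism, with functoriality checked as in \cref{thm:yoneda-emb}. Your extra unwinding of why $\underline{\smash{B_j}}\circ f\p_j=\uf$ forces agreement (evaluating at identities and using that composition with $1_{B_j}$ is the identity) is just a more explicit rendering of the step the paper states in one line.
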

\begin{proof}
  A morphism $g:(\gamma_{A_1},\dots,\gamma_{A_m}) \to (\gamma_{B_1},\dots,\gamma_{B_n})$ in $\envp$ is a compatible family:
  \begin{alignat*}{2}
    g\p_j :&& (\yon_{A_1},\dots,\yon_{A_m},\noy{B_1},\dots,\widehat{\noy{B_j}},\dots,\noy{B_n}) &\to \yon_{B_j}\\
    g\m_i :&& (\yon_{A_1},\dots,\widehat{\yon_{A_i}},\dots,\yon_{A_m},\noy{B_1},\dots,\noy{B_n}) &\to \noy{A_i}\\
    \ug :&& (\yon_{A_1},\dots,\yon_{A_m},\noy{B_1},\dots,\noy{B_n}) &\to \cP.
  \end{alignat*}
  By \eqref{eq:yoneda4}, each of these is equivalent to a morphism $(A_1,\dots,A_m) \to (B_1,\dots, B_n)$ in \cP, and the compatibility conditions say they all correspond to the same such morphism.
  Functoriality is straightforward.
\end{proof}

Thus, any polycategory \cP embeds fully-faithfully in a $\ast$-autonomous category, and indeed in a Chu construction.
%Moreover, if \cP is a $\ast$-polycategory, then $\noy{A} \cong \yon_{\d A}$, so this embedding preserves duals as well (up to isomorphism).
An explicit way to extract a morphism in \cP from a morphism $g:(\gamma_{A_1},\dots,\gamma_{A_m}) \to (\gamma_{B_1},\dots,\gamma_{B_n})$ is to evaluate the component
\[ \small \begin{tikzcd} \yon_{A_1}(A_1;) \times \cdots \times \yon_{A_m}(A_m;) \times \noy{B_1}(;B_1)\times\cdots\times \noy{B_n}(;B_n) \to \cP(A_1,\dots,A_m;B_1,\dots,B_n) 
  \end{tikzcd}
\]
of \ug at the identities $(1_{A_1},\dots,1_{A_m},1_{B_1},\dots,1_{B_n})$.

\begin{rmk}\label{rmk:isbell}
  If \cP is an ordinary category regarded as a unary co-unary polycategory, then the categories $[\cP\op,\fSet]$ and $[\cP,\fSet]$ are equivalent to the categories of modules whose only nonempty values have the form $\cU(A;\,)$ and $\cU(\,;A)$, respectively.
  The functor $(-)\hom \cP$ restricts to Isbell conjugation interchanging these two subcategories, and the Hyland envelope contains the Isbell envelope~\cite{isbell:soc}.
\end{rmk}

\section{Preserving Tensors and Cotensors}
\label{sec:sheaves}

Hyland's envelope does not preserve any tensor or cotensor products that exist in \cP,
but we can modify it to do so along the lines of~\cite{fk:cts-frs} and~\cite[\S3.12]{kelly:enriched}.

Suppose \cP is equipped with a set \cJ of tensor and cotensor products that exist (potentially including the nullary cases of a unit and/or counit), which we call \textbf{distinguished}.
Our intended example is $\cP=\fsp\cC$ from \cref{sec:starpoly}, with the tensor products and unit coming from \cC, and the cotensor products $(\Abar \coten B) = (A\hom B)$.

\begin{defn}
  A $\pj$-\textbf{module} is a module that respects the distinguished tensor and cotensor products.
  E.g.\ if $(A\ten B) \in \cJ$, the induced maps such as
  \( \cU(\Gamma,A\ten B;\Delta) \to \cU(\Gamma,A,B;\Delta)\)
   are isomorphisms.
\end{defn}

Let $\modpj\subseteq \modp$ consist of the $\pj$-modules.
Of course, \cP is a $\pj$-module, as is any shift of a $\pj$-module; thus $\yon_A,\noy{A}\in \modpj$.

\begin{thm}\label{thm:sh-emb}
  The embedding $\yon : \Umulti(\cP)\to\modpj$ preserves distinguished tensor products.
  Dually, the embedding $ \noy{} : \Umulti(\cP\op) \to \modpj$ takes distinguished cotensor products (which are tensor products in $\cP\op$) to tensor products.
\end{thm}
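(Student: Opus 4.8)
The plan is to use the characterization of $\modpj$ as a reflective subcategory of $\modp$, prove that the reflector $L\colon\modp\to\modpj$ is (strong) monoidal, and then observe that the composite of $\yon\colon\Umulti(\cP)\to\modp$ with $L$ is $\yon\colon\Umulti(\cP)\to\modpj$ on the nose (since representable modules are already $\pj$-modules). First I would check that $\modpj\subseteq\modp$ is closed under limits and under the enriched hom $\cV[\Gamma;\Delta]$ — the defining isomorphisms for a $\pj$-module involve comparison maps into shifted values, which are stable under pointwise limits, and the internal-hom of a $\pj$-module is again a $\pj$-module because the relevant distinguished isomorphisms can be checked in the target. Local presentability of $\modp$ then gives a reflector $L$, so $\modpj$ is a closed symmetric monoidal category and the inclusion is (lax) monoidal with $L$ strong monoidal, exactly as in~\cite[\S3.12]{kelly:enriched}.

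Next I would compute the tensor product in $\modpj$ as $L$ applied to the Day-convolution tensor of $\modp$. For the distinguished tensor products of $\cP$: if $(A\ten B)\in\cJ$, the Yoneda lemma~\eqref{eq:yoneda4} shows $\yon_{A\ten B}$ represents the functor $\cV\mapsto\modp(\,;\cV[A\ten B;\,])$, while $\yon_A\ten\yon_B$ (the $\modp$-tensor) represents $\cV\mapsto\modp(\,;\cV[A;\,][B;\,])=\modp(\,;\cV[A,B;\,])$. On $\pj$-modules $\cV$, the distinguished isomorphism $\cV[A\ten B;\,]\iso\cV[A,B;\,]$ identifies these two functors, so $L(\yon_A\ten\yon_B)\iso\yon_{A\ten B}$, i.e.\ $\yon$ sends the distinguished tensor $A\ten B$ of $\cP$ to the tensor product of $\yon_A$ and $\yon_B$ in $\modpj$. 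The nullary case (the unit) is the same argument with $\cV[\unit;\,]\iso\cV$. The dual statement for $\noy{}$ and cotensor products follows by applying the first part to $\cP\op$, under which distinguished cotensors become distinguished tensors, $\noy{A}$ becomes $\yon_A$, and $\modpj$ is self-dual in the appropriate sense via $\cU\mapsto\cU\hom\cP$ (so that $\modpj$-tensors on the $\noy{}$ side match $\modpj[\cP\op]$-tensors).

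I expect the main obstacle to be verifying cleanly that $\modpj$ is closed under the internal-hom of $\modp$ (equivalently, that $L$ is strong and not merely lax monoidal), since this is what makes the reflector monoidal and hence makes $L$ preserve the representing objects. Concretely: one must show that for a $\pj$-module $\cV$ and arbitrary module $\cU$, the module $\cU\hom\cV$ again satisfies the distinguished isomorphisms; this reduces to the fact that a distinguished (co)tensor isomorphism in $\cV$ induces the corresponding isomorphism on $(\cU\hom\cV)[\Gamma;\Delta]=\modp(\cU,\cV[\Gamma;\Delta])$ because shifts commute and $\cV[\Gamma;\Delta]$ is still a $\pj$-module. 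Once that is in hand the rest is the formal Day-convolution / reflective-localization bookkeeping of~\cite{fk:cts-frs,kelly:enriched}, together with the Yoneda isomorphisms of \cref{thm:yoneda}.
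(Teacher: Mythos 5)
Your argument is correct in substance, but it takes a genuinely different and heavier route than the paper. The paper proves this statement \emph{before} establishing that $\modpj$ is reflective or monoidal: it simply verifies the multicategorical universal property directly. For an arbitrary list $\Gamma$ of $\pj$-modules and a $\pj$-module $\cU$, the Yoneda isomorphism~\eqref{eq:yoneda4} gives $\modp(\Gamma,\yon_A,\yon_B;\cU)\cong\modp(\Gamma;\cU[A,B;\,])\cong\modp(\Gamma;\cU[A\ten B;\,])\cong\modp(\Gamma,\yon_{A\ten B};\cU)$, the middle step being the defining property of a $\pj$-module; this exhibits $\yon_{A\ten B}$ as a tensor product $\yon_A\ten\yon_B$ in the multicategory $\modpj$ with no need for representability of $\modpj$, the reflector, or Day's reflection theorem. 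Your route instead imports what is essentially \cref{thm:mod-ei} (local presentability, closure under internal-homs, strong monoidality of the reflector) and then identifies $L(\yon_A\ten\yon_B)$ with $\yon_{A\ten B}$ by comparing represented functors on $\pj$-modules. This works and is not circular (the reflection results do not depend on the present theorem), and it buys you the explicit description $\yon_A\tenj\yon_B\cong\yon_{A\ten B}$ inside the monoidal category $\modpj$; but note that your representability comparison is stated only for empty context, so you are implicitly leaning on closedness of $\modpj$ to upgrade it to the universal property with arbitrary $\Gamma$, and you should also remark that the identification of functors is induced by composition with the universal morphism $(A,B)\to A\ten B$, so that the resulting isomorphism really is the canonical comparison map. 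The one misstep is in your treatment of the dual half: Isbell conjugation $\cU\mapsto(\cU\hom\cP)$ is \emph{not} an equivalence between $\modpj$ and modules over $\cP\op$ (its unit and counit are isomorphisms only on special objects such as representables, cf.\ \cref{thm:dual-rep}), so ``self-duality via $\cU\mapsto\cU\hom\cP$'' is not a valid reduction. Fortunately none is needed: a $\cP$-module is literally the same thing as a $\cP\op$-module with inputs and outputs exchanged, under which $\noy{A}$ becomes $\yon_A$ and distinguished cotensors become distinguished tensors; equivalently, one just runs the same three-line Yoneda computation with $\cU[\,;A,B]\cong\cU[\,;A\coten B]$ to see that $\noy{(A\coten B)}$ is a tensor product $\noy{A}\ten\noy{B}$.
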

\begin{proof}
  Let \cU be a $\pj$-module and $\Gamma$ a list of $\pj$-modules. % (or indeed any \cP-modules).
  Then by~\eqref{eq:yoneda4} and the assumption on \cU we have natural bijections:
  \begin{equation*}
    \modp(\Gamma,\yon_A,\yon_B\mathbin;\cU)
    \cong \modp(\Gamma\mathbin; \cU[A,B;\,])
    \cong \modp(\Gamma\mathbin ; \cU[A\ten B;\,])
    \cong \modp(\Gamma,\yon_{A\ten B}\mathbin; \cU).
  \end{equation*}
  % \begin{align*}
  %   \modp(\Gamma,\yon_A,\yon_B\mathbin;\cU)
  %   &\cong \modp(\Gamma\mathbin; \cU[A,B;\,])\\
  %   &\cong \modp(\Gamma\mathbin ; \cU[A\ten B;\,])\\
  %   &\cong \modp(\Gamma,\yon_{A\ten B}\mathbin; \cU).
  % \end{align*}
  % \[\def\arraystretch{1.2}
  %   \begin{array}{rcl}
  %     (\Gamma,\yon_A,\yon_B) &\to& \cU\\ \hline
  %     % (\Gamma,\yon_A) &\to& (\yon_B\hom \cU)\\ \hline
  %     % (\Gamma,\yon_A) &\to& \cU[B;\,]\\ \hline
  %     % \Gamma &\to& (\yon_A \hom \cU[B;\,])\\ \hline
  %     \Gamma &\to& \cU[A,B;\,]\\ \hline
  %     \Gamma &\to& \cU[A\ten B;\,] \\ \hline % & \text{(because \cU is a $\pj$-module)}\\
  %     % \Gamma &\to& (\yon_{A\ten B} \hom \cU)\\ \hline
  %     (\Gamma, \yon_{A\ten B}) &\to& \cU.
  %   \end{array}
  % \]
  % In the penultimate line we use that \cU is a $\pj$-module.
  Thus, $\yon_{A\ten B}$ is a tensor product $\yon_A \ten \yon_B$. % in $\modpj$.
  % \begin{alignat*}{2}
  %   \modpj(\Gamma,\yon_A,\yon_B;\cU)
  %   &\cong \modpj(\Gamma,\yon_A;\yon_B\hom \cU)\\
  %   &\cong \modpj(\Gamma,\yon_A;\cU[B;\,])\\
  %   &\cong \modpj(\Gamma;\yon_A \hom \cU[B;\,])\\
  %   &\cong \modpj(\Gamma;\cU[A,B;\,])\\
  %   &\cong \modpj(\Gamma;\cU[A\ten B;\,]) &\qquad\text{(because \cU is a $\pj$-module)}\\
  %   &\cong \modpj(\Gamma; \yon_{A\ten B} \hom \cU)\\
  %   &\cong \modpj(\Gamma, \yon_{A\ten B}; \cU).
  % \end{alignat*}
  The dual statement is similar.
\end{proof}

%And as for ordinary categories, we have the following.

\begin{thm}\label{thm:mod-ei}
  \begin{enumerate}
  \item $\modpj$ is a reflective subcategory of $\modp$.\label{item:ei1}
  \item If $\cU\in\modp$ and $\cV\in \modpj$ then $(\cU\hom\cV)\in\modpj$.\label{item:ei2}
  \item $\modpj$ has a closed symmetric monoidal structure such that the reflector is strong monoidal and the inclusion preserves internal-homs.\label{item:ei3}
  \end{enumerate}
\end{thm}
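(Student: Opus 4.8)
The plan is to exhibit $\modpj$ as a reflective localization of the presheaf category $\modp$, check that it is an exponential ideal (closed under $\modp$-internal-homs), and then run the standard machinery for monoidal reflective localizations, following the lines of~\cite{fk:cts-frs} and~\cite[\S3.12]{kelly:enriched}. The one point that needs thought is the first: confirming that ``being a $\pj$-module'' is an \emph{orthogonality} condition with respect to a \emph{set} of morphisms, so that reflectivity is automatic rather than requiring a hand-built sheafification.

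\emph{Reflectivity.} By \cref{rmk:day}, $\modp$ is the category of set-valued presheaves on $F_\ten\Umulti\fsp\cP$, in particular locally presentable. Each distinguished object of $\cJ$ carries a universal morphism in $\cP$: for a distinguished tensor $A\ten B$, the map $m\in\cP(A,B;A\ten B)$, which is co-unary and hence lives in $\Umulti\fsp\cP$; for a distinguished cotensor $A\coten B$, the map $e\in\cP(A\coten B;A,B)$, which under the defining isomorphism $\fsp\cP(\Abar,\Bbar;\overline{A\coten B})=\cP(A\coten B;A,B)$ becomes a co-unary morphism $\Abar,\Bbar\to\overline{A\coten B}$ exhibiting $\overline{A\coten B}$ as the tensor product $\Abar\ten\Bbar$ of $\fsp\cP$; and there are nullary analogues for a distinguished unit and counit. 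Transporting these along $\Umulti\fsp\cP\to F_\ten\Umulti\fsp\cP$, tensoring with arbitrary $\Gamma$ and $\overline\Delta$, and applying the Yoneda embedding produces a set $S$ of morphisms of representable presheaves. Unwinding definitions, a module $\cU$ is orthogonal to $S$ precisely when each of the induced maps, such as $\cU(\Gamma,A\ten B;\Delta)\to\cU(\Gamma,A,B;\Delta)$, is a bijection, i.e.\ precisely when $\cU$ is a $\pj$-module. Since orthogonality classes for a set of maps in a locally presentable category are reflective, the inclusion $i:\modpj\hookrightarrow\modp$ has a left adjoint $L$.

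\emph{Exponential ideal.} Let $\cU\in\modp$, $\cV\in\modpj$, and fix a distinguished tensor $A\ten B$. For every $\Gamma,\Delta$ there is a canonical isomorphism of modules $\cV[\Gamma,A\ten B;\Delta]\cong\cV[\Gamma,A,B;\Delta]$ — this is exactly the defining condition on $\cV$, applied at the positions $A,B$ after the shift by $[\Gamma;\Delta]$. Applying $\modp(\cU;-)$ to it and recalling that $(\cU\hom\cV)(\Gamma;\Delta)=\modp(\cU;\cV[\Gamma;\Delta])$ (proof of \cref{thm:mod-smcc}), we obtain a bijection $(\cU\hom\cV)(\Gamma,A\ten B;\Delta)\cong(\cU\hom\cV)(\Gamma,A,B;\Delta)$, natural and compatible with the module structure; distinguished cotensors, units and counits are handled identically. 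Hence $(\cU\hom\cV)\in\modpj$, so $\modpj$ is an exponential ideal of $\modp$.

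\emph{Monoidal structure.} With reflectivity and the exponential-ideal property in hand, the conclusion is the standard output of the reflection theorem. Take the internal-hom of $\modpj$ to be that of $\modp$ (which lands in $\modpj$ by the previous step, so the inclusion preserves internal-homs), put $\cU\tenj\cV:=L(i\cU\ten i\cV)$ where $\ten$ is the tensor of $\modp$, and let $L\cI$ be the unit. The exponential-ideal property is precisely what is needed to see that $L$ inverts the maps $\cW\ten(i\cU\to iL\cU)$ and $(i\cU\to iL\cU)\ten\cW$, hence that $L$ is strong monoidal; the associativity, symmetry and unit coherences of $\modpj$ are then transported along $L$ from those of $\modp$, and the restricted internal-hom is right adjoint to $\tenj$ because $i$ is fully faithful. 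In particular $\yon_A,\noy A\in\modpj$ (as shifts of the $\pj$-module $\cP$), and this is the closed symmetric monoidal structure for which the embeddings of \cref{thm:sh-emb} preserve distinguished (co)tensors. The only genuinely delicate point is the one flagged at the outset — turning the universal maps of distinguished cotensors and counits into morphisms of $F_\ten\Umulti\fsp\cP$ via the $\overline{A\coten B}=\Abar\ten\Bbar$ dictionary of \cref{sec:starpoly} and \cref{rmk:day}; once $S$ is assembled, the rest is formal.
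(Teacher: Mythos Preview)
Your proof is correct and follows essentially the same route as the paper. The paper is merely terser: for (i) it observes that $\modpj$ is closed under limits and accessibly embedded in the locally presentable $\modp$ and invokes the adjoint functor theorem directly, rather than exhibiting $\modpj$ as an orthogonality class as you do; for (ii) it makes the same observation about $\cV[\Gamma;\Delta]$; and for (iii) it simply cites Day's reflection theorem~\cite{day:refl-closed} rather than spelling out the exponential-ideal argument.
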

\begin{proof}
  For a list of objects $\Gamma$, write $\yon_\Gamma$ for the tensor product of all their representable modules $\yon_A$, and similarly $\noy{\Gamma}$ for the tensor product (not a cotensor product!\  $\modp$ has no cotensors) of their dual representables $\noy{A}$.
  In particular, $\yon_\Gamma \ten \noy{\Delta}$ is the ordinary representable presheaf at the object $(\Gamma,\overline\Delta) \in F_{\ten} \Umulti \fsp \cP$, so by the ordinary Yoneda lemma, a morphism $\yon_\Gamma \ten \noy{\Delta} \to \cU$ is the same as an element of $\cU(\Gamma;\Delta)$.
  It follows that \cU respects a tensor product $A\ten B$ if and only if every morphism $\yon_{\Gamma,A,B} \ten \noy{\Delta} \to \cU$ extends uniquely to a morphism $\yon_{\Gamma,A\ten B} \ten \noy{\Delta} \to \cU$:
  \[
    \begin{tikzcd}
      \yon_{\Gamma,A,B} \ten \noy{\Delta} \ar[r] \ar[d] & \cU\\
      \yon_{\Gamma,A\ten B} \ten \noy{\Delta} \ar[ur,dotted,"\exists!"']
    \end{tikzcd}
  \]
  and similarly for cotensor products.
  Thus, $\modpj$ is a \textbf{small-orthogonality class} in $\modp$, so its reflectivity follows from a standard ``small object argument'' iterative construction as in~\cite[1.36--1.38]{ar:loc-pres}.
  (Since $\modp$ is locally finitely presentable and the objects $\yon_{\Gamma,A,B} \ten \noy{\Delta}$ and $\yon_{\Gamma,A\ten B} \ten \noy{\Delta}$ are finitely presentable, this construction requires only countably many steps and is fully constructive.)

  This proves~\ref{item:ei1}.
  Now by definition
  \begin{equation*}
    (\cU \hom \cV)(\Gamma;\Delta) = \modp(\cU, \cV[\Gamma;\Delta]),
  \end{equation*}
  so~\ref{item:ei2} follows since $\cV$ is a $\pj$-module.
  Finally,~\ref{item:ei3} follows formally~\cite{day:refl-closed}; the tensor product $\tenj$ of $\modpj$ is the reflection of that of $\modp$.
\end{proof}

\begin{eg}
  By the formula for $\cU\ten \cV$ on p28 of~\cite{hyland:pfthy-abs}, any elements $u\in \cU(A;\,)$ and $v\in \cV(B;\,)$ induce an element of $(\cU\ten\cV)(A,B;\,)$, hence of $(\cU\tenj\cV)(A\ten B; \,)$ and thence $(\cU\tenj\cV)(C; \,)$ for any $C\to A\ten B$ in \cP.
  Thus we have a map
  \[ \cP(C, A\ten B) \times \cU(A;\,) \times \cV(B;\,) \too (\cU\tenj \cV)(C;\,) \]
  showing that $\tenj$ is similar to Day convolution~\cite{day:closed}.
%  However, unlike Day convolution for ordinary monoidal categories, we do not have a closed form expression for $\tenj$ as a single colimit.
\end{eg}

\begin{defn}
  %If \cP is a polycategory equipped with some distinguished tensor and cotensor products \cJ,
  Given $\pj$, its \textbf{envelope} is the Chu construction
  \[ \envpj=\chu(\modpj,\cP).\]
\end{defn}

% Like $\envp$ and any Chu construction, $\envpj$ is $\ast$-autonomous.
% Moreover:

\begin{thm}\label{thm:shenv}
  $\envpj$ is $\ast$-autonomous, contains \cP as a full sub-polycategory, and the inclusion preserves the distinguished tensor and cotensor products.
  %Moreover, if \cP is a $\ast$-polycategory, the inclusion preserves duals.
\end{thm}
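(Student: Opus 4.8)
The plan is to read off all three assertions from the machinery of \cref{sec:saut,sec:sheaves}. For $\ast$-autonomy I would apply the representability criterion for Chu constructions from \cref{sec:poly-chu} to $\chu(\modpj,\cP)$: its hypotheses hold because \cref{thm:mod-ei} makes $\modpj$ closed symmetric monoidal (hence representable and closed as a multicategory) and, being a reflective subcategory of the locally presentable $\modp$ that is closed under limits, complete --- so in particular it has pullbacks --- while $\cP$, being a $\pj$-module, lies in $\modpj$, so the dualizing object $\bbot=\yon_\cP$ is representable there. Hence $\envpj=\chu(\modpj,\cP)$ is representable, i.e.\ $\ast$-autonomous.

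For the full embedding of \emph{polycategories} $\cP\hookrightarrow\envpj$ I would rerun the proof of \cref{thm:yoneda-poly} inside $\modpj$. The only new point is that the Yoneda isomorphism \eqref{eq:yoneda4} is still available: for $\pj$-modules $\Gamma$ and $\cV$ one has $\modpj(\Gamma,\yon_A;\cV)=\modp(\Gamma,\yon_A;\cV)\cong\modp(\Gamma;\cV[A;\,])=\modpj(\Gamma;\cV[A;\,])$, using that $\modpj$ is a full sub-multicategory and that a shift of a $\pj$-module is again a $\pj$-module (and dually for $\noy A$). Then a morphism of $\envpj$ between objects $(\yon_{A_i},\noy{A_i},\gamma_{A_i})$ unwinds, exactly as in \cref{thm:yoneda-poly}, into a compatible family of $\modpj$-morphisms each of which \eqref{eq:yoneda4} identifies with one and the same morphism of $\cP$; functoriality is routine. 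This also shows that $\cP$ sits inside $\envpj$ via $A\mapsto(\yon_A,\noy A,\gamma_A)$.

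For preservation of distinguished tensors and cotensors I would factor this embedding through $\modpj$. As in the unrefined case, $\chu(\modpj,\cP)$ contains $\modpj$ as a \emph{strong} monoidal full subcategory via $\cU\mapsto(\cU,\cU\hom\cP,\mathsf{ev})$ --- strong because the two legs of the pullback computing the tensor product of two such objects are the currying isomorphisms, so that pullback is just $(\cU\tenj\cV,(\cU\tenj\cV)\hom\cP)$. Using \cref{thm:dual-rep} (which holds in $\modpj$ by \cref{thm:mod-ei}) to identify $\yon_A\hom\cP\cong\noy A$ with $\mathsf{ev}$ corresponding to $\gamma_A$, this inclusion precomposed with $\yon\colon\Umulti(\cP)\to\modpj$ recovers, on underlying multicategories, the embedding of the previous paragraph. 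Since $\envpj$ is $\ast$-autonomous, to see that it preserves a distinguished tensor $A\ten B$ it suffices to check that the canonical comparison makes the image of $A\ten B$ into the tensor product of the images; but that tensor product is formed in the strong monoidal subcategory as the image of $\yon_A\tenj\yon_B$, and \cref{thm:sh-emb} gives $\yon_A\tenj\yon_B\cong\yon_{A\ten B}$, whose image under the inclusion is (again by \cref{thm:dual-rep}) precisely the image of $A\ten B$; the nullary instance covers a distinguished unit. For cotensors one uses that $\chu$ is self-dual under $\d{(A\p,A\m,\uA)}=(A\m,A\p,\uA\sigma)$: the cotensor $X\coten Y=\d{(\d X\ten\d Y)}$ of the images of $A$ and $B$ is governed by $\d X=(\noy A,\yon_A,\gamma_A\sigma)$, which is the image of the $\pj$-module $\noy A$ under the same inclusion, so the dual half of \cref{thm:sh-emb} ($\noy A\tenj\noy B\cong\noy{A\coten B}$ when $A\coten B$ is distinguished), followed by a final application of $\d{}$, identifies $X\coten Y$ with the image of $A\coten B$.

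The substantive content is all contained in \cref{thm:mod-ei,thm:sh-emb}, which are already proved; what is left for \cref{thm:shenv} is bookkeeping, and that is where the argument could go wrong. I expect the main care to be needed in two places: first, verifying that the Chu structure data --- the pullback projection $\rho$ of the tensor formula, the various $\mathsf{ev}$ maps, and the symmetry twists buried in $\d{(-)}$ --- genuinely agree with the composition maps $\gamma$ of $\cP$, so that the objects in question are \emph{literally} the images of (co)tensor products and the comparison maps are the canonical ones, not merely some abstract isomorphism; and second, tracking scrupulously whether each internal-hom or tensor product is formed in $\modpj$ or in $\modp$ --- by \cref{thm:mod-ei} this is harmless, but it is exactly the kind of slip that silently derails such an argument.
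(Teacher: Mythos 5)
Your proposal is correct and follows essentially the same route as the paper: factor the embedding through $\modpj$, use \cref{thm:sh-emb} for distinguished tensors, and handle cotensors via duality together with \cref{thm:dual-rep} identifying the covariant and contravariant embeddings. The only cosmetic difference is that the paper gets fullness more quickly by noting $\envpj$ is a full sub-polycategory of $\envp$ (since $\modpj$ is full in $\modp$) and invoking \cref{thm:yoneda-poly}, whereas you rerun the Yoneda argument inside $\modpj$; both work, and your explicit checks of pullbacks in $\modpj$ and strong monoidality of $\modpj\into\envpj$ are details the paper leaves implicit.
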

\begin{proof}
  Since $\modpj$ is a full sub-multicategory of $\modp$, $\envpj$ is a full sub-polycategory of $\envp$.
  Since it contains the image of \cP, which is a full sub-polycategory of $\envp$ by \cref{thm:yoneda-poly}, the inclusion $\cP \into \envpj$ is also full.

  Now the embedding of $\modpj$ in $\envpj$, like that of any monoidal category in its Chu construction, preserves tensor products.
  Since the embedding of $\Umulti(\cP)$ in $\modpj$ preserves distinguished tensor products by \cref{thm:sh-emb}, so does the composite $\Umulti(\cP) \into \modpj\into \envpj$.
  Dually, the composite embedding % of $\modpj\op$ in $\envpj$ takes tensor products to cotensor products, while by \cref{thm:sh-emb} the embedding of $\Umulti(\cP\op)$ in $\modpj$ takes distinguished cotensor products to tensor products.
  $\Umulti(\cP) \into \modpj\op \into \envpj$ preserves distinguished cotensor products, and by \cref{thm:dual-rep} the two embeddings coincide.
\end{proof}

Combining \cref{thm:smcc-poly,thm:shenv}, we obtain our first new embedding theorem.

\begin{thm}\label{thm:main}
  Any closed symmetric monoidal category \cC can be fully embedded in a $\ast$-autonomous category by a strong symmetric monoidal closed functor.
\end{thm}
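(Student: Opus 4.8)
The plan is to combine \cref{thm:smcc-poly} with \cref{thm:shenv}, applied to $\cP=\fsp\cC$. First I would use \cref{thm:smcc-poly} to produce a $\ast$-polycategory $\fsp\cC$ and a functor $F_0\colon\cC\to\Umulti(\fsp\cC)$ that is fully faithful, preserves tensor products and the unit, and sends each internal-hom $A\hom B$ to the cotensor product $\Abar\coten B$. Then, taking $\cP\mathrel{:=}\fsp\cC$, let $\cJ$ be the set of distinguished tensor and cotensor products given by the images under $F_0$ of the tensors and unit of $\cC$ together with the cotensors $\Abar\coten B=A\hom B$ --- this is exactly the intended example singled out at the start of \cref{sec:sheaves}, so that $\yon_A$, $\noy A$, and $\fsp\cC$ itself are all $\pj$-modules.

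Next I would apply \cref{thm:shenv}: the envelope $\envpj$ is $\ast$-autonomous, contains $\fsp\cC$ as a full sub-polycategory via an inclusion $\iota$, and $\iota$ preserves the distinguished tensor and cotensor products. As $\iota$ is full and faithful on all polycategorical hom-sets, in particular on the co-unary ones, $\Umulti(\iota)$ is a full embedding of multicategories; hence the composite $F\mathrel{:=}\Umulti(\iota)\circ F_0\colon\cC\to\Umulti(\envpj)$ is fully faithful. Being $\ast$-autonomous, $\envpj$ is in particular a closed symmetric monoidal category, so $\Umulti(\envpj)$ is representable and closed.

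It then remains to see that $F$ is strong symmetric monoidal and closed. Preservation of tensor products and the unit is inherited from $F_0$ (\cref{thm:smcc-poly}) and from $\iota$ (\cref{thm:shenv}, since the images of the tensors and unit of $\cC$ were put into $\cJ$); with source and target both representable, this is precisely the data of a strong symmetric monoidal functor $\cC\to\envpj$. For the closed structure, $F$ carries $A\hom B$ first to $\Abar\coten B$ and then, $\iota$ preserving this cotensor and all duals, to $\d{FA}\coten FB$ in $\envpj$; and by the universal properties of a representable $\ast$-polycategory recalled in \cref{sec:starpoly}, in any $\ast$-autonomous category $\d X\coten Y$ is the internal-hom $X\hom Y$, with $\envpj(\Gamma,X;Y)\iso\envpj(\Gamma;\d X\coten Y)$ naturally in $\Gamma$. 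Hence $F(A\hom B)$ represents $FA\hom FB$ compatibly with evaluation, the canonical comparison $F(A\hom B)\to FA\hom FB$ is invertible, and $F$ is strong closed; so $F$ is the required fully faithful strong symmetric monoidal closed functor. Granting the two cited theorems the argument is essentially bookkeeping, and the one step I would handle carefully is this last one: checking that ``sends internal-homs to cotensor products'' combined with ``$\iota$ preserves cotensor products'' really delivers strong closedness (an invertible comparison map), and not merely an object-level isomorphism $F(A\hom B)\iso FA\hom FB$ --- which is what the universal-property computation above ensures.
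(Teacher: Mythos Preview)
Your proposal is correct and follows essentially the same approach as the paper: combine \cref{thm:smcc-poly} with \cref{thm:shenv} applied to $\cP=\fsp\cC$ with $\cJ$ the tensors, unit, and cotensors $\Abar\coten B$ coming from $\cC$. The paper's proof is terse (three sentences), and your additional care in verifying that the composite is strong symmetric monoidal closed---in particular that the canonical comparison $F(A\hom B)\to FA\hom FB$ is invertible rather than merely that the two objects are abstractly isomorphic---fills in detail the paper leaves implicit.
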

\begin{proof}
  By \cref{thm:smcc-poly}, \cC embeds fully in $\fsp\cC$ preserving tensor products and taking $A\hom B$ to $\Abar \coten B$.
  Let \cJ consist of these tensor and cotensor products, and embed $\fsp\cC$ in the $\ast$-autonomous category $\env{(\fsp\cC,\cJ)}$, which by \cref{thm:shenv} preserves these tensor and cotensor products.
\end{proof}

\begin{rmk}
  The use of $\fsp\cC$ is not strictly necessary: we have $\mod{\fsp\cC} \simeq \mod{\cC}$, and the preservation of cotensors $\Abar \coten B$ can be expressed directly in terms of a $\cC$-module as a preservation of internal-homs, $\cU(\Gamma;\Delta,A\hom B) \cong \cU(\Gamma,A;\Delta,B)$.
  However, it is convenient to prove \cref{thm:shenv} by separate reduction to \cref{thm:sh-emb} and its dual, which requires expressing internal-homs as cotensor products.
\end{rmk}

\section{Preserving Limits and Colimits}
\label{sec:limits-colim}

% While the embedding of \cP in $\envpj$ preserves the distinguished tensor and cotensor products, it preserves neither limits nor colimits.
% But again, we can force it to do so in a similar way.

By a \textbf{limit} in a polycategory \cP we mean a cone of unary morphisms such that
\[ \cP(\Gamma; \Delta,\lim_i A_i) \to \lim_i \cP(\Gamma;\Delta,A_i) \]
is always an isomorphism.
A \textbf{colimit} is a limit in $\cP\op$.
% Dually, a \textbf{colimit} has the property that
% \[ \cP(\Gamma, \colim_i A_i; \Delta) \to \lim_i \cP(\Gamma,A_i;\Delta) \]
% is an isomorphism.
Since limits of modules are pointwise,
\[ \yon_{(\lim_i A_i)} \cong \lim_i (\yon_{A_i}) \qquad\text{and}\qquad
\noy{(\colim_i A_i)} \cong \lim_i (\noy{A_i}),
\]
but in $\modp$ or $\modpj$ we can say nothing about $\yon_{(\colim_i A_i)}$ or $\noy{(\lim_i A_i)}$.
Now let $\pj$ be as in \cref{sec:sheaves}, and \cK a set of {distinguished} limit and colimit cones in \cP.

\begin{rmk}
  We have been ignoring size, but \cP should everywhere be a \emph{small} polycategory, and \cK should be a small set.
  (\cJ is automatically small once \cP is.)
\end{rmk}

\begin{defn}
  A \textbf{$\pjk$-module} is a $\pj$-module that in addition respects the distinguished limits and colimits in \cK, i.e.\ the map
  \[ \cU(\Gamma; \Delta,\lim_i D_i) \to \lim_i \cU(\Gamma;\Delta,D_i) \]
  is an isomorphism for all distinguished limit cones, and dually for colimits.
\end{defn}

The tautological module \cP is a $\pjk$-module, as is any shift of a $\pjk$-module; hence so are $\yon_A$ and $\noy{A}$.
Let $\modpjk\subseteq\modp$ consist of the $\pjk$-modules.

%Let $\Ucat$ denote the forgetful functor from polycategories or multicategories to categories, and 

\begin{thm}\label{thm:ssh-yon}\ 
  \begin{enumerate}
  \item $\modpjk$ is a reflective subcategory of $\modp$.\label{item:eis1}
  \item If $\cU\in\modp$ and $\cV\in \modpjk$ then $(\cU\hom\cV)\in\modpjk$.
  \item $\modpjk$ has a closed symmetric monoidal structure such that the reflector is strong monoidal and the inclusion preserves internal-homs.
  \item The embedding $\yon : \Umulti(\cP) \to \modpjk$ preserves distinguished tensor products.
    Dually, the embedding $\noy{} : \Umulti(\cP\op)\to \modpjk$ takes distinguished cotensor products to tensor products.\label{item:eis4}
  \item The embeddings $\yon$ % $ : \Umulti(\cP) \to \modpjk$
    and $\noy{}$ % $ : \Umulti(\cP\op)\to \modpjk$
    preserve distinguished limits and colimits.\label{item:eis5}
  \end{enumerate}
\end{thm}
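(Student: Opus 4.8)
The plan is to treat parts~\ref{item:eis1} through~\ref{item:eis4} as routine upgrades of \cref{thm:mod-ei} and \cref{thm:sh-emb}, and to put the real work into part~\ref{item:eis5}. For~\ref{item:eis1}, I would view modules as presheaves on $F_\ten\Umulti\fsp\cP$ as in \cref{rmk:day}; the $\pjk$-modules are then exactly the presheaves orthogonal to a small set of maps between representables (one for each distinguished tensor, cotensor, limit, or colimit, together with a choice of $\Gamma,\Delta$), so $\modpjk$ is a reflective, locally presentable subcategory of $\modp$ by the standard theory of orthogonality classes, and it is closed under limits in $\modp$ since limits commute with limits. Part~\ref{item:eis2} I would prove as in \cref{thm:mod-ei}: $\cV[\Gamma;\Delta]$ is again a $\pjk$-module, hence shifting commutes with the distinguished (co)limits, and $\modp(\cU,-)$ preserves limits. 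Then part~(iii) drops out formally via~\cite{day:refl-closed}, with $\tenjk$ the reflection of $\ten$. Part~\ref{item:eis4} I would prove by literally repeating the argument of \cref{thm:sh-emb}, since every $\pjk$-module is in particular a $\pj$-module and $\modpjk$ is a full sub-multicategory of $\modp$.

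For~\ref{item:eis5} I would split into two halves. The easy half---that $\yon$ preserves distinguished \emph{limit} cones and $\noy{}$ preserves distinguished \emph{colimit} cones of \cP---does not even use \cK: the Remark preceding the theorem already gives $\yon_{(\lim_i A_i)}\cong\lim_i\yon_{A_i}$ and $\noy{(\colim_i A_i)}\cong\lim_i\noy{A_i}$ in $\modp$ (limits of modules are pointwise, and a limit cone in \cP satisfies the polycategorical universal property), these limits lie in $\modpjk$ since it is closed under limits, and a limit cone in the underlying category of a closed symmetric monoidal category is automatically a limit cone for its multicategory because representables preserve limits. The other half---$\yon$ on distinguished \emph{colimit} cones, $\noy{}$ on distinguished \emph{limit} cones---is the point of introducing \cK. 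Given a distinguished colimit cone $(a_i:A_i\to C)$ in \cP, I would use \eqref{eq:yoneda1} (valid in $\modpjk$, which is full in $\modp$ and contains the representables) to obtain $\modpjk(\yon_A;\cV)\cong\cV(A;\,)$ naturally in $\cV\in\modpjk$, then invoke the colimit clause of the $\pjk$-module condition, specialized to empty $\Gamma,\Delta$, to get $\cV(C;\,)\cong\lim_i\cV(A_i;\,)$ with transition maps induced by the $a_i$; transporting along the Yoneda isomorphism identifies $(\yon_{a_i}:\yon_{A_i}\to\yon_C)$ as a colimit cocone in $\modpjk$. Upgrading this to a colimit in the multicategorical sense is routine using closedness ($\modpjk(\Gamma,\yon_C,\Sigma;\cV)\cong\modpjk(\yon_C;\cV')$ for a suitable internal-hom object $\cV'\in\modpjk$), and the statement for $\noy{}$ is dual, using $\modpjk(\noy{A};\cV)\cong\cV(\,;A)$ together with the limit clause of the $\pjk$-condition.

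The main obstacle here is conceptual rather than computational: one must keep in mind that colimits in $\modpjk$ are \emph{not} those of $\modp$ (in $\modp$, or even $\modpj$, nothing can be said about $\yon_{(\colim_i A_i)}$), so the second half genuinely depends on the $\cK$-conditions, and these enter only through the Yoneda lemma---which is why the $\pjk$-module conditions have to be stated precisely in the ``$\cV(C;\,)\cong\lim_i\cV(A_i;\,)$'' form, so as to match the corepresenting functor of the candidate colimit. Beyond that, the only thing demanding care is the bookkeeping of which of the four cases (limit vs.\ colimit, $\yon$ vs.\ $\noy{}$) is ``easy'' and which is ``hard,'' remembering that $\noy{}$ is contravariant so that distinguished limits in \cP become colimits in $\modpjk$ under it, and distinguished colimits become limits.
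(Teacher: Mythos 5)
Your proposal is correct and takes essentially the same route as the paper: parts (i)--(iv) are dispatched exactly as in \cref{thm:mod-ei} and \cref{thm:sh-emb}, and for (v) the paper likewise gets the easy half ($\yon$ on limits, $\noy{}$ on colimits) from pointwise limits of modules and proves the remaining half by the Yoneda-plus-$\cK$-condition computation $\modpjk(\Gamma,\yon_{(\colim_i A_i)};\cU)\cong\modpjk(\Gamma;\cU[\colim_i A_i\,;\,])\cong\lim_i\modpjk(\Gamma,\yon_{A_i};\cU)$, of which your unary-Yoneda-plus-closedness argument is just the transposed form. One cosmetic point: the orthogonality maps encoding the distinguished limit and colimit conditions have colimits of representables (not single representables) on one side, but the class is still small, so your reflectivity argument in (i) goes through unchanged.
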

\begin{proof}
  Parts~\ref{item:eis1}--\ref{item:eis4} are essentially just like \cref{thm:yoneda-poly,thm:mod-ei}.
  To express respect for a limit as an orthogonality property, we use colimits of modules as in the original~\cite{fk:cts-frs}.
  Note that if the limits and colimits in \cK are finite, then these colimits of modules are still finitely presentable, so that the iterative reflection construction requires only countably many steps and is constructive.

  For~\ref{item:eis5}, it remains to show that $\yon$ preserves distinguished colimits and $\noy{}$ preserves distinguished limits (i.e.\ takes them to colimits).
  To see this, let \cU be a $\pjk$-module; then we have
  \begin{align*}
    \modpjk(\Gamma,\yon_{(\colim_i A_i)}; \cU)
    &\cong \modpjk(\Gamma; \cU[\colim_i A_i\,; \,])\\
    &\cong \modpjk(\Gamma; \lim_i \cU[A_i\,; \,])\\
    &\cong \lim_i \modpjk(\Gamma; \cU[A_i\,;\,])\\
    &\cong \lim_i \modpjk(\Gamma, \yon_{A_i}; \cU)\\
    &\cong \modpjk(\Gamma,\colim_i (\yon_{A_i}); \cU),
  \end{align*}
  using the assumption on $\cU$ in the second step.
  The claim about $\noy{}$ is dual.
  % Dually, we have
  % \begin{align*}
  %   \modpjk(\noy{(\lim_i A_i)}, \cU)
  %   &\cong \cU(\,;\lim_i A_i)\\
  %   &\cong \lim_i \cU(\,;A_i)\\
  %   &\cong \lim_i \modpjk(\noy{A_i}, \cU)\\
  %   &\cong \modpjk(\colim_i(\noy{A_i}), \cU).\qedhere
  % \end{align*}
\end{proof}

\begin{defn}
  Given $\pjk$, its \textbf{envelope} is the Chu construction
  \[ \envpjk=\chu(\modpjk,\cP).\]
\end{defn}

%As before, $\envpjk$ is $\ast$-autonomous, and we have:

\begin{thm}\label{thm:sshenv}
  $\envpjk$ is a complete and cocomplete $\ast$-autonomous category, contains \cP as a full sub-polycategory, and the inclusion preserves the distinguished tensor and cotensor products and the distinguished limits and colimits.
  %Moreover, if \cP is a $\ast$-polycategory, the inclusion preserves duals.
\end{thm}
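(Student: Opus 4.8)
I would combine the arguments already used for \cref{thm:shenv,thm:ssh-yon}, the only genuinely new ingredient being the behaviour of the distinguished limits and colimits. By \cref{thm:ssh-yon}, $\modpjk$ is a reflective subcategory of the locally presentable category $\modp$ that is closed under all limits formed there, hence is itself complete, cocomplete and locally presentable, and moreover closed symmetric monoidal with pullbacks. So, exactly as for $\envp$ and $\envpj$ (with dualizing presheaf $\yon_\cP$, which is representable since $\cP\in\modpjk$), the Chu construction $\envpjk=\chu(\modpjk,\cP)$ is $\ast$-autonomous. For bicompleteness I would invoke the standard fact that a Chu construction over a bicomplete closed symmetric monoidal category is bicomplete, with (co)limits formed componentwise: a limit of $(A_i)$ has $\p$-component $\lim_i A_i\p$ and $\m$-component $\colim_i A_i\m$, with Chu pairing assembled from the structure maps $\underline{A_i}$ using cocontinuity of $\ten$, and colimits are dual. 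Since $\modpjk$ is bicomplete, so is $\envpjk$.

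That \cP sits inside $\envpjk$ as a full sub-polycategory, and that the inclusion preserves the distinguished tensor and cotensor products, follows verbatim as in \cref{thm:shenv}: since $\modpjk$ is a full sub-multicategory of $\modp$, $\envpjk$ is a full sub-polycategory of $\envp$; it contains the image $(\yon_A,\noy A,\gamma_A)$ of \cP because $\yon_A,\noy A\in\modpjk$; fullness then comes from \cref{thm:yoneda-poly}; the embedding $\modpjk\into\envpjk$ preserves tensor products like any monoidal category inside its Chu construction; $\Umulti(\cP)\into\modpjk$ preserves distinguished tensor products by part~\ref{item:eis4} of \cref{thm:ssh-yon}; and the dual composite $\Umulti(\cP)\into\modpjk\op\into\envpjk$ sends distinguished cotensor products to tensor products, the two embeddings of $\Umulti(\cP)$ into $\envpjk$ agreeing by the envelope analogue of \cref{thm:dual-rep}.

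For the new point --- preservation of the distinguished limits and colimits --- I would argue through the two projection functors out of the Chu construction. Both $(-)\p\colon\envpjk\to\modpjk$ and $(-)\m\colon\envpjk\to\modpjk\op$ preserve all limits and all colimits (by the componentwise description above), and they \emph{jointly reflect} limit cones: a cone in $\envpjk$ whose $\p$-image is a limit cone and whose $\m$-image is a colimit cocone is automatically a limit cone, since the Chu pairing on its apex is then uniquely determined; dually they jointly reflect colimit cocones. Now apply the Hyland embedding $\cP\into\envpjk$ to a distinguished limit cone $\lim_i A_i\to A_i$: its $\p$-image is $\yon_{\lim_i A_i}\to\yon_{A_i}$, a limit cone in $\modpjk$ since $\yon$ preserves distinguished limits (part~\ref{item:eis5} of \cref{thm:ssh-yon}), while its $\m$-components form the cocone $\noy{A_i}\to\noy{\lim_i A_i}$, a colimit cocone in $\modpjk$ since $\noy{}$ takes distinguished limits to colimits (part~\ref{item:eis5} of \cref{thm:ssh-yon}); hence the image of the cone is a limit cone in $\envpjk$, i.e.\ $\cP\into\envpjk$ preserves distinguished limits. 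Distinguished colimits are handled dually, using that $\yon$ preserves distinguished colimits and that $\noy{}$ sends colimits to limits.

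The step I expect to require the most care is the bookkeeping around the Chu pairings: verifying precisely that the componentwise formulas do produce (co)limits in $\envpjk$, that the two projection functors jointly reflect them as claimed, and in particular that the object $(\yon_{\lim_i A_i},\noy{\lim_i A_i},\gamma_{\lim_i A_i})$ produced by the Hyland embedding carries exactly the pairing forced by the limit cone. Given cocontinuity of $\ten$ on $\modpjk$ and the preservation statements imported from \cref{thm:ssh-yon}, each of these is routine rather than deep, but it is where the real content of the argument lives.
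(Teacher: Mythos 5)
Your proposal is correct and takes essentially the same route as the paper: everything except the statement about limits and colimits is handled exactly as in \cref{thm:shenv}, and the preservation of distinguished limits and colimits reduces to the componentwise formulas for (co)limits in a Chu construction (limit of positives, colimit of negatives, and dually) combined with \cref{thm:ssh-yon}\ref{item:eis5}. Your "joint reflection by the two projections" step is just a repackaging of the paper's direct appeal to those componentwise formulas.
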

\begin{proof}
  Just like \cref{thm:shenv} except for the final claim, which follows from the formulas for limits and colimits in a Chu construction and \cref{thm:ssh-yon}\ref{item:eis5}:
  \begin{align*}
    \lim_i (A^+_i, A^-_i, e_{i}) &= (\lim_i A^+_i, \colim_i A^-_i, \underline{\hspace{1ex}})\\
    \colim_i (A^+_i, A^-_i, e_{i}) &= (\colim_i A^+_i, \lim_i A^-_i, \underline{\hspace{1ex}})\qedhere
  \end{align*}
\end{proof}

We want to combine this with \cref{thm:smcc-poly} as before, but there is a complication.
For any limit cone in a closed symmetric monoidal category \cC, we might expect
\begin{align*}
  \text{\textquestiondown} \qquad \fsp\cC(\Gamma,\overline\Pi; \Delta, \overline\Sigma, \lim_i A_i)
  &= \cC(\Gamma,\Sigma; \Delta,\Pi, \lim_i A_i)\\
  &\cong \lim_i \cC(\Gamma,\Sigma; \Delta,\Pi, A_i)\\
  &=\lim_i \fsp\cC(\Gamma,\overline\Pi; \Delta, \overline\Sigma, A_i) \qquad ?
\end{align*}
yielding a limit in $\fsp\cC$.
However, the isomorphism in the second line is only valid if $\Delta=\Pi=\emptyset$, whereas to have a limit in $\fsp\cC$ the composite isomorphism must hold for all $\Delta$ and $\Pi$.
Of course, if $\Delta\cup\Pi$ is nonempty, then $\fsp\cC(\Gamma,\overline\Pi; \Delta, \overline\Sigma, \lim_i A_i)$ is empty, as is each $\fsp\cC(\Gamma,\overline\Pi; \Delta, \overline\Sigma, A_i)$ --- but the latter only implies that their limit % $\lim_i \fsp\cC(\Gamma,\d\Pi; \Delta, \d\Sigma, A_i)$
is empty \emph{if the diagram is nonempty!}
And indeed, a terminal object in \cC need not be terminal in $\fsp\cC$: the latter requires $\fsp\cC(\Gamma,\overline\Pi;\Delta,\overline\Sigma,1)=1$ always, but the former only ensures this when $\Pi = \Delta = \emptyset$.
Similar considerations apply for colimits, % (where the relevant condition is $|\Delta\cup\Pi|=1$).
so the best enhancement of \cref{thm:main} we can manage is:

\begin{thm}\label{thm:main2}
  Any closed symmetric monoidal category \cC can be fully embedded in a $\ast$-autonomous category by a strong symmetric monoidal closed functor, which preserves any chosen family of \emph{nonempty} limits and colimits that exist in \cC.\qed
\end{thm}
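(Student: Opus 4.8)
The plan is to feed a decorated version of $\fsp\cC$ into \cref{thm:sshenv}, just as the proof of \cref{thm:main} fed it into \cref{thm:shenv}, choosing the distinguished cones $\cK$ so as to encode the prescribed nonempty limits and colimits. First I would dispose of size by assuming $\cC$ is small (otherwise replace it by a small full subcategory still containing all the chosen diagrams, or fix a universe), so that the set $\cK$ built below is automatically small. Then I would form the $\ast$-polycategory $\fsp\cC$ and the fully faithful functor $\cC\to\Umulti(\fsp\cC)$ of \cref{thm:smcc-poly}, and take $\cJ$ to be the tensor products and unit coming from $\cC$ together with the cotensor products $\Abar\coten B=(A\hom B)$, exactly as in the proof of \cref{thm:main}.

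The genuinely new point, and the one I expect to be the main obstacle, is the construction of $\cK$ --- precisely the subtlety isolated in the discussion immediately preceding the theorem. I would check that a \emph{nonempty} limit cone $(\lim_i A_i\to A_i)$ in $\cC$, viewed in $\fsp\cC$, is again a limit cone there in the polycategorical sense of \cref{sec:limits-colim}, i.e.\ that the comparison map
\[ \fsp\cC(\Gamma,\overline\Pi;\Delta,\overline\Sigma,\lim_i A_i)\;\longrightarrow\;\lim_i\,\fsp\cC(\Gamma,\overline\Pi;\Delta,\overline\Sigma,A_i) \]
is a bijection for all lists $\Gamma,\Pi,\Delta,\Sigma$ of objects of $\cC$. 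Unwinding the definition of $\fsp\cC$: when $\Delta\cup\Pi\ne\emptyset$ the domain and every term of the limit are empty, so that --- because the indexing diagram is \emph{nonempty} --- both sides are empty and the map is a (trivial) bijection; when $\Delta=\Pi=\emptyset$ the map is the usual comparison $\cC(\Gamma,\Sigma;\lim_i A_i)\to\lim_i\cC(\Gamma,\Sigma;A_i)$, a bijection by the universal property of $\lim_i A_i$ in $\cC$. The argument for a nonempty colimit cone is dual. I would then let $\cK$ be the set of $\fsp\cC$-cones obtained this way. (It is exactly an \emph{empty} indexing diagram --- a terminal or initial object of $\cC$ --- that breaks this step, since then the relevant limit of empty sets is a singleton, not empty; this is why only nonempty diagrams can be accommodated.)

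With $(\fsp\cC,\cJ,\cK)$ in hand, \cref{thm:sshenv} produces a complete and cocomplete $\ast$-autonomous category $\envcjk$ together with a full embedding of polycategories $\fsp\cC\into\envcjk$ preserving the distinguished tensor and cotensor products and the distinguished limits and colimits. I would then take the composite $\cC\to\Umulti(\fsp\cC)\to\Umulti(\envcjk)$ and verify the required properties. It is fully faithful, being a composite of full embeddings. It is strong symmetric monoidal and closed, since the tensor products and unit of $\cC$ land on distinguished tensor products of $\fsp\cC$, hence on honest tensor products of the representable category $\envcjk$, while an internal-hom $A\hom B$ lands on the distinguished cotensor $\Abar\coten B$, which in the $\ast$-autonomous category $\envcjk$ is $\d A\coten B\cong(A\hom B)$ because the embedding sends $\Abar$ to the dual of the image of $A$ (indeed $\yon_{\Abar}\cong\noy{A}$ and $\noy{\Abar}\cong\yon_{A}$ straight from the definition of $\fsp\cC$). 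Finally it preserves each chosen nonempty limit or colimit of $\cC$: by the previous paragraph such a cone is a distinguished cone of $\fsp\cC$, hence preserved by $\fsp\cC\into\envcjk$, and a polycategorical limit or colimit is in particular an ordinary one (specialize the defining isomorphism to a single-object $\Gamma$ and empty $\Delta$). Apart from the emptiness issue in the middle step, everything here is routine bookkeeping: matching the polycategorical universal properties of \cref{thm:sshenv} against the classical notions of a strong monoidal closed functor and of preservation of ordinary limits and colimits.
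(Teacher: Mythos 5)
Your proposal is correct and follows the paper's own route: the paper obtains this theorem precisely by combining \cref{thm:smcc-poly} with \cref{thm:sshenv}, taking $\cK$ to be the chosen nonempty limit/colimit cones, whose status as polycategorical limits and colimits in $\fsp\cC$ rests on exactly the emptiness analysis you carry out (and which the paper records in the discussion preceding the theorem). Your additional bookkeeping (smallness, and checking the composite is strong monoidal closed and preserves the cones) matches the intended argument.
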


With other choices of $(\cP,\cJ,\cK)$, we can embed other kinds of structures into $\ast$-autonomous categories as well.
For instance:
\begin{itemize}
\item Any linearly distributive category can be fully embedded in a $\ast$-autonomous category, preserving tensors, cotensors, and any set of colimits that are preserved in each variable by $\ten$ and limits that are preserved in each variable by $\coten$.
\item Any ordinary category can be fully embedded in a $\ast$-autonomous category preserving any set of nonempty limits and colimits.
  (By \cref{rmk:isbell}, this is closely related to the Isbell envelope.)
\end{itemize}

\begin{rmk}
  In the special case when \cC is \emph{cartesian} closed, of course the terminal object $1$ is also the monoidal unit.
  Since the embedding $\cC\into \envcjk$ preserves the monoidal unit, the image of $1\in\cC$ in $\envcjk$ is again the monoidal unit, but it will no longer be terminal.
  And if we include binary products in $\cK$, then this embedding preserves both tensor products and binary cartesian products, which coincide in \cC; hence tensor products and binary cartesian products \emph{of objects in the image of \cC} also coincide in $\envcjk$.
  However, $\envcjk$ is not itself cartesian monoidal: tensor products and cartesian products of objects not in the image of \cC need not coincide.
\end{rmk}

\section{Conservativity}
\label{sec:conservativity}

Since closed symmetric monoidal and $\ast$-autonomous categories model intuitionistic and classical linear logic, respectively, \cref{thm:main2} implies a (1-)conservativity result.

\begin{thm}\label{thm:cons}
  Classical linear logic with $(-)^\bot, \ten, \coten, \unit, \bot, \hom, \&, \oplus$ is conservative over intuitionistic linear logic with $\ten, \unit, \hom, \&, \oplus$ (but not $\mathbf{0},\top$).
\end{thm}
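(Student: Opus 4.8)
The plan is to derive \cref{thm:cons} from the embedding theorem \cref{thm:main2} by the standard route through categorical semantics, proving the contrapositive: a sequent in the intuitionistic language that is \emph{not} provable intuitionistically will fail in some $\ast$-autonomous category, and hence cannot be provable classically.

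First I would recall the relevant categorical semantics. Intuitionistic linear logic over $\ten,\unit,\hom,\&,\oplus$ (with no $\top,\mathbf 0$) has a syntactic category $\cS$ generated by the atomic propositions: it is closed symmetric monoidal with binary products and coproducts, and a morphism $\llbracket\Gamma\rrbracket\to\llbracket A\rrbracket$ in $\cS$ exists if and only if the sequent $\Gamma\vdash A$ is provable. Classical linear logic over $(-)^\bot,\ten,\coten,\unit,\bot,\hom,\&,\oplus$ (again with no $\top,\mathbf 0$) is sound for $\ast$-autonomous categories equipped with binary products and coproducts. The point that makes these fit together is that binary $\&$ and $\oplus$ are \emph{nonempty} products and coproducts, and therefore lie within the reach of \cref{thm:main2}.

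Next I would apply \cref{thm:main2} to $\cS$ --- concretely, combine \cref{thm:smcc-poly} with \cref{thm:sshenv}, taking $\cK$ to consist of the binary product and coproduct cones of $\cS$, which become nonempty limit and colimit cones in $\fsp\cS$ --- obtaining a full, faithful, strong symmetric monoidal closed functor $i\colon\cS\to\cE$ into an $\ast$-autonomous category $\cE$ that moreover preserves binary products and coproducts and is complete and cocomplete. Interpreting each atom $p$ in $\cE$ as $i(\llbracket p\rrbracket_\cS)$ and extending over all connectives using the $\ast$-autonomous structure and the binary (co)products of $\cE$, a routine induction on formulas in the intuitionistic language --- using that $i$ preserves $\ten,\unit,\hom$ and binary products and coproducts --- gives $\llbracket A\rrbracket_\cE\cong i(\llbracket A\rrbracket_\cS)$. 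Then, if an intuitionistic-language sequent $\Gamma\vdash A$ is provable in classical linear logic, soundness yields a morphism $\llbracket\Gamma\rrbracket_\cE\to\llbracket A\rrbracket_\cE$, i.e.\ a morphism $i(\llbracket\Gamma\rrbracket_\cS)\to i(\llbracket A\rrbracket_\cS)$; by fullness of $i$ this comes from a morphism $\llbracket\Gamma\rrbracket_\cS\to\llbracket A\rrbracket_\cS$, so $\Gamma\vdash A$ is provable intuitionistically. This is exactly the asserted conservativity.

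The main obstacle, and the reason $\top$ and $\mathbf 0$ must be excluded, is that \cref{thm:main2} preserves only \emph{nonempty} limits and colimits: a terminal or initial object of $\cS$ need not remain terminal or initial in $\cE$ (see the discussion preceding \cref{thm:main2}), so the inductive identification $\llbracket A\rrbracket_\cE\cong i(\llbracket A\rrbracket_\cS)$ would fail for formulas involving those constants --- in line with the known failure of this conservativity once $\mathbf 0$ is present. Everything else is routine bookkeeping: the soundness and term-model facts for the two logics, which are standard, and the inductive comparison of the two interpretations.
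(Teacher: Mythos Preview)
Your proposal is correct and follows essentially the same route as the paper: construct the syntactic closed symmetric monoidal category for the intuitionistic fragment, embed it via \cref{thm:main2} into a $\ast$-autonomous category preserving $\ten,\unit,\hom$ and the (nonempty) binary products and coproducts, and then use fullness of the embedding to pull classical proofs back to intuitionistic ones. The paper phrases the last step slightly differently---factoring the embedding through the free $\ast$-autonomous category $\cD_T$ on the same generators rather than invoking soundness directly---but this is only a cosmetic difference, and your explicit inductive identification $\llbracket A\rrbracket_\cE\cong i(\llbracket A\rrbracket_\cS)$ makes the argument a bit more transparent.
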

\begin{proof}
  Let \mamll and \imamll denote the given fragments, and let $T$ be a theory in \imamll.
  By imposing an appropriate equivalence relation on proofs from $T$ in \imamll and \mamll respectively, we obtain a closed symmetric monoidal category $\cC_T$ and a $\ast$-autonomous category $\cD_T$, both with binary products and coproducts, and each freely generated by a model of $T$.
  % among categories with the same structure.

  By \cref{thm:main2}, $\cC_T$ embeds fully in $\env{\cC_T}$ preserving all the structure.
  Since $\env{\cC_T}$ is a \mbox{$\ast$-auto}\-nomous model of $T$, this embedding factors up to isomorphism through $\cD_T$.
  Now any sequent in the language of \imamll over $T$ that is provable in \mamll yields a morphism in $\cD_T$ between objects in the image of $\cC_T$.
  Hence such a morphism also exists in $\env{\cC_T}$, and thus also in $\cC_T$.
\end{proof}

Similarly, we can show that \mamll is conservative over any smaller fragment of \imamll, and that full MALL is conservative over any of its fragments.
(IMALL is not a fragment of MALL in this sense, since its judgmental structure is different.)

Perhaps surprisingly, \cref{thm:cons} is almost best possible.
By~\cite{schellinx:syntac-ll}, classical linear logic with $\mathbf{0}$ and $\hom$ is \emph{not} conservative over intuitionistic linear logic with the same connectives: the sequent
\[ C \hom ((\mathbf{0}\hom X) \hom A),\, (C\hom B)\hom \mathbf{0} \vdash A \]
is provable in the former but not the latter.
Thus, not every closed symmetric monoidal category with initial object embeds monoidally into a $\ast$-autonomous category preserving the initial object.

Conservativity of \mamll over \imamll says we can use the former to reason about the latter without changing the theorems that are provable.
Semantically, this means that we can assume a closed symmetric monoidal poset (with binary meets and joins) is $\ast$-autonomous (and hence use the syntax of classical linear logic) without changing the inequalities between objects of the original poset.

To a certain extent, \cref{thm:main2} allows us to similarly use syntaxes for $\ast$-autonomous categories (e.g.\ term syntaxes for linear logic, or graphical calculi such as proof nets and circuit diagrams---see the references cited in \cref{sec:introduction}) to reason about a closed symmetric monoidal category \cC.
For instance, since $\cC\into \env\cC$ is faithful and isomorphism-reflecting, if two morphisms in \cC can be proven equal using $\ast$-autonomous syntax, they were already equal in \cC, and if a morphism in \cC can be proven invertible using $\ast$-autonomous syntax, it was already invertible in \cC.

Since $\cC\into \env\cC$ is full, we can also construct morphisms in \cC in this way; but \textit{a priori} the \emph{particular} morphism in \cC that we obtain might depend on the particular embedding of \cC into its envelope.
And while the envelope seems ``canonical'', it is by no means the \emph{unique} closed monoidal embedding of any given \cC in a $\ast$-autonomous category (for instance, if \cC has a zero object, then $\cC\into \chu(\cC,0)$ suffices).
% Thus, with only \cref{thm:main2} there is a certain arbitrariness in the interpretation of $\ast$-autonomous syntax in a closed symmetric monoidal category, which we might hope to avoid.

As noted in the introduction, the most sensible way to avoid this is to show that the \emph{universal} functor $\Phi:\cC\to\cD$ from \cC to a $\ast$-autonomous category is fully faithful, which we call \textbf{2-conservativity}.
In this case, since any fully faithful embedding of \cC in a $\ast$-autonomous category factors through the universal one, all interpretations of $\ast$-autonomous syntax in \cC must coincide.
% I do not know a standard name for a theorem of this sort, but since it is a sort of categorification of conservativity, I will call it \emph{2-conservativity} (it is also sometimes called ``abstract full completeness'').

% \cref{thm:cons} says equivalently that for the universal functor $\Phi:\cC\to\cD$ from a closed symmetric monoidal category to a $\ast$-autonomous one (both with binary products and coproducts), if $\cD(\Phi (\Gamma);\Phi (\Delta))$ is nonempty, so is $\cC(\Gamma;\Delta)$.
% Similarly:

% \begin{thm}\label{thm:faith-cons}
%   Let $\Phi:\cC\to\cD$ be the universal functor from a closed symmetric monoidal category \cC to a $\ast$-autonomous category (perhaps with specified nonempty limits and/or colimits).
%   %i.e.\ such that any closed symmetric monoidal functor from \cC to a $\ast$-autonomous category factors through $\Phi$ essentially uniquely.
%   Then $\Phi$ is faithful and conservative.
% \end{thm}
% \begin{proof}
%   As before, $\cC$ embeds in the $\ast$-autonomous $\env{\cC}$ preserving all its structure.
%   This embedding is fully faithful, hence faithful and conservative.
%   This embedding factors through $\Phi$, so $\Phi$ is also faithful and conservative.
% \end{proof}

% Obviously a more categorically desirable result would be that $\Phi$ is fully faithful.
% In particular,
% as noted in the introduction, this is the most sensible way to ensure that the interpretation of $\ast$-autonomous graphical calculi in closed symmetric monoidal categories does not depend on the particular embedding chosen.

\cref{thm:main2} implies that $\Phi$ is faithful and conservative, since $\cC\into \env\cC$ factors through it.
We cannot show full-faithfulness of $\Phi$ in the same way, but we can use a general technique introduced by Lafont~\cite{lafont:thesis} that combines Artin gluing along a restricted Yoneda embedding (a.k.a.\ a ``Kripke logical relation''), as generalized to the $\ast$-autonomous case by~\cite{tan:thesis,hs:glue-orth-ll,hasegawa:glueing-cll} using \emph{double gluing}.

\section{Double Gluing}
\label{sec:double-gluing}

The name ``double gluing'' presumably refers to the appearance of \emph{two} ``logical relation'' families, but fortuitously it can also be expressed using \emph{double categories}.
Recall that a (strict) double category is a category internal to $\mathsf{Cat}$; by a \textbf{poly double category} we mean a category internal to \fPoly.
For example, any polycategory \cP induces a poly double category $\dQ\cP$ consisting of the following structure:
\begin{itemize}
\item The objects and the horizontal poly-arrows are those of \cP.
\item The vertical arrows are the unary and co-unary morphisms of \cP.
  (Note that the vertical arrows in any poly double category are only an ordinary category.)
\item The 2-cells are ``commutative squares'' in \cP of the form
  \[
    \begin{tikzcd}
      (A_1,\dots,A_m) \ar[r,"f"]
      \ar[d,shift left=7,"u_m"] \ar[d,shift right=7,"u_1"'] \ar[d,phantom,"\cdots"]  &
      (B_1,\dots,B_n) \ar[d,shift left=7,"v_n"] \ar[d,shift right=7,"v_1"'] \ar[d,phantom,"\cdots"] \\
      (C_1,\dots,C_m) \ar[r,"g"'] &
      (D_1,\dots,D_n)
    \end{tikzcd}
  \]
  i.e.\ the assertion that $g\circ (u_1,\dots,u_m) = (v_1,\dots,v_n)\circ f$.
\end{itemize}
%If $\cP$ is co-subunary, so is $\dQ\cP$.

Now since the functor \chu is a right adjoint, it preserves internal categories.
Thus any multicategory \cE with a presheaf \Bbbk has a \textbf{double Chu construction}~\cite{shulman:dialectica}
\[\dChu(\cE,\Bbbk) \coloneqq \chu(\dQ(\cE,\Bbbk)). \]
This is a poly double category described as follows.
\begin{itemize}
\item Its objects and horizontal poly-arrows are those of $\chu(\cE,\Bbbk)$.
\item A vertical arrow $u:A\to B$ is a pair $(u\p : A\p\to B\p, u\m: A\m\to B\m)$ such that $\uB \circ (u\p,u\m) = \uA$.
\item % If $m+n>0$
  A 2-cell 
  \[
  \begin{tikzcd}
    (A_1,\dots,A_m) \ar[r,"f"]
    \ar[d,shift left=7,"u_m"] \ar[d,shift right=7,"u_1"'] \ar[d,phantom,"\cdots"] \ar[dr,phantom,"\Downarrow\scriptstyle\mu"] &
    (B_1,\dots,B_n) \ar[d,shift left=7,"v_n"] \ar[d,shift right=7,"v_1"'] \ar[d,phantom,"\cdots"] \\
    (C_1,\dots,C_m) \ar[r,"g"'] &
    (D_1,\dots,D_n)
  \end{tikzcd}
  \]
  consists of a family of commuting squares in \sE:
  % \[\small\hspace{-2cm}
  % \begin{tikzcd}
  %   (A_1\p, \dots, A_m\p, B_1\m, \dots, \widehat{B_j\m},\dots B_n\m) \ar[r,"f\p_j"{name=f}]
  %   \ar[d,shift left=28] \ar[d,shift right=28]
  %   \ar[d,phantom,"{\scriptstyle\cdots (u_1\p,\dots,u_m\p,v_1\m,\dots,\widehat{v_j\m},\dots,v_n\m)\cdots}"]
  %   % \ar[dr,phantom,"\Downarrow\scriptstyle\mu"]
  %   &
  %   B_j\p \ar[d,"v_j\p"] \\
  %   (C_1\p, \dots, C_m\p, D_1\m, \dots, \widehat{D_j\m},\dots D_n\m) \ar[r,"g\p_j"'{name=g}] &
  %   D_j\p
  % \end{tikzcd}
  % \qquad
  % \begin{tikzcd}
  %   (A_1\p , \dots, \widehat{A_i\p},\dots A_m\p , B_1\m, \dots, B_n\m) \ar[r,"f\m_i"{name=f}]
  %   \ar[d,shift left=28] \ar[d,shift right=28]
  %   \ar[d,phantom,"{\scriptstyle\cdots(u_1\p , \dots, \widehat{u_i\p},\dots u_m\p , v_1\m, \dots, v_n\m) \cdots}"]
  %   % \ar[dr,phantom,"\Downarrow\scriptstyle\mu"]
  %   &
  %   A_i\m \ar[d,"u_i\m"] \\
  %   (C_1\p , \dots, \widehat{C_i\p},\dots C_m\p , D_1\m, \dots, D_n\m) \ar[r,"g\m_i"'{name=g}] &
  %   C_i\m
  % \end{tikzcd}\hspace{-2cm}
  % \]
  % \[\small\hspace{-2cm}
  %   \begin{tikzcd}
  %     (A_1\p, \dots, A_m\p, B_1\m,,\dots B_n\m) \ar[r,"\uf"{name=f}]
  %     \ar[d,shift left=20] \ar[d,shift right=20]
  %     \ar[d,phantom,"{\scriptstyle\cdots (u_1\p,\dots,u_m\p,v_1\m,\dots,v_n\m)\cdots}"]
  %     % \ar[dr,phantom,"\Downarrow\scriptstyle\mu"]
  %     &
  %     \Bbbk \ar[d,equals] \\
  %     (C_1\p, \dots, C_m\p, D_1\m,,\dots D_n\m) \ar[r,"\ug"'{name=g}] &
  %     \Bbbk
  %   \end{tikzcd}
  % \]
  \[\small\hspace{-2cm}
  \begin{tikzcd}
    (A_1\p, \dots, A_m\p, B_1\m, \dots, \widehat{B_j\m},\dots B_n\m) \ar[r,"f\p_j"{name=f}]
    \ar[d,shift left=22] \ar[d,shift right=22]
    \ar[d,phantom,"{\scriptstyle\cdots (u_1\p,\dots,u_m\p,v_1\m,\dots,\widehat{v_j\m},\dots,v_n\m)\cdots}"]
    % \ar[dr,phantom,"\Downarrow\scriptstyle\mu"]
    &
    B_j\p \ar[d,"v_j\p"] \\
    (C_1\p, \dots, C_m\p, D_1\m, \dots, \widehat{D_j\m},\dots D_n\m) \ar[r,"g\p_j"'{name=g}] &
    D_j\p
  \end{tikzcd}
  \qquad
  \begin{tikzcd}
    (A_1\p , \dots, \widehat{A_i\p},\dots A_m\p , B_1\m, \dots, B_n\m) \ar[r,"f\m_i"{name=f}]
    \ar[d,shift left=22] \ar[d,shift right=22]
    \ar[d,phantom,"{\scriptstyle\cdots(u_1\p , \dots, \widehat{u_i\p},\dots u_m\p , v_1\m, \dots, v_n\m) \cdots}"]
    % \ar[dr,phantom,"\Downarrow\scriptstyle\mu"]
    &
    A_i\m \ar[d,"u_i\m"] \\
    (C_1\p , \dots, \widehat{C_i\p},\dots C_m\p , D_1\m, \dots, D_n\m) \ar[r,"g\m_i"'{name=g}] &
    C_i\m
  \end{tikzcd}\hspace{-2cm}
  \]
  \[\small\hspace{-2cm}
    \begin{tikzcd}
      (A_1\p, \dots, A_m\p, B_1\m,,\dots B_n\m) \ar[r,"\uf"{name=f}]
      \ar[d,shift left=17] \ar[d,shift right=17]
      \ar[d,phantom,"{\scriptstyle\cdots (u_1\p,\dots,u_m\p,v_1\m,\dots,v_n\m)\cdots}"]
      % \ar[dr,phantom,"\Downarrow\scriptstyle\mu"]
      &
      \bbot \ar[d,equals] \\
      (C_1\p, \dots, C_m\p, D_1\m,,\dots D_n\m) \ar[r,"\ug"'{name=g}] &
      \bbot
    \end{tikzcd}
  \]
  (the last follows from the others unless $m=n=0$, when it is the only condition).
\end{itemize}

Now let \cD be a polycategory, and $\Lambda:\cD\to \chu(\cE,\Bbbk)$ a functor, whose action on objects we write as $\Lambda(A) = (L(A),K(A),\lambda_A)$.
On underlying 1-categories, $L$ and $K$ are functors $\cD\to\cE$ and $\cD\to\cE\op$ respectively.

\begin{eg}\label{eg:contraction}
  If \cD and \cE are representable multicategories and $\Bbbk=1$ is terminal, then $\Lambda$ reduces to the input data of~\cite[\S4.2.1]{hs:glue-orth-ll}.
  For applying $\Lambda$ to the universal morphism $(A,B) \to A\ten B$ yields
  \begin{equation*}
    m:L(A)\ten L(B) \to L(A\ten B)\qquad
    k:L(A)\ten K(A\ten B) \to K(B)\qquad
    k':L(B)\ten K(A\ten B) \to K(A)
  \end{equation*}
  % \begin{align*}
  %   m&:L(A)\ten L(B) \to L(A\ten B)\\
  %   k&:L(A)\ten K(A\ten B) \to K(B)\\
  %   k'&:L(B)\ten K(A\ten B) \to K(A)
  % \end{align*}
  of which $k$ and $k'$ determine each other by symmetries.
  Applying $\Lambda$ to units and triple tensors makes $m$ a lax symmetric monoidal structure on $L$ and $k$ a ``contraction'' as in~\cite[\S4.2.1]{hs:glue-orth-ll}, and this determines $\Lambda$.
\end{eg}
  % If \cD is not a multicategory but is co-representable as well, $\Lambda$ also induces morphisms
  % \begin{align*}
  %   & K(A) \ten K(B) \to K(A\coten B)\\
  %   & L(A\coten B) \ten K(A) \to L(B)\\
  %   & L(A\coten B) \ten K(B) \to L(A)
  % \end{align*}

\begin{eg}
  By \cref{thm:chu-adjt}, if \cD is a $\ast$-polycategory and $\bbot=1$, a $\ast$-polycategory functor $\Lambda:\cD \to \chu(\cE,1)$ is uniquely determined by $L:\Umulti(\cD)\to \cE$, with $K(A)\coloneqq L(\d A)$.
  If \cD and \cE are representable (hence \cD is $\ast$-autonomous), $L$ is just a lax symmetric monoidal functor; thus $\Lambda$ reduces to the input data of~\cite[\S4.3.1]{hs:glue-orth-ll}.
  Up to isomorphism,
  the same holds when \cD is $\ast$-autonomous without strict duals.
\end{eg}

\begin{defn}
  Let $\psi:\Bbbk_1\to\Bbbk_2$ be presheaf map and $\Lambda:\cD \to \chu(\cE,\Bbbk_2)$.
  The \textbf{double gluing} $\gl(\Lambda,\psi)$ is a comma object in the 2-category of poly double categories and \emph{vertical} transformations:
  \[
    \begin{tikzcd}
      \gl(\Lambda,\psi) \ar[r] \ar[d] \ar[dr,phantom,"\Downarrow"] & \chu(\cE,\Bbbk_1) \ar[d,"\psi"] \\
      \cD \ar[r,"\Lambda"'] & \dChu(\cE,\Bbbk_2).
    \end{tikzcd}
  \]
\end{defn}

Here \cD and $\chu(\cE,\Bbbk_1)$ are regarded as vertically discrete poly double categories.
Hence so is $\gl(\Lambda,\psi)$, i.e.\ it is a plain polycategory.
Its objects consist of
\begin{itemize}
\item An object $A\o\in \cD$.
\item An object $(A\p,A\m, \uA)$ of $\chu(\cE,\Bbbk_1)$.
\item A vertical morphism $(A\p,A\m,\psi\circ\uA) \to (L(A\o),K(A\o),\lambda_A)$ in $\dChu(\cE,\Bbbk_2)$, consisting of $A\p \to L(A\o)$ and $A\m \to K(A\o)$ in \cE such that the composites $(A\p,A\m) \to \Bbbk_1 \xto{\psi} \Bbbk_2$ and $(A\p,A\m) \to (L(A\o),K(A\o)) \xto{\lambda} \Bbbk_2$ agree.
\end{itemize}
Similarly, a morphism $(A_1,\dots,A_m) \to (B_1,\dots,B_n)$ in $\gl(\Lambda,\psi)$ consists of
\begin{itemize}
\item A morphism $f:(A\o_1,\dots,A\o_m) \to (B\o_1,\dots,B\o_n)$ in \cD.
\item A morphism $(f\p_j,f\m_i,\uf)$ in $\chu(\cE,\Bbbk_1)$.
\item The following squares commute:
  \begin{equation}\label{eq:glsq1}
    \begin{tikzcd}
      (\hspace{3mm}A_1\p\hspace{3mm}, \dots, \hspace{2mm}A_m\p\hspace{2mm}, \hspace{2mm}B_1\m\hspace{2mm}, \dots, \hspace{2mm}\widehat{B_j\m}\hspace{2mm},\dots \hspace{3mm}B_n\m\hspace{3mm}) \ar[r,"f\p_j"{name=f}]
      \ar[d,shift left=30] \ar[d,shift right=31]
      \ar[d,shift left=14,phantom,"\cdots"]
      \ar[d,shift right=21,phantom,"\cdots"]
      \ar[d,shift right=13]
      \ar[d,shift right=3]
      % \ar[d,phantom,"{\scriptstyle\cdots (u_1\p,\dots,u_m\p,v_1\m,\dots,\widehat{v_j\m},\dots,v_n\m)\cdots}"]
      % \ar[dr,phantom,"\Downarrow\scriptstyle\mu"]
      &
      B_j\p \ar[d] \\
      (L(A\o_1), \dots, L(A\o_m), K(B\o_1), \dots, \widehat{K(B\o_j)},\dots K(B\o_n)) \ar[r,"\Lambda(f)\p_j"'{name=g}] &
      L(B\o_j)
    \end{tikzcd}
  \end{equation}
  \begin{equation}\label{eq:glsq2}
    \begin{tikzcd}
      (\hspace{3mm}A_1\p\hspace{3mm} , \dots, \hspace{2mm}\widehat{A_i\p}\hspace{2mm},\dots \hspace{2mm}A_m\p\hspace{2mm} ,\hspace{2mm} B_1\m\hspace{2mm}, \dots, \hspace{3mm}B_n\m\hspace{3mm}) \ar[r,"f\m_i"{name=f}]
      \ar[d,shift left=30] \ar[d,shift right=31]
      \ar[d,shift right=14,phantom,"\cdots"]
      \ar[d,shift left=21,phantom,"\cdots"]
      \ar[d,shift left=12]
      \ar[d,shift left=2]
      % \ar[d,phantom,"{\scriptstyle\cdots(u_1\p , \dots, \widehat{u_i\p},\dots u_m\p , v_1\m, \dots, v_n\m) \cdots}"]
      % \ar[dr,phantom,"\Downarrow\scriptstyle\mu"]
      &
      A_i\m \ar[d] \\
      (L(A\o_1) , \dots, \widehat{L(A\o_i)},\dots L(A\o_m) , K(B\o_1), \dots, K(B\o_n)) \ar[r,"\Lambda(f)\m_i"'{name=g}] &
      K(A\o_i)
    \end{tikzcd}
  \end{equation}
  \begin{equation}\label{eq:glsq3}
    \begin{tikzcd}
      (\hspace{3mm}A_1\p\hspace{3mm}, \dots, \hspace{2mm}A_m\p\hspace{2mm}, \hspace{2mm}B_1\m\hspace{2mm}, \dots, \hspace{3mm}B_n\m\hspace{3mm}) \ar[r,"\uf"{name=f}]
      \ar[d,shift left=23] \ar[d,shift right=23]
      \ar[d,shift left=12,phantom,"\cdots"]
      \ar[d,shift right=14,phantom,"\cdots"]
      \ar[d,shift right=5]
      \ar[d,shift left=5]
      % \ar[d,phantom,"{\scriptstyle\cdots (u_1\p,\dots,u_m\p,v_1\m,\dots,\widehat{v_j\m},\dots,v_n\m)\cdots}"]
      % \ar[dr,phantom,"\Downarrow\scriptstyle\mu"]
      &
      \Bbbk_1 \ar[d,"\psi"] \\
      (L(A\o_1), \dots, L(A\o_m), K(B\o_1), \dots, K(B\o_n)) \ar[r,"\underline{\Lambda(\smash{f})}"'{name=g}] &
      \Bbbk_2
    \end{tikzcd}
  \end{equation}
  (the last follows from the others unless $m=n=0$, when it is the only condition).
\end{itemize}

\begin{thm}\label{thm:double-gluing}
  Suppose \cE is representable and closed with pullbacks, and $\Bbbk_1$ and $\Bbbk_2$ are either both terminal ($\Bbbk_1=\Bbbk_2=1$) or both representable $(\Bbbk_1 = \yon_{\ik_1}$ and $\Bbbk_2 = \yon_{\ik_2})$.
  Then tensor products, cotensor products, duals, and internal-homs exist in $\gl(\Lambda,\psi)$ insofar as they do for the relevant underlying objects in \cD.
  In particular, if \cD is $\ast$-autonomous, so is $\gl(\Lambda,\psi)$.
\end{thm}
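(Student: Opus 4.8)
The plan is to write down explicit formulas for the operations ($\ten$, $\coten$, $\unit$, $\bot$, duals, $\hom$) in $\gl(\Lambda,\psi)$, modeled on the formulas used to prove that the Chu construction on a representable closed category with pullbacks is representable (\cref{sec:poly-chu}) but ``threaded through'' the comparison functor $\Lambda$, and then to verify the relevant universal properties by unwinding the three-layered description of the objects and morphisms of $\gl(\Lambda,\psi)$. Under the stated hypotheses, $\chu(\cE,\bbot_1)$ and $\dChu(\cE,\bbot_2)$ already carry all of this structure (in the terminal-dualizer case the pullback defining $(A\ten B)\m$ degenerates to a binary product in $\cE$); the clause ``insofar as it exists in $\cD$'' reflects exactly that the first component of each object we build is taken from $\cD$, so we can build e.g.\ $A\ten B$ precisely when $A\o\ten B\o$ exists in $\cD$. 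Several operations are essentially free. Since polycategory functors preserve duals, whenever $\d{(A\o)}$ exists in $\cD$ we have $L(\d{A\o})=K(A\o)$ and $K(\d{A\o})=L(A\o)$, so $\d A$ is obtained from $A$ just by swapping: its first component is $\d{(A\o)}$, its Chu component is $\d{(A\p,A\m,\uA)}=(A\m,A\p,\uA\si)$, its comparison maps $A\m\to K(A\o)$ and $A\p\to L(A\o)$ are those of $A$ reindexed, and its defining squares \eqref{eq:glsq1}--\eqref{eq:glsq3} are those of $A$ read in the other direction. Units and counits are handled the same way, with the (co)unit of $\cD$ in the first slot and the (co)unit of $\chu(\cE,\bbot_1)$ in the Chu slot.

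The substantive case is the tensor product; the cotensor is its dual, and internal-homs can be obtained as $\d{(A\ten\d B)}$ when $\cD$ has the needed duals, or by the same kind of direct construction otherwise. Given $A,B$ with $A\o\ten B\o$ existing in $\cD$, set $(A\ten B)\o=A\o\ten B\o$ and $(A\ten B)\p=A\p\ten B\p$ in $\cE$; the comparison $(A\ten B)\p\to L(A\o\ten B\o)$ is $A\p\ten B\p\xto{a\p\ten b\p}L(A\o)\ten L(B\o)\xto{m}L(A\o\ten B\o)$, where $m$ (together with the ``contraction'' maps $L(A\o)\ten K(A\o\ten B\o)\to K(B\o)$ and $L(B\o)\ten K(A\o\ten B\o)\to K(A\o)$) is read off from $\Lambda$ applied to the universal morphism $(A\o,B\o)\to A\o\ten B\o$, exactly as in \cref{eg:contraction}. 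The negative component $(A\ten B)\m$ is a pullback in $\cE$, pinned down by its universal property: a map $X\to(A\ten B)\m$ should be the same as a plain-Chu pairing datum --- a pair $X\to A\p\hom B\m$ and $X\to B\p\hom A\m$ agreeing in $(A\p\ten B\p)\hom\bbot_1$ --- together with a map $X\to K(A\o\ten B\o)$ that is compatible over $\bbot_2$ via $\lambda$ and whose composites with the contraction maps and with $a\p,b\p$ recover the pairing datum. Such a limit exists because $\cE$ has pullbacks. The comparison $(A\ten B)\m\to K(A\o\ten B\o)$ is the evident projection.

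It then remains to verify $\gl(\Lambda,\psi)(\Gamma,A,B;\Delta)\cong\gl(\Lambda,\psi)(\Gamma,A\ten B;\Delta)$: one expands a morphism on either side into its $\cD$-component, its $\chu(\cE,\bbot_1)$-component, and the commuting squares \eqref{eq:glsq1}--\eqref{eq:glsq3}; applies the tensor universal properties in $\cD$ and in $\chu(\cE,\bbot_1)$ to the first two; and checks that the square data corresponds, which is exactly what the universal property chosen for $(A\ten B)\m$ was designed to force. The main obstacle, and essentially the only real content, is this bookkeeping: assembling the pullback diagram for $(A\ten B)\m$ (and dually $(A\coten B)\p$) so that it simultaneously records the Chu pairing over $\bbot_1$, the gluing constraint over $\bbot_2$, and the $\Lambda$-image of the universal cone, and then matching the resulting hom-isomorphism against the morphism description of $\gl(\Lambda,\psi)$; once the diagram is correct, the remaining verifications are routine diagram chases. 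Finally, representability of a polycategory is precisely the existence of all these operations, so the ``in particular'' clause follows at once: when $\cD$ is $\ast$-autonomous every operation exists in $\gl(\Lambda,\psi)$, hence it too is $\ast$-autonomous.
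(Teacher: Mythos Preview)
Your proposal is correct and follows essentially the same route as the paper: both build the operations in $\gl(\Lambda,\psi)$ by taking the operation in $\cD$ in the first slot and a Chu-style limit in $\cE$, threaded through $\Lambda$, in the second, and the paper's explicit limit diagram for $(A\ten B)\m$ is exactly what your universal-property description unpacks to.

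One small correction: the unit is not as free as you suggest.  You cannot simply place the unit $(\unit,\bot_1,\ell)$ of $\chu(\cE,\bbot_1)$ in the Chu slot, since there is no given map $\bot_1\to K(\unit)$ to serve as the comparison; instead $\unit\m$ must be taken to be the pullback of $\bot_1\to\bot_2\leftarrow K(\unit)$ (the right-hand map being the transpose of $\lambda_{\unit}$), which is just the degenerate $n=0$ instance of the same limit you correctly describe for $(A\ten B)\m$.  The counit is dual.
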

\begin{proof}
  When $\Bbbk_1$ and $\Bbbk_2$ are terminal, we use the formulas from~\cite[\S4.2]{hs:glue-orth-ll}.
  When they are representable, we modify the formulas slightly; for tensor products we have
  \[ (A\ten B)\p = A\p \ten B\p \to L(A\o) \ten L(B\o) \to L(A\o\ten B\o) \]
  and the limit of the following diagram (drawn in the middle):
  \[\small
    \begin{tikzcd}[column sep={2.6cm,between origins}]
      && (A\p \mathord{\ten} B\p) \mathord{\hom} \ik_1 & &\\
      & A\p \mathord{\hom} B\m \ar[dl] \ar[ur] & (A\mathord{\ten} B)\m \ar[l,dashed] \ar[r,dashed] \ar[d,dashed] & B\p \mathord{\hom} A\m \ar[dr] \ar[ul]\\
      A\p \mathord{\hom} K(B\o) & L(A\o) \mathord{\hom} K(B\o) \ar[l] & K(A\o\mathord{\ten} B\o) \ar[l] \ar[r] & L(B\o) \mathord{\hom} K(A\o) \ar[r] & B\p \mathord{\hom} K(A\o).
    \end{tikzcd}
  \]
  The unit consists of $\unit\p = \unit \to L(\unit)$ and the pullback
  \[
    \begin{tikzcd}
      \unit\m \ar[rrr] \ar[d] \drpullback & & & \ik_1 \ar[d] \\
      K(\unit) \ar[r,"\cong"'] & \unit\ten K(\unit) \ar[r] & L(\unit) \ten K(\unit) \ar[r,"\lambda"'] & \ik_2\mathrlap.
    \end{tikzcd}
  \]
  The dual of $(A\o,A\p,A\m,\uA)$ is $(\d {(A\o)}, A\m, A\p, \uA\sigma)$, and for the internal-hom we have
  \[\small
    \begin{tikzcd}[column sep={2.6cm,between origins}]
      && (A\p \mathord{\ten} B\m) \mathord{\hom} \ik_1 & &\\
      & A\p \mathord{\hom} B\p \ar[dl] \ar[ur] & (A\mathord{\hom} B)\p \ar[l,dashed] \ar[r,dashed] \ar[d,dashed] & B\m \mathord{\hom} A\m \ar[dr] \ar[ul]\\
      A\p \mathord{\hom} L(B\o) & L(A\o) \mathord{\hom} L(B\o) \ar[l] & L(A\o\mathord{\hom} B\o) \ar[l] \ar[r] & K(B\o) \mathord{\hom} K(A\o) \ar[r] & B\m \mathord{\hom} K(A\o).
    \end{tikzcd}
  \]
  and
  \[ (A\hom B)\m = A\p \ten B\m \to L(A\o) \ten K(B\o) \to K(A\o\hom B\o). \]
  We leave cotensor products to the reader.
\end{proof}

\begin{eg}
  Double gluing is usually described only when $\Bbbk_1=\Bbbk_2=1$, but our more general version appears implicitly in at least one place.
  Specifically, consider \cref{eg:contraction} with $L = \cC(\unit,-)$ and $K = \cC(-,J)$ for some $J$, as in~\cite[\S4.2.2]{hs:glue-orth-ll}.
  Then every object $\Lambda(A) = (LA,KA)$ comes with a pairing $LA\times KA = \cC(\unit,A) \times \cC(A,J) \to \cC(\unit,J)$, which is respected by every morphism in the image of $\Lambda$; thus the codomain of $\Lambda$ lifts to $\chu(\fSet,\ik_2)$ where $\ik_2=\cC(\unit,J)$.
  Now for any $F\subseteq \cC(\unit,J)$ we can take $\psi : \ik_1 = F \into \cC(\unit,J) = \ik_2$, and the resulting $\gl(\Lambda,\psi)$ coincides (modulo a restriction to monomorphisms) with the \emph{slack orthogonality category} of the \emph{focused orthogonality} on $\gl(\Lambda,1)$ determined by $F$ as in~\cite[\S5]{hs:glue-orth-ll}.
\end{eg}

\section{2-Conservativity}
\label{sec:2cons}

Let \cP be a polycategory with \cJ as in \cref{sec:sheaves}, and let $\Phi:\cP\to\cD$ be its universal functor to a \mbox{$\ast$-auto}\-nomous category.
That is, $\Phi$ is a polycategory functor preserving the tensor and cotensor products in \cJ (up to isomorphism), and such that any polycategory functor $\cP \to \cQ$ that preserves \cJ and where \cQ is $\ast$-autonomous factors through $\Phi$, uniquely up to unique isomorphism.
This ``up to isomorphism'' version is categorically ``correct'', and seems necessary since the functor $\Xi$ below does not preserve \cJ strictly.
% The existence of such $\Phi$ follows from 2-categorical algebra~\cite{bkp:2dmonads}.

\begin{thm}\label{thm:2cons}
  The universal $\Phi:\cP\to\cD$ from $(\cP,\cJ)$ to a \mbox{$\ast$-auto}\-nomous category is fully faithful.
\end{thm}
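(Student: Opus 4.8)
Since the reasoning behind \cref{thm:main2} already makes $\Phi$ faithful (the faithful embedding $\cP\hookrightarrow\envpj$ of \cref{thm:shenv} factors through $\Phi$ by universality), the task is to prove $\Phi$ is \emph{full}, and the plan is to run Lafont's gluing argument in the polycategorical $\ast$-autonomous setting. First I would produce the comparison functor: by \cref{thm:shenv} the envelope $\envpj=\chu(\modpj,\cP)$ is $\ast$-autonomous, and the Yoneda-type embedding $y\colon\cP\to\envpj$ of \cref{thm:yoneda-poly} preserves $\cJ$, so by the universal property of $\Phi$ it factors, up to isomorphism, as $\cP\xrightarrow{\Phi}\cD\xrightarrow{\Xi}\envpj$; note $\Xi\Phi\cong y$ is fully faithful. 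Writing $\Xi(A)=(L(A),K(A),\lambda_A)$, the functor $\Xi$ is exactly a datum of the kind fed to the double gluing construction of \cref{sec:double-gluing}, with $\cE=\modpj$ and $\bbot_2=\cP$.

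Next I would form a double gluing $\cG\coloneqq\gl(\Xi,\psi)$ for a carefully chosen inclusion of presheaves $\psi\colon\bbot_1\hookrightarrow\bbot_2$ on $\modpj$ --- the ``enhanced'' double gluing alluded to in the introduction --- arranged so that \cref{thm:double-gluing} applies, whence $\cG$ is $\ast$-autonomous, and so that the representable data $(\yon_A,\noy{A},\gamma_A)$, decorated by the canonical isomorphisms $\yon_A\cong L(\Phi A)$ and $\noy{A}\cong K(\Phi A)$ coming from $\Xi\Phi\cong y$, assemble into an object $j(A)$ of $\cG$ (with $\gamma_A$ factoring through $\psi$). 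This yields a functor $j\colon\cP\to\cG$ with $\pi\circ j\cong\Phi$, where $\pi\colon\cG\to\cD$ is the (strict $\ast$-autonomous) projection; moreover $j$ preserves $\cJ$, since by \cref{thm:double-gluing} the distinguished tensors and cotensors of $\cG$ are built from those of $\cD$ and the corresponding Chu-theoretic formulas, which $\Phi$ and $y$ respect. Hence the universal property of $\Phi$ factors $j$ as $\cP\xrightarrow{\Phi}\cD\xrightarrow{\hat\jmath}\cG$ with $\hat\jmath$ $\ast$-autonomous; and since $\pi\hat\jmath$ and $\Id_\cD$ are both $\ast$-autonomous functors $\cD\to\cD$ whose composite with $\Phi$ is isomorphic to $\Phi$, uniqueness in the universal property gives $\pi\hat\jmath\cong\Id_\cD$.

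The crux is then the \emph{main lemma}: every morphism of $\cG$ between objects in the image of $j$ is $j$ of a (unique) morphism of $\cP$. I would prove this by unwinding the defining squares \eqref{eq:glsq1}--\eqref{eq:glsq3}: once the decorating isomorphisms are absorbed, the $\chu(\modpj,\bbot_1)$-component of such a morphism is a Chu-type morphism between the $j$-images of the $A_i$ and the $B_j$, built entirely from the representable modules $\yon_{A_i},\noy{A_i},\yon_{B_j},\noy{B_j}$, and by the polycategorical Yoneda lemma (\cref{thm:yoneda}) together with \cref{thm:yoneda-poly} this is precisely the data of a morphism $f_0\colon(A_i)\to(B_j)$ in $\cP$; the remaining squares, relating this component to $\Xi$ of the $\cD$-component through $\psi$, then pin the $\cD$-component down to $\Phi(f_0)$. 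Granting the main lemma, fullness is immediate: given $g\colon(\Phi A_i)\to(\Phi B_j)$ in $\cD$, the morphism $\hat\jmath(g)\colon j(A_i)\to j(B_j)$ equals $j(f_0)$ for some $f_0$ in $\cP$, so $g$ is $\pi$ of $j(f_0)$, i.e.\ $g=\Phi(f_0)$ up to the usual isomorphism bookkeeping; combined with faithfulness, $\Phi$ is fully faithful.

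I expect the main obstacle to be precisely the construction and verification of this enhanced double gluing: choosing $\psi\colon\bbot_1\hookrightarrow\bbot_2$ so that simultaneously \cref{thm:double-gluing} applies, the Yoneda data lifts, and --- most delicately --- the gluing squares in the main lemma genuinely recover the $\cD$-component of a morphism between $j$-objects from its $\cP$-component rather than leaving it undetermined. This is exactly the point where a naive double gluing along an ordinary presheaf Yoneda embedding is insufficient (cf.\ the footnote in the introduction and \cref{rmk:mistake}), and it is the reason the polycategorical framework, with its module-theoretic Yoneda lemma, is doing essential work. The remaining ingredients --- factoring $y$ through $\Phi$, checking that $j$ preserves $\cJ$, and the formal bookkeeping around $\pi\hat\jmath\cong\Id_\cD$ --- are routine given the machinery already in place.
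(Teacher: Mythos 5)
Your outer scaffolding (factor, glue, lift, invoke the universal property for a section, read off a $\cP$-witness) is the same as the paper's, and your remarks about faithfulness and the general shape of Lafont's technique are fine. But there is a genuine gap at exactly the point you yourself flag as ``the main obstacle'': the choice of gluing data, which you leave open and which your proposed setup cannot supply. You propose to glue along the functor $\Xi\colon\cD\to\envpj$ obtained by factoring the Yoneda embedding $y$ through $\Phi$ via the universal property, with $\bbot_2=\cP$ and some unspecified subpresheaf inclusion $\psi\colon\bbot_1\hookrightarrow\cP$. The only thing you know about that $\Xi$ is the abstract isomorphism $\Xi\Phi\cong y$. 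Consequently, in a gluing morphism between your objects $j(A_i)$ and $j(B_j)$, the squares \eqref{eq:glsq1}--\eqref{eq:glsq3} tie the $\cP$-witness $f_0$ (extracted from the $\chu(\modpj,\bbot_1)$-component via \cref{thm:yoneda-poly}) to the $\cD$-component $g$ only \emph{through} $\Xi$: they say $\Xi(g)\cong y(f_0)\cong\Xi(\Phi f_0)$, which does not ``pin the $\cD$-component down to $\Phi(f_0)$'' unless $\Xi$ is faithful on those hom-sets --- and that is equivalent to the fullness you are trying to prove. Concretely, with the natural choice $\bbot_1=\bbot_2=\cP$, $\psi=\id$, and isomorphisms as decorating maps, each $g$ lifts \emph{uniquely} to a gluing morphism between $j$-images (paired with the unique $f_0$ with $y(f_0)\cong\Xi(g)$), so the projection to $\cD$ is bijective on these hom-sets, your ``main lemma'' becomes a literal restatement of fullness, and the gluing and the section $\hat\jmath$ yield no information; taking a proper subpresheaf $\bbot_1$ does not change this, since the constraints still see $g$ only through $\Xi$.

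The missing idea --- the paper's actual construction --- is to glue along a \emph{concrete} functor built from the hom-sets of $\cD$ rather than the abstract factorization: $\Lambda(R)=(\cD[\,;R][\Phi],\,\cD[R;\,][\Phi],\,\gamma_{\Phi R})$, i.e.\ the Yoneda embedding of $\cD$ into its own envelope followed by restriction of modules along $\Phi$, landing in $\chu(\modpj,\cD[\Phi])$; and to take for $\psi$ the map $\phi\colon\cP\to\cD[\Phi]$ induced by $\Phi$ --- note this goes \emph{out of} $\cP$ into the restricted $\cD$-homs, not into $\cP$ as your $\bbot_1\hookrightarrow\cP$ would have it. With this data the lift $\cP\to\gl(\Lambda,\phi)$ decorates $(\yon_A,\noy{A},\gamma_A)$ with $\Phi$'s action on hom-sets $\yon_A\to\yon_{\Phi A}[\Phi]$ and $\noy{A}\to\noy{\Phi A}[\Phi]$ (not isomorphisms), so the square of type \eqref{eq:glsq3} attached to the section $\Xi'\colon\cD\to\gl(\Lambda,\phi)$ compares $\cP(\Gamma;\Delta)\xto{\ \Phi\ }\cD(\Phi\Gamma;\Phi\Delta)$ against two Yoneda isomorphisms, and 2-out-of-6 (equivalently, your main lemma, now true) closes the argument. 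Two further consequences of this choice that your plan underestimates: the verification that the lift preserves $\cJ$ is a genuine limit/pullback computation (the paper's displayed diagrams), precisely because the decorating maps are not isomorphisms; and it is this concreteness of $\Lambda$ and $\phi$, not merely the existence of the contravariant $\noy{}$, that distinguishes the working argument from the failed one discussed in \cref{rmk:mistake}.
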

\begin{proof}
  Following Lafont's technique, we will construct the (double) gluing of $\cD$ along the restricted (polycategorical) Yoneda embedding of $\Phi$, and then lift $\Phi$ to a functor $\Xi$ landing in this gluing category that preserves \cJ.
  By the universal property of $\cD$, it will follow that this gluing category has a section, a ``logical relations'' functor that assigns in particular to each morphism in \cD a unique morphism in \cP that maps onto it, showing full-faithfulness of $\Phi$.

  % By \cref{thm:shenv}, there is such a functor $\cP \to \cQ = \envpj$ that is fully faithful; thus as in \cref{thm:faith-cons} we deduce that $\Phi$ is faithful and conservative.
  % For fullness we use Lafont's technique, as follows.
  % Up to equivalence, we may assume that $\cD$ is a $\ast$-polycategory (i.e.\ it has chosen duals that are strictly involutive).
  For any \cD-module \cU, we write $\cU[\Phi]$ for the \cP-module with $\cU[\Phi](\Gamma;\Delta) = \cU(\Phi(\Gamma);\Phi(\Delta))$.
  This defines a functor $(-)[\Phi]:\mod\cD \to \modp$; and if \cU respects the tensor and cotensor products of \cD, then $\cU[\Phi]$ is a $(\cP,\cJ)$-module since $\Phi$ preserves \cJ.
  Now we let $\Lambda$ be the composite polycategory functor
  \[ \cD \xto{(\yon,\scriptnoy{},\gm)} \env\cD = \chu(\mod\cD,\cD) \xto{(-)[\Phi]} \chu(\modp,\cD[\Phi]), \]
  where $(\mod\cD,\cD) \to (\modp,\cD[\Phi])$ is induced by $\Phi$.
  Thus on objects we have
  \begin{equation}
    \Lambda(R) = (L(R),K(R),\lambda_R) = (\cD[\,;R][\Phi], \cD[R;\,][\Phi], \gamma_{\Phi R}).\label{eq:fcLK}
  \end{equation}
  Note that $\Lambda$ lands in $\chu(\modpj,\cD[\Phi])$.
  Now $\Phi$ also induces a map $\phi : \cP \to \cD[\Phi]$ in \modpj.
  By \cref{thm:double-gluing},
  % Since \cD is $\ast$-autonomous and $\modpj$ is representable and closed with pullbacks,
  the double gluing category
  $\gl(\Lambda,\phi)$ is $\ast$-autonomous.
  
  By % definition of $\gl(\Lambda,\phi)$ and
  the universal property of comma objects, to define a functor $\Xi : \cP \to \gl(\Lambda,\phi)$ it suffices to give the following diagram in poly double categories (most of which are vertically discrete):
  \[
    \begin{tikzcd}
      \cP \ar[r,"{(\yon,\scriptnoy{},\gm)}"] \ar[d,"\Phi"'] \ar[drr,phantom,near end,"\Downarrow"] & \envpj \ar[r,equals] & \chu(\modpj,\cP) \ar[d,"\phi"] \\
      \cD \ar[rr,"\Lambda"'] && \dChu(\modpj,\cD[\Phi])
    \end{tikzcd}
  \]
  The necessary 2-cell has components
  \( \yon_A \to \yon_{\Phi A}[\Phi] \) and \( \noy{A} \to \noy{\Phi A}[\Phi] \), which
  we take to be the action of $\Phi$ on hom-sets.
  Thus $\Xi(A)$ consists of $\Phi A\in \cD$, $(\yon_A, \noy A, \gamma_A) \in \chu(\modpj,\cP)$, and the maps
  \begin{align}
    \yon_A &= \cP[\,; A] \to \cD[\Phi][\,;A] = \cD[\,;\Phi A][\Phi] = \yon_{\Phi A}[\Phi] \label{eq:theta1}\\
    \noy A &= \cP[A;\,] \to \cD[\Phi][A;\,] = \cD[\Phi A;\,][\Phi] = \noy{\Phi A}[\Phi] \label{eq:theta2}
  \end{align}
  We claim that $\Xi$ preserves the tensor and cotensor products in \cJ.
  For the tensors, we use that $\Phi$ and $\yon$ preserve them, and also need to calculate the limit
  \[\small
    \begin{tikzcd}[column sep={2.6cm,between origins}]
      && (\yon_A \mathord{\ten} \yon_B) \mathord{\hom} \cP & &\\
      & \yon_A \mathord{\hom} \noy{B} \ar[dl] \ar[ur] & \bullet \ar[l,dashed] \ar[r,dashed] \ar[d,dashed] & \yon_B \mathord{\hom} \noy{A} \ar[dr] \ar[ul]\\
      \yon_A \mathord{\hom} K(\Phi B) &
      % L(\Phi A) \mathord{\hom} K(\Phi B) \ar[l]
      & K(\Phi A\mathord{\ten} \Phi B) \ar[ll] \ar[rr] &
      % L(\Phi B) \mathord{\hom} K(\Phi A) \ar[r]
      & \yon_B \mathord{\hom} K(\Phi A).
    \end{tikzcd}
  \]
  Using~\eqref{eq:fcLK} and~\eqref{eq:yoneda3}, this becomes
  \[\small
    \begin{tikzcd}[column sep={2.6cm,between origins}]
      && \cP[A,B;] & &\\
      & \cP[A,B;] \ar[dl] \ar[ur] & \bullet \ar[l,dashed] \ar[r,dashed] \ar[d,dashed] & \cP[A,B;] \ar[dr] \ar[ul]\\
      \cD[\Phi B;][\Phi][A;] \ar[d,"\cong"] &
      % \cD[;\Phi A][\Phi] \mathord{\hom} \cD[\Phi B;][\Phi] \ar[l]
      & \cD[\Phi A\mathord{\ten} \Phi B;][\Phi] \ar[ll] \ar[rr] \ar[d,"\cong"] &
      % \cD[;\Phi B][\Phi] \mathord{\hom} \cD[\Phi A;][\Phi] \ar[r]
      & \cD[\Phi A;][\Phi][B;] \ar[d,"\cong"] \\
      \cD[\Phi][A,B;] && \cD[\Phi][A,B;] \ar[ll] \ar[rr] && \cD[\Phi][A,B;]
    \end{tikzcd}
  \]
  whose limit is $\cP[A,B;]\cong \noy{A\ten B}$.
  For a unit, we instead consider the pullback
  \[
    \begin{tikzcd}
      \bullet \ar[r] \ar[d] \drpullback & \cP \ar[d] \\
      \cD[\Phi\unit;][\Phi] \ar[r] % \ar[r,"\cong"'] &
      % \unit\ten \cD[\Phi\unit;][\Phi]  \ar[r] &
      %\cD[;\Phi\unit][\Phi] \ten \cD[\Phi\unit;][\Phi] \ar[r,"\lambda"']
      & \cD[\Phi].
    \end{tikzcd}
  \]
  But the bottom map is an isomorphism, hence the pullback is isomorphic to $\cP$, which is isomorphic to $\cP[\unit;] = \noy{\unit}$.
  Cotensors are dual.

  Now, since the composite $\cP \xto{\Xi} \gl(\Lambda,\cP) \to \cD$ is equal to $\Phi$, we can extend $\Xi$ by the universal property of $\cD$ to a functor $\Xi':\cD \to \gl(\Lambda,\cP)$.
  \begin{equation*}
    \begin{tikzcd}
      && \gl(\Lambda,\cP) \ar[d] \\
      \cP \ar[r,"\Phi"'] \ar[urr,"\Xi",bend left=5] &
      \cD \ar[ur,dashed,"\Xi'"'] \ar[r,equals] & \cD.
    \end{tikzcd}%\label{eq:ext-sect}
  \end{equation*}
  By the universal property of $\gl(\Lambda,\cP)$, this means we have a diagram
  \[
    \begin{tikzcd}
      \cP \ar[d,"\Phi"'] \ar[r,"{(\yon,\scriptnoy{},\gm)}"] \ar[drr,phantom,near end,"\Downarrow\scriptstyle\mu"] & \envpj \ar[r,equals] &
      \chu(\modpj,\cP) \ar[d,"\phi"] \\
      \cD \ar[urr,bend left=5,dashed,"\Xi'",near start] \ar[rr,"\Lambda"'] && \dChu(\modpj,\cD[\Phi])
    \end{tikzcd}
  \]
  where $\mu\Phi$ equals~\eqref{eq:theta1}--\eqref{eq:theta2}.
  This determines the components of $\mu$ on objects $\Phi A$; its naturality on all morphisms in \cD between such objects then entails multiple commuting squares like~\eqref{eq:glsq1}--\eqref{eq:glsq3}.
  Those like~\eqref{eq:glsq3} imply that the quadrilateral involving $\Xi'$ in the following diagram commutes:
  \[\small\hspace{-2cm}
    \begin{tikzcd}[column sep={3.5cm,between origins}]
      \cP(A_1,\dots,A_m;B_1,\dots,B_n) \ar[d,"\Phi"'] \ar[rr,"\cong"] &&
      % \envpj(\gm_\Gamma;\gm_\Delta) \ar[r] &
      \modpj(\yon_{A_1},\dots,\yon_{A_m},\noy{B_1},\dots,\noy{B_n},\dots;\cP) \ar[d] \\
      \cD(\Phi A_1,\dots,\Phi A_m;\Phi B_1,\dots,\Phi B_n) \ar[dr] \ar[urr,dashed,"\Xi'"] \ar[rr,dotted,"\cong"'] &&
      \modpj(\yon_{A_1},\dots,\yon_{A_m},\noy{B_1},\dots,\noy{B_n};\cD[\Phi]) \\
      &
      % \dChu(\modpj,\cD[\Phi])(\Lambda\Phi(\Gamma);\Lambda\Phi(\Delta)) \ar[r] &
      \modpj(\cD[\Phi][;A_1],\dots,\cD[\Phi][;A_m],\cD[\Phi][B_1;],\dots,\cD[\Phi][B_n;];\cD[\Phi]) \ar[ur]
    \end{tikzcd}\hspace{-2cm}
  \]
  Now the two horizontal arrows are isomorphisms by \cref{thm:yoneda}.
  Thus, by the 2-out-of-6 property for isomorphisms, the left-hand vertical map $\Phi$
  % \[ \cP(A_1,\dots,A_m;B_1,\dots,B_n) \xto{\Phi} \cD(\Phi A_1,\dots,\Phi A_m;\Phi B_1,\dots,\Phi B_n) \]
  is also an isomorphism; i.e.\ $\Phi$ is fully faithful.
\end{proof}

\begin{rmk}
  Most of the proof would work using the traditional $\gl(\Lambda,1)$ instead of our $\gl(\Lambda,\phi)$.
  When showing that $\Xi$ preserves \cJ, we would omit $(\yon_A \ten \yon_B) \hom \cP$ from the diagram, changing the limit to the kernel-pair of $\cP[A,B;] \to \cD[\Phi][A,B;]$; but since \cref{thm:main2} implies $\Phi$ is faithful, this morphism is monic, so its kernel pair is just its domain.
  Also, the squares like~\eqref{eq:glsq3} would carry no information; but we could use~\eqref{eq:glsq1}--\eqref{eq:glsq2} instead as long as \cJ includes a unit or counit, so that we can ignore the $m=n=0$ case.
  However, our proof seems cleaner and easier to to generalize.
\end{rmk}

\begin{cor}\label{thm:2cons-csmc}
  Let \cC be a closed symmetric monoidal category and $\Phi:\cC\to\cD$ its universal functor to a $\ast$-autonomous one.
  Then $\Phi$ is fully faithful.
\end{cor}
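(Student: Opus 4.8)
The plan is to obtain this as a special case of \cref{thm:2cons}. Take $\cP=\fsp\cC$, the $\ast$-polycategory of \cref{thm:smcc-poly}, and let $\cJ$ be the distinguished family described in \cref{sec:sheaves} and used in the proof of \cref{thm:main}: the tensor products and unit coming from the monoidal structure of $\cC$, together with the cotensor products $(\Abar\coten B)=(A\hom B)$. Then \cref{thm:2cons} already tells us that the universal functor $\Phi':\fsp\cC\to\cD'$ from $(\fsp\cC,\cJ)$ to a $\ast$-autonomous category is fully faithful, and by \cref{thm:smcc-poly} the embedding $\cC\to\Umulti(\fsp\cC)$ is fully faithful as well. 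So the only real work is to identify $\cD'$ with the $\ast$-autonomous category $\cD$ freely generated by $\cC$ \emph{qua} closed symmetric monoidal category, and to identify $\Phi$ with the composite $\cC\hookrightarrow\Umulti(\fsp\cC)\xrightarrow{\Phi'}\cD'$.

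To do this I would set up a $2$-equivalence between two categories of ``algebras'': on one side, strong symmetric monoidal closed functors $\cC\to\cE$ into a $\ast$-autonomous category $\cE$, with monoidal natural isomorphisms as $2$-cells; on the other, polycategory functors $\fsp\cC\to\cE$ that preserve $\cJ$ up to isomorphism. The comparison uses the universal property of $\fsp\cC$ recalled in \cref{sec:starpoly} — it is the value at the co-unary polycategory $\cC$ of the left adjoint to the forgetful functor from $\ast$-polycategories to multicategories — which identifies (polycategory = $\ast$-polycategory) functors $\fsp\cC\to\cE$ with multicategory functors $\Umulti(\cC)\to\Umulti(\cE)$. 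Under this identification one checks that preservation of the tensors and unit in $\cJ$ is precisely the condition that the corresponding multicategory functor be strong symmetric monoidal, while preservation of the cotensors $(\Abar\coten B)$ is precisely preservation of the internal-homs $A\hom B$ — here using that in any $\ast$-autonomous category $X\hom Y\cong\d X\coten Y$ and that every polycategory functor automatically preserves duals. With this equivalence in hand, $\cD'$ equipped with $\cC\hookrightarrow\Umulti(\fsp\cC)\to\cD'$ has exactly the universal property defining $\cD$ and $\Phi$, so the two agree up to equivalence, and $\Phi$ is (the underlying multicategory functor of) the composite of $\cC\to\Umulti(\fsp\cC)$ with $\Umulti(\Phi')$. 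Since both of these are fully faithful, so is $\Phi$.

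The main obstacle is the bookkeeping in the second paragraph: stating the $2$-equivalence of algebra categories carefully enough — tracking the ``up to isomorphism'' and essential-uniqueness clauses in both notions of universal functor, as furnished by the $2$-categorical algebra of~\cite{bkp:2dmonads} — so that one may legitimately transport the universal property from $(\fsp\cC,\cJ)$ back to $\cC$. Everything genuinely mathematical has already been done, in \cref{sec:starpoly} and \cref{thm:2cons}; this corollary is just the translation of the latter through \cref{thm:smcc-poly}.
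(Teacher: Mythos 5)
Your proposal is correct and follows the same route as the paper: the paper's own proof consists precisely of observing that closed symmetric monoidal functors from \cC to $\ast$-autonomous categories factor essentially uniquely through $(\fsp\cC,\cJ)$, so that $\Phi':\fsp\cC\to\cD$ is the universal functor from $(\fsp\cC,\cJ)$, and then applying \cref{thm:2cons}. The only difference is that you spell out the transfer of universal properties (via the adjunction defining $\fsp\cC$ and the identification of \cJ-preservation with strong monoidal closed structure) which the paper compresses into a single sentence.
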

\begin{proof}
  A closed symmetric monoidal functor from \cC to a $\ast$-autonomous category factors essentially uniquely through $(\fsp\cC,\cJ)$.
  Thus $\Phi':\fsp\cC\to\cD$ is the universal functor from $(\fsp\cC,\cJ)$ to a $\ast$-autonomous category; now apply \cref{thm:2cons}.
\end{proof}

To emphasize how this proof deals with the internal-homs, we check explicitly that the induced $\Xi:\cC\to \gl(\Lambda,\phi)$ preserves them.
We must inspect the limit
\[\small
  \begin{tikzcd}[column sep={2.6cm,between origins}]
    && (\yon_A \mathord{\ten} \noy{B}) \mathord{\hom} \fsp\cC & &\\
    & \yon_A \mathord{\hom} \yon_B \ar[dl] \ar[ur] & \bullet \ar[l,dashed] \ar[r,dashed] \ar[d,dashed] & \noy{B} \mathord{\hom} \noy{A} \ar[dr] \ar[ul]\\
    \yon_A \mathord{\hom} \cD[;B][\Phi] &
    % \cD[;A][\Phi] \mathord{\hom} \cD[;B][\Phi] \ar[l]
    & \cD[;A\mathord{\hom} B][\Phi] \ar[ll] \ar[rr] &
    % \cD[B;][\Phi] \mathord{\hom} \cD[A;][\Phi] \ar[r]
    & \noy{B} \mathord{\hom} \cD[A;][\Phi].
  \end{tikzcd}
\]
which by~\eqref{eq:fcLK} and~\eqref{eq:yoneda3} becomes
\[\small
  \begin{tikzcd}[column sep={2.6cm,between origins}]
    && \fsp\cC[A;B] & &\\
    & \fsp\cC[A;B] \ar[dl] \ar[ur] & \bullet \ar[l,dashed] \ar[r,dashed] \ar[d,dashed] & \fsp\cC[A;B] \ar[dr] \ar[ul]\\
    \cD[;\Phi B][\Phi][A;] \ar[d,"\cong"] &
    % \cD[;A][\Phi] \mathord{\hom} \cD[;B][\Phi] \ar[l]
    & \cD[;\Phi(A\mathord{\hom} B)][\Phi] \ar[ll] \ar[rr]\ar[d,"\cong"] &
    % \cD[B;][\Phi] \mathord{\hom} \cD[A;][\Phi] \ar[r]
    & \cD[\Phi A;][\Phi][;B]\ar[d,"\cong"]\\
    \cD[\Phi][A;B] && \cD[\Phi][A;B] \ar[ll] \ar[rr] && \cD[\Phi][A;B],
  \end{tikzcd}
\]
whose limit is $\fsp\cC[A;B] \cong \fsp\cC[;A\mathord{\hom} B] = \yon_{A\hom B}$.
We also need to check that
\[ \yon_A \ten \noy{B} \to \cD[;\Phi A \hom \Phi B][\Phi]
\quad\text{is isomorphic to}\quad
\noy{A\hom B} \to \cD[;\Phi(A\hom B)][\Phi]. \]
But by \cref{thm:sh-emb}, we have
\( \noy{A\hom B} = \noy{\Abar \coten B} \cong \noy{\Abar} \ten \noy{B} \cong \yon_A \ten \noy{B}. \)
%and the codomains and morphisms also coincide.
This is the crucial point: the polycategorical dual Yoneda embedding $\noy{}$ maps internal-homs to tensor products.

\begin{rmk}\label{rmk:mistake}
  It is claimed in~\cite{hs:glue-orth-ll,hasegawa:glueing-cll} that \cref{thm:2cons-csmc} can be proven using double gluing into an ordinary presheaf category, but it seems that this does not work.
  An ordinary Yoneda embedding has no dual like $\noy{}$, so the ``dual parts'' of $\Xi$ have to be chosen ``tautologically''; but then $\Xi$ fails to preserve the internal-homs.
  In the notation of~\cite[\S4.5]{hasegawa:glueing-cll}, $(\dP_A \hom \dP_B)_t(X)$ is the set of morphisms $\dI X\to \dI A \ten (\dI B)^\bot$ in $\dC_1$ that factor as $(\dI f\ten g) \circ \dI h$ for some $h\in \dC_0(X,Y\ten Z)$, $f\in \dC_0(Y,A)$, and $g\in \dC_1(\dI Z, (\dI B)^\bot)$, whereas $(\dP_{A\hom B})_t(X)$ is the set of \emph{all} morphisms in $\dC_1(\dI X, \dI A \ten (\dI B)^\bot)$.
  There seems no reason why every such morphism should factor in that way.
\end{rmk}

\cref{thm:2cons} also specializes to other polycategorical structures.
For instance, we have the following result (shown in~\cite{bcst:natded-coh-wkdistrib} by cut-elimination).

\begin{cor}
  The universal functor from any linearly distributive category to a $\ast$-autonomous one is fully faithful.
\end{cor}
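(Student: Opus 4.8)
The plan is to recognize a linearly distributive category as an instance of the data $(\cP,\cJ)$ of \cref{sec:2cons} and then quote \cref{thm:2cons}. Recall that a (symmetric) linearly distributive category is the same thing as a (symmetric) polycategory that is representable with respect to tensor products on the domain side and cotensor products on the codomain side, together with a unit $\unit$ and a counit $\bot$; the linear distributivity morphisms are not extra structure but are produced by these universal properties together with polycategorical composition, exactly as $\ast$-autonomous categories were described as representable $\ast$-polycategories in \cref{sec:starpoly} (see also~\cite{bcst:natded-coh-wkdistrib}). So first I would let $\cP$ be the underlying polycategory of the given linearly distributive category and take $\cJ$ to consist of \emph{all} of its tensor products, cotensor products, unit, and counit --- each of which ``exists'' in $\cP$ precisely because $\cP$ is fully representable in this sense.

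Next I would observe that, under this dictionary, a polycategory functor $\cP\to\cQ$ preserving $\cJ$ (up to isomorphism), with $\cQ$ $\ast$-autonomous, is exactly a strong functor of linearly distributive categories from $\cP$ into the linearly distributive category underlying $\cQ$ (every $\ast$-autonomous category being linearly distributive). Hence the universal polycategory functor $\Phi:\cP\to\cD$ from $(\cP,\cJ)$ to a $\ast$-autonomous category --- which exists by the 2-categorical algebra of~\cite{bkp:2dmonads}, just as in \cref{sec:2cons} --- is precisely the universal linearly distributive functor from $\cP$ to a $\ast$-autonomous category. The ``up to isomorphism'' clause in the universal property is the correct strength here: we want strong, not strict, functors of linearly distributive categories.

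With this identification made, \cref{thm:2cons} applies verbatim to the pair $(\cP,\cJ)$ and tells us that $\Phi$ is fully faithful, which is the assertion. I do not expect a real obstacle; the only thing that needs care is the bookkeeping in the first two steps --- checking that ``representable polycategory'' and ``linearly distributive category'', together with their respective notions of morphism, genuinely match up --- but this is the standard correspondence, and it is precisely the polycategorical viewpoint used throughout the paper. As in \cref{thm:2cons} itself, faithfulness and conservativity of $\Phi$ already follow more cheaply from the embedding theorem (the linearly distributive analogue of \cref{thm:main2} noted in \cref{sec:limits-colim}), so the genuinely new content here is full-faithfulness, and that is exactly what the double-gluing argument behind \cref{thm:2cons} delivers.
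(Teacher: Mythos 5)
Your proposal is correct and follows exactly the paper's route: regard the linearly distributive category as a representable polycategory (the correspondence of~\cite{cs:wkdistrib}), take $\cJ$ to be all its tensor and cotensor products (with unit and counit), and apply \cref{thm:2cons}. The extra bookkeeping you include about matching the notions of morphism is sound but is simply an expansion of the same two-step argument the paper gives.
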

\begin{proof}
  By~\cite{cs:wkdistrib}, a linearly distributive category can be regarded as a representable polycategory.
  Now apply \cref{thm:2cons} with all tensors and cotensors in \cJ.
\end{proof}

We can also include a family \cK of limits and colimits by double gluing with $\modpjk$ instead.
The formulas in~\cite[Proposition 31]{hs:glue-orth-ll} for products and coproducts in double gluing categories still work, as do similar ones for other limits and colimits, and the functor $\Xi$ preserves them.
Of course, as in \cref{sec:limits-colim}, only nonempty limits and colimits in a multicategory \cC induce polycategorical ones in $\fsp\cC$.

\section{Acknowledgments}
\label{sec:acknowledgments}

This material is based on research sponsored by The United States Air Force Research Laboratory under agreement number FA9550-15-1-0053.  The U.S. Government is authorized to reproduce and distribute reprints for Governmental purposes notwithstanding any copyright notation thereon.  The views and conclusions contained herein are those of the author and should not be interpreted as necessarily representing the official policies or endorsements, either expressed or implied, of the United States Air Force Research Laboratory, the U.S. Government, or Carnegie Mellon University.

I would like to thank Max New, Sam Staton, Kirk Sturtz, and Todd Trimble for useful conversations, as well as the referees of the conference version for useful suggestions.

\bibliographystyle{eptcsalpha}
\bibliography{eptcs-references}

\end{document}